\documentclass[11pt, oneside]{article}   	
\usepackage{geometry}                		
\geometry{letterpaper}                   		
\usepackage{graphicx}				

\graphicspath{{graphs/}}
\usepackage{subfigure}
\usepackage{dsfont}					
\usepackage{amssymb}
\usepackage{amsmath}
\usepackage{amsthm}
\usepackage{xcolor}
\usepackage[normalem]{ulem}
\usepackage{hyperref}
\usepackage[textsize=tiny]{todonotes}
\usepackage{mathtools}
\usepackage[ruled]{algorithm2e}
\usepackage{enumitem}
\usepackage{caption}

\newtheorem{theorem}{Theorem}[section]

\newtheorem{lemma}[theorem]{Lemma}
\newtheorem{proposition}{Proposition}[section]
\newtheorem{assumption}{Assumption}[section]

\newtheorem{definition}{Definition}[section]

\newcommand{\dd}{\mathrm{d}}

\newcommand{\diag}{\mathrm{diag}}

\newcommand{\Let}{\triangleq}
\newcommand{\eps}{\varepsilon}
\newcommand{\R}{\mathbb{R}}
\newcommand{\Tr}{\mathrm{Tr}}
\newcommand{\Wass}{\mathbb{W}}

\title{Wasserstein-based Minimax Estimation of Dependence in Multivariate Regularly Varying Extremes}

\definecolor{ssw}{rgb}{0.1,0.45,0.1}

\definecolor{jb}{rgb}{0,0,1}

\definecolor{ymm}{rgb}{1,0.53,0.0}

\SetKwComment{Comment}{/* }{ */}

\author{Xuhui Zhang$^1$, Jose Blanchet$^1$, Youssef Marzouk$^2$, Viet Anh Nguyen$^3$, Sven Wang$^4$}

\date{}	
\date{%
    $^1$Stanford University\\%
    $^2$Massachusetts Institute of Technology\\%
    $^3$Chinese University of Hong Kong\\%
    $^4$Humboldt University Berlin
    \\[2ex]
}	

\begin{document}
\maketitle

\begin{abstract}
We present the first minimax risk bounds for estimators of the spectral measure in multivariate linear factor models, where observations are linear combinations of regularly varying latent factors. Non-asymptotic convergence rates are derived for the multivariate Peak-over-Threshold estimator in terms of the $p$-th order Wasserstein distance, and information-theoretic lower bounds for the minimax risks are established. The convergence rate of the estimator is shown to be minimax optimal under a class of Pareto-type models analogous to the standard class used in the setting of one-dimensional observations known as the Hall-Welsh class. When the estimator is minimax inefficient, a novel two-step estimator is introduced and demonstrated to attain the minimax lower bound.  Our analysis bridges the gaps in understanding trade-offs between estimation bias and variance in multivariate extreme value theory.
\end{abstract}

\section{Introduction}

Multivariate extreme value theory (MEVT) provides a framework for analyzing and understanding the \textit{joint} tail behaviors for observations in datasets comprising multiple variables. This theory finds applications across a diverse array of fields including financial risk management, environmental science, structural engineering, and telecommunications~\cite{ref:heffernan2004conditional,ref:coles1994statistical,ref:joe1994multivariate}. Comprehensive mathematical treatments of this theory are explored in \cite{ref:de2006extreme,ref:resnick2007heavy,ref:resnick2013extreme}, among others. In essence, MEVT involves two key components: the marginal distributions describing the behavior of each variable individually and the dependency structures capturing the relationships and dependencies among variables. 

While there exists an extensive body of literature dedicated to estimating the marginal distributions using univariate approaches~\cite{ref:hill1975simple,ref:pickands1971twodim,ref:pickands1975statistical,ref:david1990models,ref:smith1989extreme}, capturing the dependency structures in the tails continues to be a focal point of current research. Early work on this topic began with the bivariate characterizations by~\cite{ref:Sibuya1960}, while the corresponding multivariate characterizations were later formulated by~\cite{ref:deHaan1977limit,ref:pickands1981multivariate}. Several parametric models have been developed to describe the tail dependence, as in~\cite{ref:HUSLER1989283,ref:tawn1990model,ref:boldi2007mixture,ref:beirlant2004statistics}, among others. 

In this paper, we work within the framework of multivariate regular variation, which is intricately connected to the modeling of tail dependence in the study of extremes. See~\cite{ref:resnick2007heavy} for a comprehensive treatment of multivariate regular variation. 
The tail dependence structure of a multivariate regularly varying random vector is described using an angular or spectral measure on the unit sphere. The models we consider in the present paper follow from the linear factor model constructed in~\cite{ref:cooley2019decomposition}, where the vector of observations are linear combinations of regularly varying latent factors. As shown in~\cite{ref:cooley2019decomposition}, the spectral measure of this linear model is a Dirac mixture distribution. These spectral measures are dense (for example, in the weak topology) in the class of possible spectral measures, allowing approximations to any continuous spectral measure by increasing the number of latent factors. A closely related model to our study is the max-linear model comprising max-linear combinations of independent regularly varying random variables~\cite{ref:FOUGERES2013dense}, also see~\cite{ref:Gissibl2018maxlinear,ref:Gissibl2021identifiability,ref:KLUPPELBERG2021estimating} for its connection to Bayesian networks. 

In both the linear and the max-linear models, the quantity of interest to the statistician is typically the expectation of some function evaluated under the spectral measure. The statistical estimation procedure for this quantity typically employs the multivariate version of the Peak-over-Threshold method, yielding a subset of samples comprising the $k$ largest order statistics. Though much of the literature focuses on constructing subsets by determining the number of order statistics to include, this thresholding strategy is equivalent to choosing samples whose radius exceeds a large $\tau$. The true spectral measure is replaced
with the empirical measure of the angular component of the thresholded samples, under which the expected values are calculated. As the empirical  angular measure follows only approximately the limiting spectral measure, one must assess the impact of the inherent model misspecification. In this regard, ~\cite{ref:cooley2019decomposition} and~\cite{ref:KLUPPELBERG2021estimating} provide theoretical guarantees on the consistency and asymptotic normality of the empirical estimate. The forms of their  guarantees are derived from the central limit theorem  established in~\cite{ref:larsson2012extremal}, which, in turn, is based on the convergence theory of tail empirical processes. While technically elegant, this central limit theorem result also introduces a practical challenge regarding selecting an optimal threshold $\tau$: a higher value of $\tau$ leads to a lower estimation bias. Still, it simultaneously results in a larger estimation variance and vice versa. The implications of these trade-offs are not explicitly delineated in the central limit theorem. Besides, much less is known about the best attainable performance of arbitrary estimators for this problem.

The primary objective of this paper is to derive minimax risk bounds for estimators of the spectral measure arising from the linear factor models constructed in~\cite{ref:cooley2019decomposition}. As far as we know, this is the first paper that develops these minimax bounds in this setting. We consider a family of nonparametric models for the minimax model class $M = \cup_{k=1}^\infty M_k$. This model class has a simple baseline model in which the latent factors are i.i.d.~and follow the Pareto distribution. We consider around this baseline a sub-family of models $M_k$ (indexed by $k$), in which the latent factors can deviate from the baseline, with the deviance (in a precise sense) controlled by the rate $O(k^{-s})$ asymptotically as $k\to\infty$. Intuitively, this type of non-parametric perturbation
relative to the Pareto baseline provides the basis for estimation procedures that
threshold samples, as the bulk of the distributions carry less information. Our construction of the model class is motivated by the classical work~\cite{ref:hall1984best}. However, there are obvious differences. In particular, \cite{ref:hall1984best} considers the direct observation of the regularly varying random variable in a one-dimensional setting and considers the problem of minimax estimation of the regularly varying index (or the so-called extremal exponent). Also see~\cite{ref:drees1998optimal,ref:drees2001minimax,ref:BEIRLANT2006705} for further extensions of this problem. In contrast, we assume the knowledge of the regularly varying index for the latent factors. We are mainly interested in the tail dependence structure of the model dictated by its spectral measure.

In characterizing the convergence rate of the Peak-over-Threshold estimator, the central limit theorem in~\cite{ref:larsson2012extremal} becomes unwieldy due to its asymptotic nature and the lack of a uniform convergence result in the literature. As a technical contribution of our paper, we take an alternative approach employing a statistical distance to quantify the error between the empirical angular measure and the limiting spectral measure. Specifically, we employ the Wasserstein distance and establish an essentially non-asymptotic upper bound for this error measurement. A crucial step in our approach is a coupling argument between the pre-limit and the limiting spectral measures, which extends the arguments in the recent work~\cite{ref:bobbia2021coupling} from univariate extremes to the multivariate extremes setting. Alternatively, one can naturally formulate the problem with the Wasserstein distance being replaced by the information entropy (i.e., the Kullback–Leibler divergence). However, this approach is precluded as the empirical angular measure does not have common support with the limiting spectral measure. For a comprehensive treatment of the optimal transport theory, refer to~\cite{ref:villani2008optimal}. In recent years, there is a surge of interest in applying optimal transport formalism in statistics and machine learning~\cite{ref:panaretos2019statistical}, such as distributionally robust optimization~\cite{ref:blanchet_kang_murthy_2019,ref:gao2023distributionally}, generative models in deep learning~\cite{ref:martin2017wgan}, and domain adaptation~\cite{ref:courty2017optimal}, among others. 

We now summarize our main contributions. Under appropriate assumptions:
\begin{itemize}[leftmargin = 5mm]
    \item We analyze a Peak-over-Threshold estimator, the estimator that underpins previous works including~\cite{ref:cooley2019decomposition,ref:KLUPPELBERG2021estimating}, under the minimax model class. We derive its non-asymptotic convergence rate in terms of the $p$-th order Wasserstein distance $\Wass_p(\cdot,\cdot)$, where $p \geq 1$. 
    \item We derive novel information-theoretic lower bounds (the first in this setting) for the minimax risks corresponding to the chosen model class. We show that the Peak-over-Threshold estimator has a matching convergence rate when the value of $s$, a parameter controlling the discrepancy of the models from the family of Pareto distributions, is smaller than a critical value. In this case, the Peak-over-Threshold estimator is minimax optimal.
    \item When the Peak-over-Threshold estimator is minimax inefficient, we propose a novel two-step estimator based on decomposing the problem into one-dimensional estimating equations. We show that the proposed estimator attains the minimax lower bound under minor additional assumptions on the existence of another estimator for the normalizing constant of the tail of the latent factors.
\end{itemize}

The structure of the present paper is as follows. We review the theory of regular variation and introduce our minimax estimation problem in Section~\ref{sec:setting}. We present our main theoretical results in Section~\ref{sec:mainresults} and present simple
numerical experiments  that empirically validate our main results in Section~\ref{sec:numerics}. 

\section{Notations, Preliminaries and Problem Setup}\label{sec:setting}

\subsection{Notations}
We first collect some important notations used in the paper. We denote by $\mathbb{R}^d$ the $d$-dimensional Euclidean space, and by $\mathbb{R}_+^d = [0,\infty)^d$ the non-negative quadrant. For an arbitrary matrix $A\in\mathbb{R}^{d\times m}$ where $d\leq m$, we denote by $JA$ the Jacobian determinant $JA=\sqrt{|\textrm{det}(AA^\top)|}$  associated with the linear map from $\mathbb{R}^m$ to $\mathbb{R}^d$ induced by $A$, see~\cite[Theorem~3.6]{ref:evans2015measure}. We assume there exists a probability space $(\mathcal{M}, \mathcal{F}, P)$, in which all the random elements are defined. For a $d$-dimensional random vector $X$, we denote by $\mathcal{L}(X)$ the law of $X$. We denote by $X_i$ the $i$-th coordinate of $X$, and denote by $\{X^{(1)},\ldots,X^{(n)}\}$ a random sample of size $n$, where $X^{(i)}$ denotes the $i$-th sample. The symbol $\mathbb{P}_n$ denotes  the empirical distribution of these $n$ samples. The symbol $\delta_{x}$ (or $\delta_{\{x\}}$) denotes the Dirac measure concentrated at $x\in\mathbb{R}^d$. We denote by $E[X]$ the expectation of $X$, if it exists. We also denote by $E_Y[X] = E[X | Y]$ the conditional expectation of $X$ given $Y$. The symbol $\Rightarrow$ denotes convergence in distribution, and the symbol $\xrightarrow{v} $ denotes vague convergence (cf.~\cite[Section~3.3.5]{ref:resnick2007heavy}). We denote by $\|\cdot\|$ 
and $\|\cdot\|_1$ any generic norm and the $\ell_1$-norm, respectively, in the Euclidean space. We follow the usual conventions in using the symbols $o(\cdot)$ (small-o), $O(\cdot)$ (big-O), $\Theta(\cdot)$ (asymptotic equivalence) and $O_p(\cdot)$ (bounded in probability), and we use $\tilde o(\cdot)$, $\tilde O(\cdot)$, $\tilde\Theta(\cdot)$ to hide poly-log factors. Finally, for two positive sequences $a_n$ and $b_n$, $a_n\simeq b_n$ denotes $\lim_{n\to\infty}a_n/b_n=1$.


\subsection{Preliminary: Theory of Regular Variation}
The theory of regularly varying functions provides a useful mathematical framework for heavy-tailed analysis~\cite{ref:resnick2007heavy}. A function is regularly varying if it behaves asymptotically like a power function. In technical terms, the regularly varying function is defined as follows.
\begin{definition}
[{Regularly varying function~\cite[Definition 2.1]{ref:resnick2007heavy}}]
A measurable function $U:\mathbb{R}_+ \to\mathbb{R}_+$ is regularly varying at $\infty$ with index $\rho\in\mathbb{R}$ (written $U\in\mathrm{RV}_\rho$), if for $x>0$, 
\[
\lim_{t\to\infty}\frac{U(tx)}{U(t)} = x^\rho,
\]
where $\rho$ is called the exponent of variation.
\end{definition}
Note that we always write ``regularly varying'' as a shorthand for ``regularly varying at $\infty$''.  
Consider $X_1$ as a nonnegative random variable with a cumulative distribution function $F$, and denote the tail probability function by $\bar{F} = 1- F$. Then $X_1$ is called regularly varying with index $\alpha\in(0,\infty)$ if $\bar{F}\in\mathrm{RV}_{-\alpha}$ (see also~\cite[Theorem 3.6]{ref:resnick2007heavy}). 

A prototypical example of a regularly varying distribution with index $\alpha\in(0,\infty)$ is the Pareto distribution, which has the following form.
\begin{definition}[Pareto distribution]\label{def:ourparetodef}
The random variable $X_1\in\mathbb{R}_+$ follows the Pareto distribution with index $\alpha\in(0,\infty)$ if $X_1$ has the density
    \[f(x_1) = \alpha(1+x_1)^{-(\alpha+1)}\quad\forall x_1>0.\]
\end{definition}
The density function above is a translation of the density in the literature; see, for example,~\cite[Equation~(4.4)]{ref:resnick2007heavy}. 
In this paper, we estimate the extremal dependency of jointly distributed multivariate random vectors, which are regularly varying. Assuming that $X$ is a $d$-dimensional random vector that takes values in the nonnegative quadrant $\mathbb{R}_+^d$, there are several equivalent ways to define multivariate regular variation. 

\begin{definition}[{Multivariate regularly varying tail probabilities~\cite[Theorems 6.1 \& 3.6]{ref:resnick2007heavy}}]\label{def:multireg}
    A random vector $X\in\mathbb{R}_+^d$ is called regularly varying with index $\alpha\in(0,\infty)$ if $X$ satisfies any of the following conditions:
    \begin{enumerate}[label=(\roman*), leftmargin = 6mm]
        \item There exists a function $b(\cdot)\in \mathrm{RV}_{\frac{1}{\alpha}}$, and a non-negative Radon measure $\nu$ on $\mathbb{R}^d_+\backslash \{\mathbf{0}\}$, such that  
        \[
        n P\left(\frac{X}{b(n)}\in\cdot\right)\xrightarrow{\text{v}} \nu \quad \text{as }n\to\infty,
        \]
        where $\xrightarrow{\text{v}} $ denotes vague convergence in $M_+(\mathbb{R}^d_+\backslash\{ \mathbf{0}\})$, the set of non-negative Radon measures on $\mathbb{R}^d_+\backslash \{\mathbf{0}\}$.
        \item For any norm $\|\cdot\|$ in $\mathbb{R}^d$ and positive unit sphere $\aleph_+ =\{\omega\in\mathbb{R}^d_+:\|\omega\|= 1\}$, there exists a probability measure $S(\cdot)$ on $\aleph_+$, called the spectral measure, and a sequence $b_n\to\infty$ such that for $(R,\Upsilon) = (\|X\|,\frac{X}{\|X\|})$, we have
        \[
        nP\left(\left(\frac{R}{b_n},\Upsilon\right)\in\cdot\right)\xrightarrow{\text{v}} c\cdot\nu_\alpha\times S
        \]
        in $M_+((0,\infty]\times \aleph_+)$ for some $c>0$, where $\nu_\alpha\times S$ denotes the product measure of $\nu_\alpha$ and $S$, and $\nu_{\alpha}([r,\infty)) = r^{-\alpha}$ for all $r>0$.
    \end{enumerate}
\end{definition}

As a prototypical example, the product density of i.i.d.~Pareto distributions with the same index $\alpha$ is multivariate regularly varying, see \cite[Section~6.5]{ref:resnick2007heavy}. In this paper, we shall primarily use part (ii) of the definition to characterize multivariate regularly varying random vectors, while we occasionally use the equivalent form in (i). In part (ii), the tail dependence structure is summarized using the spectral measure, which is the object of interest in our study.



\subsection{Model Description and Statistic of Interest}
Consider a random vector $Z= (Z_1,\ldots,Z_m)\in\mathbb{R}^m_+$, which is multivariate regularly varying with index $\alpha$ and has i.i.d.~components. We assume that the observation $X$ follows a linear model given by 
\begin{equation}
\label{eq:XAZ}
X = AZ,\quad A\in\mathbb{R}_+^{d\times m},
\end{equation}
where $A$ is entry-wise non-negative and of dimension $d$-by-$m$. We interpret $Z$ as the latent risk factor.
This paper requires $X$ to be non-degenerate in $\mathbb{R}_+^d$. We also wish to allow flexibility for modeling the number of risk factors or, equivalently, the number of components of $Z$. Hence, we assume that $d\leq m$. In particular, as $m\to\infty$, the linear model~\eqref{eq:XAZ} approximates a non-parametric model~\cite[Proposition~4]{ref:cooley2019decomposition}. Our model is closely related to the max-linear models in~\cite{ref:FOUGERES2013dense,ref:Gissibl2018maxlinear,ref:Gissibl2021identifiability,ref:KLUPPELBERG2021estimating} with a notable difference being that we replace the max-linear operator therein by the canonical (additive-)linear operator in~\eqref{eq:XAZ}. Despite this difference, the models are asymptotically equivalent, and we explain their connection in more detail in Section~\ref{sec:connnectiontomaxlinear}, see also their comparisons in~\cite{ref:cooley2019decomposition}.

For any specific threshold $\tau>0$, we denote by $\mu^\tau_A$ the conditional measure given by
\begin{equation}\label{eq:muAtau}
\mu^\tau_A = \mathcal{L} \left(\frac{X}{\|X\|_1}\Big|\|X\|_1>\tau\right).
\end{equation}
The next lemma asserts that as $\tau\to\infty$, $\mu_A^\tau$ converges in distribution to a limiting law. 

\begin{lemma}[Limit law]\label{lem:KAconv}
    Assume that $Z = (Z_1,\ldots,Z_m)$ is multivariate  regularly varying with index $\alpha$ and has~i.i.d.~components. Given an entry-wise non-negative matrix $A$ such that each column $A_{\cdot i}$ satisfies  $\|A_{\cdot i}\|_1>0$. Let $K_A$ be the measure given by
    \begin{equation}\label{eq:limitcondmeasure}
K_A \Let \sum_{i=1}^m\frac{\|A_{\cdot i}\|_1^\alpha}{\sum_{j=1}^m\|A_{\cdot j}\|_1^\alpha}\delta_{\{\frac{A_{\cdot i}}{\|A_{\cdot i}\|_1}\}}.
\end{equation}
 Then $\mu^\tau_A \Rightarrow K_A$ as $\tau \to \infty$.
\end{lemma}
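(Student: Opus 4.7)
The plan is to exploit the classical \emph{one-big-jump} principle for sums of independent regularly varying random variables. Writing $c_i \Let \|A_{\cdot i}\|_1 > 0$, the non-negativity of $A$ gives $\|X\|_1 = \sum_{i=1}^m c_i Z_i$. The intuition is that for large $\tau$, the event $\{\|X\|_1 > \tau\}$ is driven by exactly one $c_i Z_i$ being large; on this event, $X$ is approximately parallel to the corresponding column $A_{\cdot i}$, so $X/\|X\|_1$ approaches $A_{\cdot i}/c_i$. The atomic weights in $K_A$ will arise from the asymptotic probabilities of which index is responsible for the big jump.

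Concretely, I would fix a small $\eps \in (0, 1/m)$ and partition $\{\|X\|_1 > \tau\}$ into the disjoint events
\[
E_i^\tau(\eps) \Let \{c_i Z_i > \tau\} \cap \bigcap_{j \neq i} \{c_j Z_j \leq \eps \tau\}, \quad i \in \{1, \ldots, m\},
\]
together with a residual $R^\tau(\eps) \Let \{\|X\|_1 > \tau\} \setminus \bigcup_{i=1}^m E_i^\tau(\eps)$. By independence and regular variation of $Z_1$, $P(E_i^\tau(\eps)) \simeq c_i^\alpha P(Z_1 > \tau)$ as $\tau \to \infty$. The residual splits into (a) a \emph{two-large} event where at least two of the $c_j Z_j$ exceed $\eps \tau$, of probability $O(\tau^{-2\alpha} L(\tau)^2) = o(P(Z_1 > \tau))$ (with $L$ the slowly varying tail of $Z_1$); and (b) a \emph{boundary} event where a single $c_i Z_i$ lies in $\bigl((1-(m-1)\eps)\tau, \tau\bigr]$, of probability $O(\eps) \cdot P(Z_1 > \tau)$ by Potter-type bounds. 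Summing,
\[
P(\|X\|_1 > \tau) \simeq \Big(\sum_{i=1}^m c_i^\alpha \Big) P(Z_1 > \tau).
\]

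For any bounded continuous $f: \aleph_+ \to \R$, on $E_i^\tau(\eps)$ we have $Z_j/Z_i \leq \eps c_i/c_j$ for $j \neq i$, hence $X/\|X\|_1$ lies in a $\phi(\eps)$-neighborhood of $A_{\cdot i}/c_i$ with $\phi(\eps) \to 0$, so by continuity $f(X/\|X\|_1) = f(A_{\cdot i}/c_i) + o_\eps(1)$ on $E_i^\tau(\eps)$. Combining the estimates and sending first $\tau \to \infty$, then $\eps \to 0$,
\[
\int f\,\dd\mu_A^\tau = \frac{E[f(X/\|X\|_1) \, 1_{\{\|X\|_1 > \tau\}}]}{P(\|X\|_1 > \tau)} \longrightarrow \sum_{i=1}^m \frac{c_i^\alpha}{\sum_j c_j^\alpha} f\Big(\frac{A_{\cdot i}}{c_i}\Big) = \int f \, \dd K_A,
\]
which by the Portmanteau theorem yields $\mu_A^\tau \Rightarrow K_A$. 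The main technical obstacle is the boundary contribution in the residual $R^\tau(\eps)$: bounding it uniformly in $\tau$ by $O(\eps) P(Z_1 > \tau)$ requires Potter-type control on the slowly varying part of $P(Z_1 > \cdot)$; everything else reduces to a standard continuity argument on the compact sphere $\aleph_+$.
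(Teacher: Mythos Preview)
Your argument is correct, but it takes a genuinely different route from the paper's. The paper's proof is a two-line reduction to existing MEVT machinery: it observes that $\|X\|_1 = \|Z\|_a$ for the weighted norm $\|z\|_a = \sum_i c_i |z_i|$, rewrites $\mu_A^\tau$ as the $A$-pushforward of $\mathcal L(Z/\|Z\|_a \mid \|Z\|_a > \tau)$, invokes the known fact that the limit measure $\nu$ of an i.i.d.\ regularly varying vector is concentrated on the coordinate axes (so the spectral measure on the $a$-simplex is $\sum_i c_i^\alpha \delta_{e_i/c_i}/\sum_j c_j^\alpha$), and then applies the continuous mapping theorem for the linear map $A$. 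Your one-big-jump decomposition instead reproves this axis-concentration from scratch in the present setting, partitioning $\{\|X\|_1>\tau\}$ into single-dominant-coordinate events plus a residual that you control separately. Both approaches yield the same conclusion; the paper's is shorter and leans on the vague-convergence framework already set up in Definition~\ref{def:multireg}, while yours is more self-contained and may be easier to follow for a reader not fluent in spectral-measure formalism. One minor remark: the Potter bounds you flag as the ``main technical obstacle'' are not strictly needed here, since you send $\tau\to\infty$ before $\eps\to 0$; the pointwise regular-variation limit $P(Z_1 > (1-(m-1)\eps)\tau/c_i)/P(Z_1>\tau/c_i)\to (1-(m-1)\eps)^{-\alpha}$ already gives the $O(\eps)$ bound on the boundary contribution in that order of limits.
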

\begin{proof}[Proof of Lemma~\ref{lem:KAconv}]
Let $e\in\mathbb{R}^d$ denote the vector of all ones. Define $a\in\mathbb{R}^m_+$ as the vector with components $a_j = \|A_{\cdot j}\|_1$. Because $X$ is non-negative, 
    \[
\|X\|_1 = e^\top X = a^\top Z = \|Z\|_a,
\]
where $\|z\|_a = \sum_{i=1}^m a_i|z_i|$ defines a weighted $\ell_1$-norm in $\mathbb{R}^m$. Thus
\begin{equation}\label{eq:XandAZtransform}
\mathcal{L}\left(\frac{X}{\|X\|_1} \Big|\|X\|_1>\tau\right) = \mathcal{L}\left(A\frac{Z}{\|Z\|_a}\Big |\|Z\|_a>\tau\right).
\end{equation}
Since $Z$ is regularly varying with index $\alpha$ and has i.i.d.~components, there exists a sequence $b_n$ such that
\[
n P\left(\frac{ Z}{b_n}\in \cdot\right)\xrightarrow{v} \nu,
\]
where the measure $\nu$ has the form, see also~\cite[Equation (6.30)]{ref:resnick2007heavy},
\[
\nu(\textrm{d}z_1,\ldots,\textrm{d}z_m) = \sum_{j=1}^m \delta_0(\textrm{d}z_1)\times\cdots\times \delta_0(\textrm{d}z_{j-1})\times \nu_{\alpha}(\textrm{d}z_j)\times\delta_0(\textrm{d}z_{j+1})\times\cdots\times\delta_0(\textrm{d}z_m).
\]
Note that the $a$-simplex $\aleph^a_+ =\{\omega\in\mathbb{R}^m_+:\|\omega\|_a= 1\}$ has the basis vectors $\{ e_{\cdot i}/a_i,i=1,\ldots,m\}$, where $\{ e_{\cdot i}\}_{i=1,\ldots,m}$ forms the standard basis in $\mathbb{R}^m_+$.   Choosing the norm $\|\cdot\|_a$ in Definition~\ref{def:multireg} (ii), and noting that $\nu_\alpha([a_j^{-1},\infty)) = a_j^\alpha$ for all $j$, we have
\[
\mathcal{L}\left(\frac{Z}{\|Z\|_a}\Big|\|Z\|_a>\tau\right)  \Rightarrow\sum_{i=1}^m\frac{a_i^\alpha}{\sum_{j=1}^m a_j^\alpha} \delta_{\{\frac{ e_{\cdot i}}{a_i}\}},
\]
where the right hand side above is the angular measure in Definition~\ref{def:multireg} (ii). By continuous mapping theorem~\cite[Theorem 3.1]{ref:resnick2007heavy} and relation~\eqref{eq:XandAZtransform}, we have that
\[
\mathcal{L}\left(\frac{X}{\|X\|_1} \Big|\|X\|_1>\tau\right) \Rightarrow K_A.
\]
This completes the proof.
\end{proof}

\begin{figure}[h]
\caption{Schematic representation of the convergence to the spectral measure. In this drawing, the black dashed line is the positive simplex. The black dashed arrows point to the locations of the Dirac atoms in the limiting spectral measure. As $\tau$ increases, the probability distribution of $X/\|X\|_1$ concentrates around the Dirac atoms, as depicted by the blue drawings.}
\centering
\includegraphics[width=0.7\textwidth]{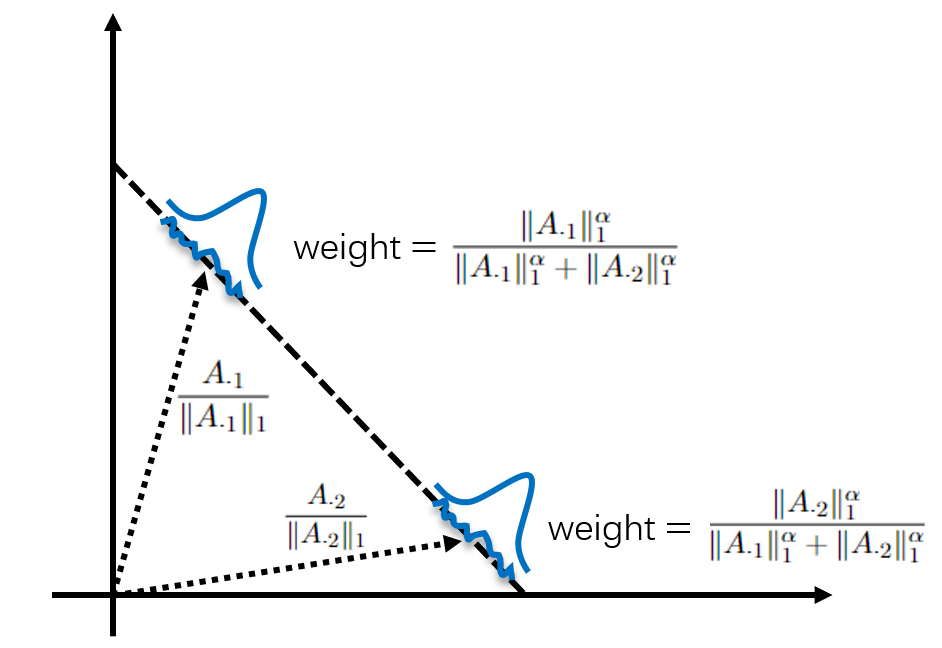}
\label{fig:spikerepr}
\end{figure}

Intuitively, the spectral measure $K_A$ represents some ``directions'' along which the components of $X$ jointly occur in the extremes, i.e., when the norm of $X$ is large. Equation~\eqref{eq:limitcondmeasure} asserts that the spectral measure $K_A$ is a mixture distribution of several Dirac measures. The weights of the mixture components are given by the relative magnitudes of the columns of $A$, while the locations of the Dirac atoms are determined by the directions of these columns. See Figure~\ref{fig:spikerepr} for a schematic representation of the convergence. Notice that a similar result is established in~\cite[Corollary~1]{ref:cooley2019decomposition}, though the authors work with a $\ell_2$-type norm (instead of our $\ell_1$-norm) and they restrict $\tau$ to a discrete sequence.  Our choice of $\ell_1$-norm in this paper is due to its more favorable algebraic properties in the analysis.

Given $n$ i.i.d.~samples $X^{(1)},\ldots, X^{(n)}$ distributed according to model~\eqref{eq:XAZ}, while $K_A$ is identifiable from Lemma~\ref{lem:KAconv}, the matrix $A$ itself is typically harder to estimate. The following proposition supports this assertion.
\begin{proposition}[Identifiability and degeneracy]\label{prop:relationAKA}
Under the assumptions of Lemma~\ref{lem:KAconv} and suppose additionally that $m\geq d \geq 2$, we have:
    \begin{enumerate}[label=(\roman*)]
        \item If the $m$ directions $\{\frac{A_{\cdot i}}{\|A_{\cdot i}\|_1}, i = 1,\ldots,m\}$ are all distinct, then $A$ is identifiable from model~\eqref{eq:XAZ} up to a permutation of columns. 
        \item If $Z_i$, $i=1,\ldots,m$ are i.i.d.~and have the common density $g(z_i) = 
 \alpha(1+z_i)^{-(\alpha+1)}$, then the Fisher information matrix of model~\eqref{eq:XAZ} is degenerate at all $A$'s where $JA>0$ and $A_{\cdot i } = A_{\cdot j}$ for some $i\neq j$. 
    \end{enumerate}
\end{proposition}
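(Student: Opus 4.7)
The plan is to handle the two parts with separate arguments. For part (i), I will show that the law of $X$ determines the columns of $A$ through two identifiable quantities: the limiting spectral measure $K_A$ of Lemma~\ref{lem:KAconv} (which encodes the normalized columns and their relative $\alpha$-powered norms) and the constant in the tail expansion of $\|X\|_1$ (which pins down the absolute scale). For part (ii), I will exploit the exchangeability of the latent coordinates of $Z$ to produce a local symmetry of the likelihood at any $A$ with $A_{\cdot i}=A_{\cdot j}$, from which the degeneracy of the Fisher information matrix at such $A$ will follow directly.

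For (i), I will suppose two matrices $A$ and $A'$ produce the same law of $X$. By Lemma~\ref{lem:KAconv}, $K_A=K_{A'}$; when the $m$ normalized columns of $A$ are distinct, $K_A$ is a mixture of $m$ distinct Dirac atoms, and reading off the atoms and their weights from $K_{A'}$ will show that $A'$ also has $m$ distinct normalized columns, which coincide with those of $A$ as an unordered set and are paired with the same relative weights $\|A_{\cdot i}\|_1^\alpha/\sum_j\|A_{\cdot j}\|_1^\alpha$. To upgrade these relative weights to absolute norms, I will use the tail asymptotic
\[
\tau^\alpha P(\|X\|_1>\tau)\to \sum_{j=1}^m\|A_{\cdot j}\|_1^\alpha\qquad\text{as }\tau\to\infty,
\]
which follows from the single-big-jump principle for i.i.d.~regularly varying summands $a_j Z_j$ (with $a_j=\|A_{\cdot j}\|_1$) and is itself determined by $\mathcal{L}(X)$. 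Combining this with the relative weights from $K_A$ recovers each $\|A_{\cdot i}\|_1$, hence each column $A_{\cdot i}$, up to a permutation of column indices.

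For (ii), the key observation is that, because $(Z_1,\dots,Z_m)$ is exchangeable, $\mathcal{L}(A\Pi Z)=\mathcal{L}(AZ)$ for every permutation matrix $\Pi$. Under the assumption $JA>0$, $X=AZ$ admits a Lebesgue density $f_X(\cdot;A)$ on its open conical support in $\mathbb{R}_+^d$ (via a coarea-type formula that integrates the smooth product Pareto density over the $(m-d)$-dimensional fibre $A^{-1}(x)\cap\mathbb{R}_+^m$), so the exchangeability will give $f_X(x;A)=f_X(x;A^{(ij)})$, where $A^{(ij)}$ swaps columns $i$ and $j$. At a point with $A_{\cdot i}=A_{\cdot j}$, I then consider the anti-symmetric perturbation $B\in\mathbb{R}^{d\times m}$ with $B_{\cdot i}=-B_{\cdot j}=v$ and $B_{\cdot k}=0$ for $k\notin\{i,j\}$. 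A direct computation gives $(A+\eps B)^{(ij)}=A-\eps B$, hence
\[
f_X(x;A+\eps B)=f_X\bigl(x;(A+\eps B)^{(ij)}\bigr)=f_X(x;A-\eps B),
\]
so $\eps\mapsto f_X(x;A+\eps B)$ is even in $\eps$ for a.e.~$x$. Differentiating at $\eps=0$ yields a vanishing score in the direction $B$, which places $B$ in the null space of the Fisher information matrix for every $v\in\mathbb{R}^d$, proving the claimed degeneracy.

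The hardest step will be in part (ii): verifying that $f_X(\cdot;A)$ is regular enough in $A$ at $A$ with $JA>0$ for the density-level symmetry to yield a bona fide zero-score identity. When $d<m$, $f_X$ is expressed as a fibre integral of the product Pareto density, and one must justify differentiation under this integral in a neighbourhood of $A$. This is where the explicit smooth and strictly positive Pareto density in Definition~\ref{def:ourparetodef} together with the open condition $JA>0$ will be used: they provide smooth local parametrisations of the fibre $A^{-1}(x)\cap\mathbb{R}_+^m$ and the integrable domination needed to invoke dominated convergence, so that the symmetry $f_X(x;A+\eps B)=f_X(x;A-\eps B)$ passes cleanly to derivatives and yields the Fisher degeneracy identity.
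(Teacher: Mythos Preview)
Your proposal is essentially correct, and for part (ii) it takes a genuinely different and more elegant route than the paper.

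\textbf{Part (i).} Your strategy coincides with the paper's through the identification of the normalized columns and their relative $\alpha$-powered norms from $K_A$. The paper then fixes the overall scale by observing that $E[X_i^v]\neq E[(tX_i)^v]$ for $0<v<\alpha$, i.e.\ by a moment argument. Your alternative via the tail of $\|X\|_1$ also works, but the limit you wrote is not quite right under Lemma~\ref{lem:KAconv}'s hypotheses: the marginals of $Z$ are only assumed regularly varying, so $P(Z_1>z)=L(z)z^{-\alpha}$ with $L$ slowly varying, and $\tau^\alpha P(\|X\|_1>\tau)\sim L(\tau)\sum_j\|A_{\cdot j}\|_1^\alpha$ need not converge. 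The fix is immediate: normalise by $P(Z_1>\tau)$ instead (the same for $A$ and $A'$ since $Z$ is fixed), or simply note that $\mathcal L(AZ)=\mathcal L(A'Z)$ forces $P(\|AZ\|_1>\tau)/P(\|A'Z\|_1>\tau)\equiv 1$, whose limit is $\big(\sum_j\|A_{\cdot j}\|_1^\alpha\big)/\big(\sum_j\|A'_{\cdot j}\|_1^\alpha\big)$.

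\textbf{Part (ii).} The paper proceeds by a direct calculation: it expands $\log\tilde g_{A+hB}(x)$ to first order in $h$ via the coarea representation~\eqref{eq:surfaceint}, obtains the score as $-\big(\Tr(D)+\tfrac{\int_{Az=x}\nabla g(z)\,Dz\,\dd\mathcal H^{m-d}}{\int_{Az=x}g(z)\,\dd\mathcal H^{m-d}}\big)$ with $B=AD$, and checks that the antisymmetric choice $D_{ij}=-D_{ji}$ (all other entries zero) kills this expression for every $x$ when $A_{\cdot i}=A_{\cdot j}$, using the symmetry of the fibre integrals in $z_i,z_j$. Your exchangeability argument reaches the same conclusion without computing the score: the identity $f_X(\,\cdot\,;A+\eps B)=f_X(\,\cdot\,;A-\eps B)$ follows cleanly from $(A+\eps B)\Pi_{(ij)}=A-\eps B$ and $\Pi_{(ij)}Z\stackrel{d}{=}Z$, and evenness in $\eps$ immediately yields a vanishing score. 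Your route is shorter and in fact proves more: you obtain a $d$-dimensional subspace of null directions (all $B$ with $B_{\cdot i}=-B_{\cdot j}=v$, arbitrary $v\in\mathbb R^d$), whereas the paper's explicit choice $B=AD$ is the special case $v\propto A_{\cdot i}$. The regularity you flag as ``the hardest step'' is exactly what the paper also needs (and handles) via smoothness of the integrand and compactness of the fibre $\{z\in\mathbb R_+^m:Az=x\}$; your sketch of the justification is adequate.
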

Here, we provide proof for part (i) of the proposition. The proof for part (ii) is relegated to Appendix~\ref{ap:settingproof}, where we compute the Fisher information matrix and construct a particular direction along which the Fisher information is degenerate. Here the Fisher information matrix of model~\eqref{eq:XAZ} is given by $E[\nabla^2_A \log \tilde g_A(X)]$, where $\nabla^2_A$ is the Hessian operator with respect to $A$ and the likelihood function $\tilde g_A(x)$ is given by a surface integral, refer to equation~\eqref{eq:surfaceint} in Appendix~\ref{ap:settingproof}.
\begin{proof}[Proof of Proposition~\ref{prop:relationAKA}(i)] First, from Lemma~\ref{lem:KAconv}, we know that $K_A$ is identifiable from model~\eqref{eq:XAZ}. Second, from the expression of $K_A$ in~\eqref{eq:limitcondmeasure}, we can identify the column directions of $A$ (i.e., the normalized vector $A_{\cdot i}/\|A_{\cdot i}\|_1$, because they are distinct), as well as their norms (i.e., $\|A_{\cdot i}\|_1$) up to a common constant scaling. Finally, $X$ and $tX$ cannot follow the same distribution if $t\neq1$. For example, since $JA>0$, we have that $X_i$ is non-zero and non-negative, hence $E[X_i^v]\neq E[(tX_i)^v]$ for any sufficiently small exponent $0<v<\alpha$ so that the expectation exists. Thus, we can identify $A$ up to a reordering of the columns.
\end{proof}

In light of our discussion, $K_A$ is our quantity of interest, representing the extremal dependency structure of observations $X$ from model~\eqref{eq:XAZ}. 

\subsubsection{Connection to the Recursive Max-linear Models}
\label{sec:connnectiontomaxlinear}
Our linear model~\eqref{eq:XAZ} is related to the \textit{recursive max-linear models} on directed acyclic graphs (DAGs) considered by~\cite{ref:Gissibl2018maxlinear,ref:Gissibl2021identifiability,ref:KLUPPELBERG2021estimating}. More specifically, consider a given DAG $\mathcal{D} = (V,E)$ with nodes $V=\{1,\ldots,d\}$ and edges $E = \{(k,i): i\in V \textrm{ and }k\in\textrm{pa}(i)\}$, where $\textrm{pa}(i)$ denotes the set of parents of node $i$. A recursive max-linear model on DAG $\mathcal{D}$ is defined by
\begin{equation}\label{kluppelbergweights}
X_i = \bigvee_{k\in\textrm{pa}(i)} c_{ki} X_k\vee c_{ii}Z_i, \quad i=1,\ldots,d,
\end{equation}
with independent, regularly varying random variables $Z_1,\ldots, Z_d\in\mathbb{R}_+$ and positive weights $c_{ki}$ and $c_{ii}$.
The results in~\cite{ref:Gissibl2018maxlinear,ref:Gissibl2021identifiability,ref:KLUPPELBERG2021estimating} indicate that the max-linear model can be equivalently expressed as
\begin{equation}\label{eq:kluppelbergreform}
X_i = \bigvee_{j=1}^d a_{ij}Z_j, \quad i =1,\ldots d, 
\end{equation}
where the coefficients $a_{ij}$ are determined by an appropriate balancing of the weights $c$ to calibrate~\eqref{eq:kluppelbergreform} with~\eqref{kluppelbergweights}. We notice that model~\eqref{eq:kluppelbergreform} differs from~\eqref{eq:XAZ} in using the max-linear operator. However, they are asymptotically equivalent in the sense that their angular measures coincide and are given by $K_A$~\cite[Remark 1.(i) and equation~(5)]{ref:KLUPPELBERG2021estimating}. Also notice that model~\eqref{eq:kluppelbergreform} restricts $m = d$, while we allow $m$ to be potentially \textit{larger} than $d$ in~\eqref{eq:XAZ}.
In particular, as $m\to\infty$, the linear model~\eqref{eq:XAZ} approximates a non-parametric model~\cite[Proposition~4]{ref:cooley2019decomposition},
hence, our model can approximate any continuous spectral measure by increasing the number of latent factors. We chose model~\eqref{eq:XAZ} over model~\eqref{eq:kluppelbergreform} in this paper because the linear operator is more amenable for analysis. However, our techniques in analyzing the asymptotic convergence rate are generalizable to the case with the max-linear operator.

\subsection{Minimax Estimation Risk over Pareto-like Models}
Let $\hat{K}$ denote a generic estimator for $K_A$, i.e.,~a probability measure supported in  the positive 1-simplex $\aleph_+^1 = \{\omega\in\mathbb{R}^d_+:\|\omega\|_1 = 1\}$. We quantify the estimation error by the $p$-th order Wasserstein distance, where $p\in[1,\infty)$. The Wasserstein distance was previously used in univariate extreme value theory to compare the empirical distribution of exceedances to the empirical distribution associated with the limit model~\cite{ref:bobbia2021coupling}. In general, for two probability measures $\mu$ and $\mu'$ supported in an arbitrary Polish space $\mathcal{S}$, the $p$-th order Wasserstein distance between them is defined as 
\begin{equation}\label{eq:wassdef}
\mathbb{W}_p(\mu,\mu') =  \left(\inf_{\gamma\in\Gamma(\mu,\mu')}E_{(y,y')\sim\gamma} \left[d(y,y')^p\right]\right)^{\frac{1}{p}},
\end{equation}
where $\Gamma(\mu,\mu')$ is the set of all couplings of $\mu$ and $\mu'$. A coupling $\gamma\in\Gamma(\mu,\mu')$ is a joint probability measure on $\mathcal{S}\times\mathcal{S}$ whose marginals are $\mu$ and $\mu'$ in
the first and second coordinates, respectively. Here $d(\cdot,\cdot)$ is the ground transportation cost on the space $\mathcal{S}$.  In our setting, we have $\mathcal{S} = \aleph_+^1$ and make the following assumption.

\begin{assumption}[Ground transportation cost] \label{a:cost}
Consider the transportation cost for  the Wasserstein distance as $d(\omega, \omega') = \|\omega-\omega'\|_1$ for any $\omega$ and $\omega'\in\aleph_+^1$.
\end{assumption}

In this paper, we establish minimax upper and lower bounds to quantify the statistical optimality of certain estimation procedures. In this view, we are interested in the worst-case performance of  estimation procedures for the model~\eqref{eq:XAZ} over a  family of matrices $A$ and distributions for $Z$. In particular, we define the class of models as follows
\begin{definition}[Model class] \label{def:model}
    We assume a class of models 
    \begin{subequations} \label{eq:minimaxmodel}
\begin{equation}\label{eq:M}
M = \cup_{k=1}^\infty M_k,
\end{equation}
where each submodel class is
$M_k = \left\{ \mathcal{L}(X): X = AZ, ~\textrm{for } A\in\mathcal{A}~\textrm{and } \mathcal{L}(Z)\in \tilde M_k\right\}$. The set of possible distributions for the components $Z$ is
\begin{equation} \label{eq:lawZ}
\tilde M_k = \left\{
\mathcal{L}(Z) : \begin{array}{l}
Z \textrm{ admits a (Lebesgue) density }g(z)\textrm{ in }\mathbb{R}^m_+ \\
\left|\frac{g(z) - \alpha^m\prod_{i=1}^m (1+z_i)^{-(\alpha+1)}}{\alpha^m\prod_{i=1}^m (1+z_i)^{-(\alpha+1)}}\right|\leq \xi  k^{-s}~\forall z \\
g(z) \propto\prod_{i=1}^m (1+z_i)^{-(\alpha+1)} \textrm{ if } \|z\|_1> \zeta  k^{\frac{1-2s}{\alpha}}
\end{array}
\right\} ,
\end{equation}
and the set of possible  matrices $\mathcal A$ is
\[
\mathcal{A} = \left\{A\in\mathbb{R}_+^{d\times m}: l \leq \min_{i}\|A_{\cdot i}\|_1 \leq \max_i\|A_{\cdot i}\|_1\leq u\textrm{ and }JA\geq \sigma\right\}.
\]
\end{subequations}
Throughout, we assume the constants satisfy $m\geq d\geq2$, $0<l<1<u$, $\sigma>0$, $0<s<\frac{1}{2}$, $ 0<\xi<1$, and $\zeta>0$.
\end{definition}

\begin{figure}[h]
\caption{Schematic representation of the structure of $g(z)$. In this drawing, the black solid line is the Pareto density, and the black dashed lines are the ranges allowable for the perturbations, essentially an $L_\infty$ ball (in percentage terms) around the Pareto density. The black vertical line represents the threshold $\zeta k^{\frac{1-2s}{\alpha}}$. Above the threshold, the density is required to be proportional to the exact Pareto density, which we showcase by the red solid line. Below the threshold, the density can vary arbitrarily within the $L_\infty$ ball, which we represent by the red dashed arrows. Our construction is motivated by the Hall-Welsh class in~\cite{ref:hall1984best}.}
\centering
\includegraphics[width=0.7\textwidth]{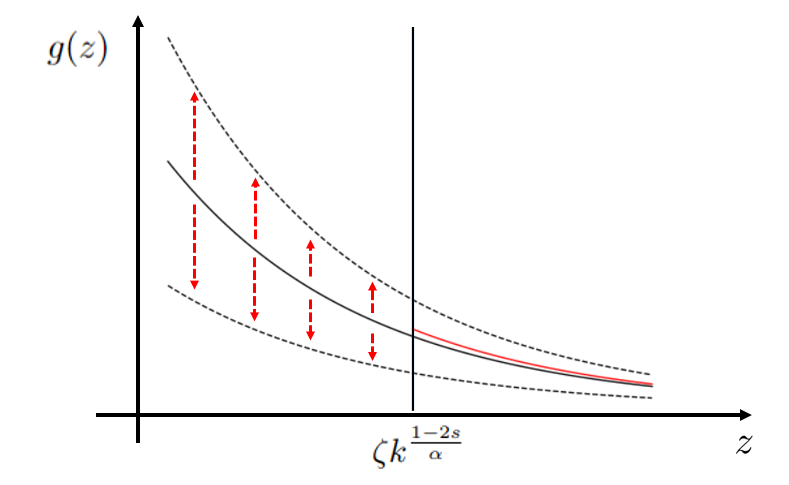}
\label{fig:schematicrepr}
\end{figure}

The model class $M_k$ is induced by two classes $\tilde M_k$ and $\mathcal A$. The class $\tilde M_k$ modeling the perturbation on the distribution of $Z$ can be explained as follows: First, there is a baseline model in which the components of $Z$ are i.i.d.~and each follows the Pareto distribution with known parameter $\alpha$, which implies that $Z$ admits the density $\alpha^m\prod_{i=1}^m (1+z_i)^{-(\alpha+1)}$. We consider around this Pareto baseline a family of non-parametric models $\tilde M_k$ indexed by $k$, where the latent factors $Z$ can deviate from the baseline with the discrepancy controlled by the rate $k^{-s}$. Further, we impose that the density of $Z$ is proportional to a Pareto density above a certain threshold of order $k^{\frac{1-2s}{\alpha}}$, but the density can vary freely below this threshold. See Figure~\ref{fig:schematicrepr} for a schematic representation of this structure. As $k$ increases, the overall discrepancy $k^{-s}$ shrinks; however, this also leads to an expansion of the region where the density is modeled non-parametrically. Any reasonable estimation procedure would need to balance this tradeoff carefully. Intuitively, this perturbation relative to the Pareto baseline motivates a thresholding procedure.

The class of matrices $\mathcal{A}$ is easier to parse: we consider all matrices whose column norms are bounded between $l$ and $u$ (hence the set $\mathcal{A}$ is compact). Note that the assumptions $m \geq d$ and $JA \geq \sigma$ are required to ensure that $X$ admits a density in $\mathbb{R}^d_+$ by the generalized version of change-of-variables formula~\cite[Theorem~3.11]{ref:evans2015measure}. 

Our construction of the model class is motivated by the Hall-Welsh class in~\cite{ref:hall1984best}. However, there are obvious differences. In particular, \cite{ref:hall1984best} considers the direct observation of $Z$ in a one-dimensional setting and considers the problem of minimax estimation of the regularly varying index $\alpha$. Also see~\cite{ref:drees1998optimal,ref:drees2001minimax,ref:BEIRLANT2006705} for further extensions. In contrast, we assume the knowledge of $\alpha$ and are interested in the joint extremal dependency structure of the vector of observation $X$, which is dictated by the spectral measure $K_A$.


We also define the estimation risk over this model class as follows.
\begin{definition}[Convergence rate]
    For any estimator $\hat K$ of $K_A$, we say that $\hat K$ has a convergence rate $\beta_n$ over the model class $M$ if for any $\eps
>0$, there exists a constant $C_\eps$ such that for all $n$ large enough
\[ 
\sup_{\mathcal L(X) \in M} P(\mathbb{W}_p^p(\hat K, K_A)\ge C_\eps \beta_n )\le \eps.
\]
\end{definition}

This paper studies the minimax convergence rate, the fastest convergence rate achievable over all possible $\hat{K}$, uniformly over the class $M$. In the next section, we investigate these rates by establishing minimax upper and lower bounds.

\section{Main Results}\label{sec:mainresults}

Among all parameters that jointly specify the model $M$ in~\eqref{eq:minimaxmodel}, we highlight the dependency on the parameter $s$, as it is an important parameter for our subsequent analysis. One can view $s$ as a precision parameter that determines the size of the model class: a bigger value of $s$ indicates that the distribution of $Z$ is closer to a Pareto distribution. On the other extreme, if $s$ is small, the distribution of $Z$ can deviate further away from the Pareto distribution.  In our analysis, an important insight emerges regarding the performance of estimators in different regimes defined by $s$. Specifically, in the ``small $s$ regime'', a non-parametric estimator proves to be minimax optimal.  Conversely, in the ``large $s$ regime'', the situation demands a more sophisticated estimator to handle the more nuanced structure of the models effectively.

\subsection{Minimax Upper Bound}\label{sec:upperbound}

A conventional strategy in heavy-tailed analysis is truncating samples at a chosen threshold and using only samples exceeding the threshold for estimation purposes. This strategy has gained many successes in~\cite{einmahl2001nonparametric,einmahl2009maximum,einmahl2012mestimator,drees2015estimating,DEHAAN2008parametric}, among others.

Given $n$ i.i.d.~samples $X^{(1)},\ldots, X^{(n)} \in \R^d_+$ distributed according to a member of the model family~\eqref{eq:minimaxmodel}, we denote by $\mathbb{P}_n$ the empirical measure supported on these samples. Given a threshold level $\tau$, let $n_\tau$ be the number of samples whose $\ell_1$-norm exceeds $\tau$:
\[
    n_\tau = \#\{ 1 \le i \le n: \| X^{(i)}\|_1 > \tau \}.
\]
We denote by $\mathbb{P}_n^\tau$ the empirical measure of the normalized (using the $\ell_1$-norm) and thres-holded (at a level $\tau$) samples, i.e., 
\[
    \mathbb{P}_n^\tau = \frac{1}{n_\tau} \sum_{i: \|X^{(i)} \|_1 > \tau} \delta_{\{\frac{X^{(i)}}{\|X^{(i)}\|_1}\}}.
\]
Note that $\mathbb{P}_n^\tau$ is a probability measure supported on $\aleph^1_+$, and $n_\tau$ is an integer-valued random variable whose distribution depends on the number of samples $n$ and the threshold $\tau$. It is reasonable to expect $n_\tau$ to concentrate around its expectation $nP(\|X\|_1>\tau)$ when both $n$ and $\tau$ tend to infinity in an appropriate fashion. To simplify the exposition, we denote by $X^{(i,n_\tau)}$ for $1\leq i\leq n_\tau$ the $i$-th normalized and thresholded sample that constitutes the measure $\mathbb{P}_n^\tau$.

We can use the empirical measure as a straightforward estimator of $K_A$, i.e., use $\hat{K} = \mathbb{P}_n^\tau$ with an appropriately selected threshold $\tau$. Note that the samples that constitute $\mathbb{P}_n^\tau$ are~i.i.d.~with the common law $\mu_A^\tau$ defined in~\eqref{eq:muAtau}. Motivated by the coupling method in~\cite{ref:bobbia2021coupling}, we consider a set of samples $\{Y^{(1)},\ldots,Y^{({n_\tau})}\}$ which are i.i.d.~generated from the law $K_A$, such that each pair of data $(X^{(i,n_\tau)},Y^{(i)})$ are coupled by the optimal coupling corresponding to $\mathbb{W}_p(\mu^\tau_A,K_A)$ (cf.~minimizer of the optimal transport problem~\eqref{eq:wassdef}). This optimal coupling exists according to~\cite[Theorem~4.1]{ref:villani2008optimal}. We denote by $\mathbb{K}_A^{n_\tau}$ the empirical measure of this set of virtual samples $\{Y^{(1)},\ldots,Y^{({n_\tau})}\}$. By the triangle inequality,
\[
\mathbb{W}_p(\mathbb{P}_n^\tau,K_A)\leq \mathbb{W}_p(\mathbb{P}_n^\tau,\mathbb{K}_A^{n_\tau}) + \mathbb{W}_p(\mathbb{K}_A^{n_\tau},K_A).
\]
Hence by the inequality $(a+b)^p \leq 2^{p-1} (a^p + b^p)$ for any $a, b\geq0$, we obtain
\[
\mathbb{W}_p^p(\mathbb{P}_n^\tau,K_A)\leq 2^{p-1}\left(\mathbb{W}_p^p(\mathbb{P}_n^\tau,\mathbb{K}_A^{n_\tau}) + \mathbb{W}_p^p(\mathbb{K}_A^{n_\tau},K_A)\right).
\]
Let $E_{n_\tau}$ denote the conditional expectation conditioning on the value of ${n_\tau}$, by taking expectations on both sides and by the linearity of expectation, we find
\[
E_{n_\tau}[\mathbb{W}_p^p(\mathbb{P}_n^\tau,K_A)]\leq 2^{p-1}\left( E_{n_\tau}[\mathbb{W}_p^p(\mathbb{P}_n^\tau,\mathbb{K}_A^{n_\tau})] +E_{n_\tau}[ \mathbb{W}_p^p(\mathbb{K}_A^{n_\tau},K_A)]\right).
\]
To proceed, we will look at the upper bound on the right-hand side as the bias-variance decomposition.
From~\cite[Theorem 2.3]{ref:bobbia2021coupling}, we know that 
\[
E_{n_\tau}[\mathbb{W}_p^p(\mathbb{P}^\tau_n,\mathbb{K}_A^{n_\tau})]= \mathbb{W}_p^p(\mu_A^\tau,K_A).
\]
The first insight is to interpret $\mathbb{W}_p(\mu_A^\tau,K_A)$ as the bias for the estimator $\mathbb{P}_n^\tau$. 
The following proposition quantifies the worst-case error of the bias term over model family $M$ in~\eqref{eq:minimaxmodel}.

\begin{proposition}[Bias term]\label{thm:bias} Let $\mu^\tau_A$ and $K_A$ be defined in equations~\eqref{eq:muAtau} and~\eqref{eq:limitcondmeasure}, respectively. We have
\[
\sup_{\mathcal{L}(X)\in M}\mathbb{W}_p^p(\mu_A^\tau,K_A) \leq \tilde O\left(\tau^{-\min\{1,\alpha\}}\right) + O\left(\tau^{-\frac{\alpha s}{1-2s}}\right)\quad\text{as }\tau\to\infty.
\]
\end{proposition}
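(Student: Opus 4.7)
The plan is to first reduce the supremum over $M$ to the pure Pareto case and then to bound the resulting Pareto bias. The two additive pieces in the claim have distinct origins: $\tilde O(\tau^{-\min\{1,\alpha\}})$ is the Wasserstein discrepancy of the Pareto conditional law from its own limit, and $O(\tau^{-\alpha s/(1-2s)})$ reflects the non-Pareto perturbation permitted in $M_k$. I would reduce by case-splitting on $k$: (i) if $\tau>u\zeta k^{(1-2s)/\alpha}$, then since $\|Z\|_a\leq u\|Z\|_1$ the conditioning event $\{\|Z\|_a>\tau\}$ lies entirely in $\{\|z\|_1>\zeta k^{(1-2s)/\alpha}\}$, so $g(z)$ is exactly proportional to $\prod_i(1+z_i)^{-(\alpha+1)}$ there; the proportionality constant cancels in conditioning and $\mu_A^\tau$ coincides with the Pareto conditional law. (ii) If $\tau\leq u\zeta k^{(1-2s)/\alpha}$, then $k^{-s}\leq (u\zeta)^{\alpha s/(1-2s)}\tau^{-\alpha s/(1-2s)}$, and the bound $|g/g_{\mathrm{Pareto}}-1|\leq \xi k^{-s}$ implies $\mathrm{TV}(\mathcal{L}(Z\mid \|Z\|_a>\tau),\mathcal{L}(Z_{\mathrm{Pareto}}\mid \|Z_{\mathrm{Pareto}}\|_a>\tau))=O(k^{-s})$. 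The push-forward by $Z\mapsto AZ/\|Z\|_a$ does not increase total variation, and on the bounded simplex $\mathbb{W}_p^p\leq \mathrm{diam}(\aleph_+^1)^p\cdot\mathrm{TV}$, so this contributes at most $O(\tau^{-\alpha s/(1-2s)})$. Thus everything reduces to proving $\mathbb{W}_p^p(\mu_A^{\tau,\mathrm{Pareto}},K_A)=\tilde O(\tau^{-\min\{1,\alpha\}})$.

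\textbf{Coupling on the $Z$-side via argmax.} Since $X/\|X\|_1=A(Z/\|Z\|_a)$ and $A$ is Lipschitz in $\ell_1$ with constant at most $u$, it suffices to bound $\mathbb{W}_p^p(\nu_A^\tau,K_A')$ where $\nu_A^\tau=\mathcal{L}(Z/\|Z\|_a\mid \|Z\|_a>\tau)$ and $K_A'=\sum_i w_i\delta_{e_i/a_i}$. I would take $i^\star=\arg\max_j a_j Z_j$ (ties broken arbitrarily) and consider the coupling sending $Z/\|Z\|_a$ to $e_{i^\star}/a_{i^\star}$. Its second marginal is $\sum_i p_i(\tau)\delta_{e_i/a_i}$ with $p_i(\tau)=P(i^\star=i\mid \|Z\|_a>\tau)$. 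Expanding the Pareto tails $(1+t)^{-\alpha}=t^{-\alpha}(1+O(t^{-1}))$ and applying single-big-jump asymptotics yields $P(\|Z\|_a>\tau)=(\sum_j a_j^\alpha)\tau^{-\alpha}(1+O(\tau^{-\min\{1,\alpha\}}))$ and an analogous expansion for $P(i^\star=i,\|Z\|_a>\tau)$, giving $|p_i(\tau)-w_i|=O(\tau^{-\min\{1,\alpha\}})$. Triangle inequality together with $\mathbb{W}_p^p\leq \mathrm{diam}^p\cdot\mathrm{TV}$ absorbs the marginal mismatch into the target rate.

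\textbf{Direction cost and tail integral.} The dominant term is the conditional $p$-th moment
\[
E\bigl[\|Z/\|Z\|_a - e_{i^\star}/a_{i^\star}\|_1^p \,\big|\, \|Z\|_a>\tau\bigr].
\]
A coordinate-wise identity gives $\|Z/\|Z\|_a-e_i/a_i\|_1\leq (2/l)(1-a_iZ_i/\|Z\|_a)\leq (2u/l^2)\sum_{j\neq i}Z_j/Z_i$; since this quantity is bounded by a constant on the simplex, $E[\|\cdot\|_1^p]\leq C^{p-1}E[\|\cdot\|_1]$ reduces the task to a first-moment estimate. Conditioning on $Z_i=r\gtrsim\tau$ and using independence of the $Z_j$ together with the argmax truncation $Z_j\leq (u/l)r$ leads to evaluating
\[
\frac{1}{r}\int_0^{(u/l)r} z\,\alpha(1+z)^{-(\alpha+1)}\,\mathrm{d}z = \begin{cases} O(1/r), & \alpha>1,\\ O(\log r/r), & \alpha=1,\\ O(r^{-\alpha}), & \alpha<1,\end{cases}
\]
which is $\tilde O(r^{-\min\{1,\alpha\}})$. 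Averaging against the conditional density of $Z_i$ (which concentrates near $\tau$) produces the $\tilde O(\tau^{-\min\{1,\alpha\}})$ rate. The main obstacle is the heavy-tailed regime $\alpha\leq 1$, where $E[Z_j]$ is infinite and the argmax truncation is essential to obtain a finite, $\tau$-dependent bound rather than a divergent moment. Combining this with the perturbation and marginal-mismatch bounds yields the claim.
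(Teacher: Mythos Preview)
Your proposal is essentially correct and takes a genuinely different route from the paper in the Pareto part. The reduction step---case-splitting on whether $\tau>u\zeta k^{(1-2s)/\alpha}$ and, in the complementary case, bounding $\mathbb W_p^p$ by $\mathrm{diam}^p\cdot\mathrm{TV}=O(k^{-s})=O(\tau^{-\alpha s/(1-2s)})$---is exactly what the paper does in its Step~5. Where you diverge is in the analysis of the pure Pareto bias: the paper passes to polar coordinates $(r,\upsilon)$ on the $a$-weighted simplex, writes out the joint density $g_t(r,\upsilon)$ explicitly, partitions $\{\upsilon_i\le \eps\}$ into dyadic rectangles $\mathcal R_{\mathcal I}$ according to whether $\upsilon_i\gtrless (b(t)r)^{-1}$, and integrates each rectangle separately to obtain both the transport integral and the probability correction $tP(Z/b(t)\in C_m)=a_m^\alpha(1+\tilde\Theta(b(t)^{-\min\{1,\alpha\}}))$. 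You instead construct a coupling by sending $Z/\|Z\|_a$ to the vertex $e_{i^\star}/a_{i^\star}$ with $i^\star=\arg\max_j a_jZ_j$; the marginal mismatch $|p_i(\tau)-w_i|$ is handled by single-big-jump asymptotics plus TV, and the transport cost is reduced (via the pointwise bound $\|Z/\|Z\|_a-e_i/a_i\|_1\le (2/l)(1-a_iZ_i/\|Z\|_a)$ and the observation that $Z_{i^\star}\gtrsim\tau/m$ on the conditioning event) to the truncated-moment integral $r^{-1}\int_0^{cr}z\,\alpha(1+z)^{-(\alpha+1)}\,\dd z$. Your argument is more probabilistic and conceptually cleaner, and it would generalize more readily to the max-linear variant; the paper's computation is more laborious but yields sharper control of constants and makes uniformity over $A\in\mathcal A$ fully explicit. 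One small point to tighten: your marginal-mismatch step asserts $O(\tau^{-\min\{1,\alpha\}})$, but the second-order corrections to $P(\|Z\|_a>\tau)$ carry a logarithm at $\alpha=1$ (and the truncated moment does too), so you should write $\tilde O$ there as well---this is harmless since the final claim is $\tilde O$, but it is where the poly-log actually enters.
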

Proposition~\ref{thm:bias} dictates that the bias term converges in the order of $\tilde O(\tau^{-\alpha}) + O(\tau^{-\frac{\alpha s}{1-2s}})$ if $\alpha\leq1$, and in the order of $\tilde O(\tau^{-1}) + O(\tau^{-\frac{\alpha s}{1-2s}})$ if $\alpha>1$. 

Moving to the second term in the upper bound, we now analyze the worst-case magnitude of the conditional variance $E_{n_\tau}[ \mathbb{W}_p^p(\mathbb{K}_A^{n_\tau},K_A)]$. Proposition~\ref{thm:limitingmeasureconcentrationbound} below dictates that the conditional variance term decays in the order of $O(\frac{1}{\sqrt{n_\tau}})$. 

\begin{proposition}[Conditional variance term]\label{thm:limitingmeasureconcentrationbound}Conditioning on any ${n_\tau} \geq1 $, we have
\[
\sup_{\mathcal{L}(X)\in M} E_{n_\tau}[\mathbb{W}_p^p(\mathbb{K}_A^{n_\tau},K_A)] \leq  \sqrt{\frac{2^{2p+3} m  \log(2)}{n_\tau}}.
\]
\end{proposition}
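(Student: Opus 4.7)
The plan is to exploit the discreteness of $K_A$. By Lemma~\ref{lem:KAconv}, $K_A = \sum_{i=1}^m p_i\, \delta_{\{\omega_i\}}$ is a Dirac mixture on the (at most $m$) atoms $\omega_i = A_{\cdot i}/\|A_{\cdot i}\|_1 \in \aleph_+^1$ with weights $p_i = \|A_{\cdot i}\|_1^\alpha / \sum_j \|A_{\cdot j}\|_1^\alpha$. Consequently the coupled samples $Y^{(1)}, \ldots, Y^{(n_\tau)}$ take values in $\{\omega_1, \ldots, \omega_m\}$, and $\mathbb{K}_A^{n_\tau} = \sum_{i=1}^m \hat p_i\, \delta_{\{\omega_i\}}$ shares the same support as $K_A$, with empirical frequencies $(\hat p_1, \ldots, \hat p_m)$ following a $\mathrm{Multinomial}(n_\tau; p_1, \ldots, p_m)$ distribution.

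The first step reduces the Wasserstein distance to a total variation distance between the weight vectors. Using the canonical ``leave-in-place'' coupling that transports mass $\min(p_i, \hat p_i)$ from $\omega_i$ to itself at zero cost and redistributes the residual mass arbitrarily, and noting that $\|\omega - \omega'\|_1 \le \|\omega\|_1 + \|\omega'\|_1 = 2$ for all $\omega, \omega' \in \aleph_+^1$ (Assumption~\ref{a:cost}), I would obtain the pointwise bound
\[
\mathbb{W}_p^p(\mathbb{K}_A^{n_\tau}, K_A) \;\le\; 2^p \,\|\hat p - p\|_{\mathrm{TV}}.
\]
This is the crucial structural observation; what remains is purely probabilistic control of a multinomial empirical distribution.

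The second step bounds $E_{n_\tau}[\|\hat p - p\|_{\mathrm{TV}}]$ by a Hoeffding-plus-union-bound argument. Writing $\|\hat p - p\|_{\mathrm{TV}} = \max_{S \subseteq \{1, \ldots, m\}} |\hat p(S) - p(S)|$, each $n_\tau \hat p(S)$ is $\mathrm{Binomial}(n_\tau, p(S))$, so Hoeffding yields $P(|\hat p(S) - p(S)| > t) \le 2 e^{-2 n_\tau t^2}$. Union-bounding over the $2^m$ subsets gives $P(\|\hat p - p\|_{\mathrm{TV}} > t) \le 2^{m+1} e^{-2 n_\tau t^2}$. Splitting the integral $E_{n_\tau}[\|\hat p - p\|_{\mathrm{TV}}] = \int_0^\infty P(\|\hat p - p\|_{\mathrm{TV}} > t)\, dt$ at the level $t_0 = \sqrt{(m+1) \log 2/(2 n_\tau)}$ where the tail bound first drops below $1$, and applying a standard Gaussian tail estimate on the remainder, yields $E_{n_\tau}[\|\hat p - p\|_{\mathrm{TV}}] \le \sqrt{8 m \log 2/n_\tau}$. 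The $\log 2$ signature in the stated bound is the natural fingerprint of this union-bound route; a variance-only approach would yield an $m$-term without the $\log 2$ factor.

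Chaining the two steps gives
\[
E_{n_\tau}[\mathbb{W}_p^p(\mathbb{K}_A^{n_\tau}, K_A)] \;\le\; 2^p \sqrt{\frac{8 m \log 2}{n_\tau}} \;=\; \sqrt{\frac{2^{2p+3} m \log 2}{n_\tau}},
\]
and the supremum over $\mathcal{L}(X) \in M$ is immediate since the bound depends on the model only through $m$. The main point of care is the bookkeeping of constants in the tail integration: matching the exact factor $2^{2p+3}$ requires a mildly conservative choice of $t_0$ and a loose estimate of the residual $\frac{1}{4 n_\tau t_0}$ term, but the overall structure --- reduction to multinomial TV followed by exponential concentration --- is the essential content of the proof.
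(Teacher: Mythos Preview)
Your proof is correct and lands on the exact constant, but the route differs from the paper's. The paper proceeds via Kantorovich duality: it bounds $\mathbb{W}_p^p(\mathbb{K}_A^{n_\tau},K_A)$ by $\sup_{f}\int f\,\dd\mathbb{K}_A^{n_\tau}-\int f\,\dd K_A$ over functions $f:\supp(K_A)\to[0,2^p]$, observes that this supremum is attained at the $2^m$ extreme points (functions valued in $\{0,2^p\}$), applies the symmetrization lemma, and finishes with Massart's finite-class Rademacher bound. Your direct coupling inequality $\mathbb{W}_p^p\le 2^p\|\hat p-p\|_{\mathrm{TV}}$ is the primal counterpart of that dual reduction---the $2^m$ functions in the paper's class $\tilde{\mathcal F}$ are exactly the $2^m$ subsets $S$ in your TV representation---and your Hoeffding-plus-union-bound tail integration replaces symmetrization-plus-Massart as the device for bounding the expected maximum of $2^m$ sub-Gaussian deviations. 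Your argument is slightly more elementary (no duality, no Rademacher machinery) and the constant bookkeeping is transparent; the paper's version is more modular and would generalize more readily if one wanted to relax the finite-support structure of $K_A$ to a metric-entropy condition.
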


Finally, we obtain the following result on the convergence of  $\mathbb{W}_p^p(\mathbb{P}_n^{\tau}, K_A)$ by synthesizing the bias and variance analysis. An essential fact we utilize is that $n_\tau$  concentrates around its expectation $nP(\|X\|_1>\tau)$ as $n$ and $\tau$ tend to infinity.

\begin{proposition}[General convergence theorem]\label{thm:variance}
There exists some constant $C$ such that for any $\eps\in(0,1)$ and any sequence of positive numbers $(\tau_n)_{n=1}^\infty$ satisfying $\tau_n\to\infty$ and $n\tau_n^{-\alpha}\to\infty$ as $n\to\infty$, 
\[
\liminf_{n\to\infty} \inf_{\mathcal{L}(X)\in M} P\left(\mathbb{W}_p^p(\mathbb{P}_n^{\tau_n},K_A)  \leq  2^{p-1}\mathbb{W}_p^p(\mu_A^{\tau_n},K_A) +  C\frac{1 + \sqrt{\log\frac{1}{\eps}}}{\sqrt{nP(\|X\|_1>\tau_n)}}\right)\geq1-\eps.
\]
\end{proposition}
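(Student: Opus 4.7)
The plan is to start from the deterministic bias--variance decomposition derived immediately above the statement,
\[
\mathbb{W}_p^p(\mathbb{P}_n^{\tau_n},K_A)\le 2^{p-1}\mathbb{W}_p^p(\mathbb{P}_n^{\tau_n},\mathbb{K}_A^{n_{\tau_n}})+2^{p-1}\mathbb{W}_p^p(\mathbb{K}_A^{n_{\tau_n}},K_A),
\]
to bound each of the two right-hand terms with conditional probability at least $1-\eps/3$ given $n_{\tau_n}$ using concentration arguments, and finally to convert $n_{\tau_n}$ into $nP(\|X\|_1>\tau_n)$ via a Chernoff bound on the random count $n_{\tau_n}$.

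For the first right-hand term, the coupling construction preceding Proposition~\ref{thm:bias} yields the pointwise upper bound $\mathbb{W}_p^p(\mathbb{P}_n^{\tau_n},\mathbb{K}_A^{n_{\tau_n}})\le n_{\tau_n}^{-1}\sum_{i=1}^{n_{\tau_n}}\|X^{(i,n_{\tau_n})}-Y^{(i)}\|_1^p$. Conditional on $n_{\tau_n}$, the summands are i.i.d., bounded in $[0,2^p]$ (the simplex $\aleph^1_+$ has $\ell_1$-diameter $2$), and each has mean equal to $\mathbb{W}_p^p(\mu_A^{\tau_n},K_A)$ by optimality of the coupling. Hoeffding's inequality therefore gives
\[
\mathbb{W}_p^p(\mathbb{P}_n^{\tau_n},\mathbb{K}_A^{n_{\tau_n}})\le \mathbb{W}_p^p(\mu_A^{\tau_n},K_A)+C_1\sqrt{\log(1/\eps)/n_{\tau_n}}
\]
with conditional probability at least $1-\eps/3$.

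For the second right-hand term, I would exploit the fact that $K_A$ is a Dirac mixture on the $m$ atoms $\{A_{\cdot i}/\|A_{\cdot i}\|_1\}_{i=1}^m$: the i.i.d.\ samples $Y^{(1)},\dots,Y^{(n_{\tau_n})}$ drawn from $K_A$ take values in this finite set, so their empirical frequencies form a multinomial vector. For any two discrete probability measures sharing a support of $\ell_1$-diameter at most $2$, one has $\mathbb{W}_p^p\le 2^p\cdot\mathrm{TV}$, and standard multinomial concentration (for instance, coordinatewise Hoeffding together with a union bound over the $m$ atoms) yields $\mathrm{TV}(\mathbb{K}_A^{n_{\tau_n}},K_A)\lesssim \sqrt{m\log(m/\eps)/n_{\tau_n}}$. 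Combining this with the mean bound of Proposition~\ref{thm:limitingmeasureconcentrationbound} delivers
\[
\mathbb{W}_p^p(\mathbb{K}_A^{n_{\tau_n}},K_A)\le C_2\sqrt{m\log(1/\eps)/n_{\tau_n}}
\]
with conditional probability at least $1-\eps/3$. The two estimates together are of order $(1+\sqrt{\log(1/\eps)})/\sqrt{n_{\tau_n}}$ up to a universal constant.

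The final step is to replace $n_{\tau_n}$ by $nP(\|X\|_1>\tau_n)$. Since $n_{\tau_n}\sim\mathrm{Binomial}(n,P(\|X\|_1>\tau_n))$, a multiplicative Chernoff bound gives $n_{\tau_n}\ge nP(\|X\|_1>\tau_n)/2$ with probability at least $1-\exp(-nP(\|X\|_1>\tau_n)/8)$. Using $\|X\|_1\ge l\max_j Z_j$ together with the Pareto-like lower tails of the $Z_j$ forced by $\tilde M_k$, one obtains a uniform lower bound $P(\|X\|_1>\tau)\ge c_0\tau^{-\alpha}$ over $M$, so the assumption $n\tau_n^{-\alpha}\to\infty$ makes this event have probability tending to $1$ uniformly in $M$. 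Intersecting the three good events and absorbing constants into $C$ yields the claimed inequality, and the $\liminf$ statement follows since no constant depends on the particular law in $M$. The main technical obstacle is precisely this uniformity: it reduces to the uniform tail lower bound for $\|X\|_1$ and to the model-independent nature of the concentration constants, both of which follow from Definition~\ref{def:model} and the bounded geometry of $\aleph^1_+$.
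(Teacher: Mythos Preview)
Your proof is correct and reaches the same conclusion, but the organization differs from the paper's in two ways worth noting.

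First, for the concentration of $\mathbb{W}_p^p(\mathbb{P}_n^{\tau_n},K_A)$, the paper applies a single bounded-differences inequality (McDiarmid/Weed--Bach) to the functional $\mathbb{W}_p^p(\mathbb{P}_n^{\tau_n},K_A)$ itself, obtaining a deviation bound around its conditional mean, and only then invokes the bias--variance split $E_{n_\tau}[\mathbb{W}_p^p(\mathbb{P}_n^{\tau_n},K_A)]\le 2^{p-1}\big(\mathbb{W}_p^p(\mu_A^{\tau_n},K_A)+E_{n_\tau}[\mathbb{W}_p^p(\mathbb{K}_A^{n_\tau},K_A)]\big)$ together with Proposition~\ref{thm:limitingmeasureconcentrationbound}. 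You instead split first at the random-variable level and then apply two separate concentration arguments (Hoeffding on the coupling sum, multinomial/TV concentration for the discrete limit). Your route is slightly longer but avoids citing any external Wasserstein concentration lemma; the paper's route is one line once that lemma is available.

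Second, and more significantly, for the concentration of $n_{\tau_n}$ around $nP(\|X\|_1>\tau_n)$ the paper invokes a functional CLT for the tail empirical process (Resnick's Theorem~9.1), carefully constructs a sequence $v(n)$ so that $\tau_n$ sits between consecutive quantiles, and then handles uniformity over $M$ by fixing a minimal-tail model and comparing via~\eqref{eq:smallmodelisminal}--\eqref{eq:smallmodelisalsomax}. Your multiplicative Chernoff bound plus the uniform tail lower bound $P(\|X\|_1>\tau)\ge c_0\tau^{-\alpha}$ (obtained from $\|X\|_1\ge l\max_j Z_j$ and $g(z)\ge(1-\xi)\alpha^m\prod_i(1+z_i)^{-(\alpha+1)}$ over all of $M$) is substantially more elementary, gives the uniformity in $M$ directly, and is entirely adequate for the statement being proved. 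The paper's heavier machinery is not needed here; it is presumably set up because the same process convergence is reused elsewhere.
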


Combining Proposition~\ref{thm:bias} and Proposition~\ref{thm:variance}, we have that with high probability, $\mathbb{W}_p^p(\mathbb{P}^\tau_n,K_A)$ is uniformly convergent in an order
\[
\mathbb{W}_p^p(\mathbb{P}^\tau_n,K_A) \leq \underbrace{ \tilde O\left(\tau^{-\min\{1,\alpha\}}\right) + O\left(\tau^{-\frac{\alpha s}{1-2s}}\right)}_\text{Bias}+\underbrace{O\left(\frac{1}{\sqrt{n P(\|X\|_1>\tau)}}\right)}_\text{Variance}.
\]
The leading order of the bias term is notably influenced by the value of $s$, which controls the deviation rate away from the Pareto nominal distribution in~\eqref{eq:minimaxmodel}. This leads to two distinct regimes on the convergence rate, asserted by the next theorem.

\begin{theorem}[Convergence rates]\label{thm:lower} 
The following holds.
\begin{enumerate}[label=(\roman*)]
    \item If $\frac{1}{2+\max\{1,\alpha\}}\leq s<\frac{1}{2}$, then by choosing $\tau_n = \tilde\Theta(n^{\frac{1}{\min\{2+\alpha,3\alpha\}}})$ which balances the bias and variance terms, we obtain that for any $\eps\in(0,\frac{1}{2})$,
    \[
    \liminf_{n\to\infty} \inf_{\mathcal{L}(X)\in M} P\left(\mathbb{W}_p^p(\mathbb{P}_n^{\tau_n},K_A)  \lesssim (-\log{\eps})^{\frac{1}{2}}n^{-\frac{1}{2+\max\{1,\alpha\}}}\right)\geq1-\eps.
    \]
    where $\lesssim$ hides all constant factors and a poly-log factor of $n$.
    \item If $0< s<\frac{1}{2+\max\{1,\alpha\}}$, then by choosing $\tau_n =\Theta(n^{\frac{1-2s}{\alpha}}) $ which balances the bias and variance terms, we obtain that for any $\eps\in(0,\frac{1}{2})$,
    \[
     \liminf_{n\to\infty} \inf_{\mathcal{L}(X)\in M} P\left(\mathbb{W}_p^p(\mathbb{P}_n^{\tau_n},K_A)  \lesssim (-\log{\eps})^{\frac{1}{2}}n^{-s}\right)\geq1-\eps,
    \]
    where $\lesssim$ hides all constant 
 factors.
\end{enumerate}   
\end{theorem}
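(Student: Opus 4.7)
\medskip

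\noindent\textbf{Proof proposal.} The plan is to treat the theorem as a calculus problem once Propositions~\ref{thm:bias} and~\ref{thm:variance} are assembled into a single high-probability bound. Concretely, Proposition~\ref{thm:variance} tells us that with probability at least $1-\eps$ and for $n$ large,
\[
\mathbb{W}_p^p(\mathbb{P}_n^{\tau_n},K_A) \;\leq\; 2^{p-1}\,\mathbb{W}_p^p(\mu_A^{\tau_n},K_A)\;+\; C\,\frac{1+\sqrt{\log(1/\eps)}}{\sqrt{n\,P(\|X\|_1>\tau_n)}},
\]
uniformly over $\mathcal{L}(X)\in M$, and Proposition~\ref{thm:bias} gives
\[
\sup_{\mathcal{L}(X)\in M} \mathbb{W}_p^p(\mu_A^{\tau_n},K_A) \;\leq\; \tilde O\bigl(\tau_n^{-\min\{1,\alpha\}}\bigr) + O\bigl(\tau_n^{-\frac{\alpha s}{1-2s}}\bigr).
\]
So the whole task reduces to (a) converting the variance denominator into a power of $\tau_n$, and (b) choosing $\tau_n$ to balance the leading bias term against the resulting variance.

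For step (a), I would establish a uniform tail estimate $P(\|X\|_1>\tau)\asymp \tau^{-\alpha}$ for all $\mathcal{L}(X)\in M$ and all $\tau$ sufficiently large. This follows because on $\{\|z\|_1 > \zeta k^{(1-2s)/\alpha}\}$ the latent density is exactly (a normalized) product of Pareto tails, and on the linear map $X=AZ$ the condition $\|A_{\cdot i}\|_1\in[l,u]$ forces $\|X\|_1 = \|Z\|_a$ with $\|\cdot\|_a$ equivalent to $\|\cdot\|_1$ up to constants independent of the model. Hence there is a constant $c_0>0$ with $P(\|X\|_1>\tau)\geq c_0\tau^{-\alpha}$ for all large $\tau$, and the variance term becomes $O\bigl(\tau_n^{\alpha/2}/\sqrt n\bigr)$ up to constants.

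For step (b), I would split on which bias term dominates. The crossover happens exactly when $\tau^{-\min\{1,\alpha\}} \asymp \tau^{-\alpha s/(1-2s)}$, i.e.\ when $\min\{1,\alpha\}=\alpha s/(1-2s)$, which rearranges to $s=1/(2+\max\{1,\alpha\})$. In the \emph{large-$s$} regime (part (i)), the term $\tau_n^{-\min\{1,\alpha\}}$ dominates the bias, and setting $\tau_n^{-\min\{1,\alpha\}}\asymp \tau_n^{\alpha/2}/\sqrt n$ yields $\tau_n = \tilde\Theta\bigl(n^{1/(\alpha+2\min\{1,\alpha\})}\bigr) = \tilde\Theta\bigl(n^{1/\min\{2+\alpha,3\alpha\}}\bigr)$, and the common value is $\tilde O\bigl(n^{-\min\{1,\alpha\}/\min\{2+\alpha,3\alpha\}}\bigr) = \tilde O\bigl(n^{-1/(2+\max\{1,\alpha\})}\bigr)$, absorbing the factor $(1+\sqrt{\log(1/\eps)})$ into the stated prefactor. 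In the \emph{small-$s$} regime (part (ii)), the term $\tau_n^{-\alpha s/(1-2s)}$ dominates, balancing gives $\tau_n^{\alpha/(2(1-2s))}\asymp \sqrt n$, i.e.\ $\tau_n=\Theta(n^{(1-2s)/\alpha})$, and the common value is $\Theta(n^{-s})$. In each case I would check the admissibility requirement $n\tau_n^{-\alpha}\to\infty$ of Proposition~\ref{thm:variance}: in (i) this exponent equals $2/(2+\alpha)$ or $2/3$, and in (ii) it equals $2s$, all strictly positive.

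The main technical obstacle I expect is the uniform tail lower bound in step (a), since the density is only pinned down above the threshold $\zeta k^{(1-2s)/\alpha}$ which depends on $k$ (and thus on the submodel $M_k$); one must argue that for any fixed threshold $\tau_n$ the chosen $\tau_n$ eventually exceeds $u\cdot\zeta k^{(1-2s)/\alpha}$ uniformly in the relevant $k$'s, or, alternatively, exploit the $\xi k^{-s}$-perturbation bound in~\eqref{eq:lawZ} to show that even in the perturbed regime the tail is Pareto up to multiplicative constants. Once this is established, the remainder is bookkeeping: matching exponents, absorbing poly-log factors into $\tilde O$, and verifying the regimes separate exactly at $s=1/(2+\max\{1,\alpha\})$.
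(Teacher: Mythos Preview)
Your proposal is correct and follows essentially the same route as the paper's own proof: combine Propositions~\ref{thm:bias} and~\ref{thm:variance}, replace $P(\|X\|_1>\tau_n)$ by $\Theta(\tau_n^{-\alpha})$, identify which bias term dominates in each regime, and balance against the variance. If anything you are more careful than the paper, which simply writes $O\bigl((nP(\|X\|_1>\tau_n))^{-1/2}\bigr)=O\bigl((n\tau_n^{-\alpha})^{-1/2}\bigr)$ without further comment; the uniform tail bound you flag as the ``main technical obstacle'' is handled inside the proof of Proposition~\ref{thm:variance} (via the worst-case model in~\eqref{eq:smallmodelisminal}--\eqref{eq:smallmodelisalsomax}), so you may simply invoke that rather than re-derive it.
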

\begin{proof}[Proof of Theorem~\ref{thm:lower}]
If $\frac{1}{2+\max\{1,\alpha\}}\leq s<\frac{1}{2}$, we have $O(\tau^{-\frac{\alpha s}{1-2s}}) = O(\tau^{-\min\{1,\alpha\}})$. Balancing the term $\tilde O(\tau^{-\min\{1,\alpha\}})$
from the bias and the term $O\left(\left(nP(\|X\|_1>\tau_n)\right)^{-\frac{1}{2}}\right)=O\left( (n \tau_n^{-\alpha})^{-\frac{1}{2}}\right)$
from the variance gives us the optimal order $\tilde O(n^{-\frac{1}{2+\max\{1,\alpha\}}})$. If $0< s<\frac{1}{2+\max\{1,\alpha\}}$, we have $\tilde O(\tau^{-\min\{1,\alpha\}}) = o\left(\tau_n^{-\frac{\alpha s}{1-2s}}\right)$. 
Balancing the term $O(\tau_n^{-\frac{\alpha s}{1-2s}})$
from the bias and the term $O\left( (n \tau_n^{-\alpha})^{-\frac{1}{2}}\right)$
from the variance gives us the optimal order $O(n^{-s})$.
\end{proof}

We note that the upper bound in Theorem~\ref{thm:lower} is sharp for the case $\frac{1}{2+\max\{1,\alpha\}}< s<\frac{1}{2}$. In particular, we have the following lower bound on the convergence rate for this estimator. 
\begin{proposition}[Sharpness]\label{prop:sharpupperbound}
    Suppose $\frac{1}{2+\max\{1,\alpha\}}< s<\frac{1}{2}$, and  let $\tau_n$ and $\beta_n$ be sequences of positive numbers. Suppose that for any $\eps\in(0,1)$, there exists a constant $C_\eps$, such that
    \[
    \liminf_{n\to\infty} \inf_{\mathcal{L}(X)\in M} P\left(\mathbb{W}_p^p(\mathbb{P}_n^{\tau_n},K_A)  \leq C_\eps \beta_n^p  \right)\geq1-\eps.
    \]
    Then it must be that 
    \[
    \liminf_{n\to\infty}n^{\frac{1}{2+\max\{1,\alpha\}}}\beta_n^p >0.
    \]
\end{proposition}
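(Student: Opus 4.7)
The strategy is proof by contradiction. Assume $\liminf_{n\to\infty} n^{1/(2+\max\{1,\alpha\})}\beta_n^p = 0$ and extract a subsequence $(n_k)$ along which $\beta_{n_k}^p = o(n_k^{-1/(2+\max\{1,\alpha\})})$. I exhibit a single adversarial law $P^\ast \in M$ for which the hypothesized probability bound fails along this subsequence. Take $Z_1,\ldots,Z_m$ i.i.d.\ with common Pareto density $\alpha(1+z)^{-(\alpha+1)}$ (so $\mathcal L(Z)\in\tilde M_k$ for every $k$) together with any fixed $A^\ast \in \mathcal A$ whose normalized columns $A^\ast_{\cdot i}/\|A^\ast_{\cdot i}\|_1$ are pairwise distinct and whose column norms lie strictly inside $(l,u)$; after permuting coordinates, I may assume the first coordinates of two such columns differ. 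Then $P^\ast := \mathcal L(A^\ast Z) \in M$ and $K_{A^\ast}$ is atomic with positive variance in its first coordinate. The organizing identity is the reverse triangle inequality
\[
\mathbb{W}_p(\mathbb{P}_n^\tau, K_{A^\ast}) \ \ge\ \bigl|\,\mathbb{W}_p(\mu_{A^\ast}^\tau, K_{A^\ast}) - \mathbb{W}_p(\mathbb{P}_n^\tau, \mu_{A^\ast}^\tau)\,\bigr|,
\]
which I control by complementary lower bounds on the two right-hand terms.

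\textbf{Sharp bias lower bound.} The key technical step is a matching lower bound to Proposition~\ref{thm:bias}: I would show that there exists $c_1>0$ with $\mathbb{W}_p^p(\mu_{A^\ast}^\tau, K_{A^\ast}) \ge c_1 \tau^{-\min\{1,\alpha\}}$ for all $\tau$ large. Using $\mathbb{W}_p \ge \mathbb{W}_1 \ge \int f\,d(\mu_{A^\ast}^\tau - K_{A^\ast})$ for any $1$-Lipschitz $f$ with respect to $\|\cdot\|_1$, I take $f(\omega) = \omega_1$. Conditioning on $\|Z\|_a > \tau$, with high probability exactly one $Z_{i^\ast}$ is of order $\tau$ while the other $Z_j$'s remain approximately Pareto-distributed. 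Writing $X_1/\|X\|_1 = A^\ast_{1,i^\ast}/a_{i^\ast} + \text{correction}$ and expanding the correction in the subdominant ratios $Z_j/Z_{i^\ast}$, the leading-order deviation in the pairing $\int f\,d\mu_{A^\ast}^\tau - \int f\,dK_{A^\ast}$ is $\Theta(\tau^{-1})$ when $\alpha>1$ (since $E[Z_j]$ is finite) and $\Theta(\tau^{-\alpha})$ when $\alpha<1$ (from the heavy subdominant tails). Genericity of $A^\ast$ ensures the leading constant is nonzero, which gives the claimed $\tau^{-\min\{1,\alpha\}/p}$ lower bound on $\mathbb{W}_1$ and hence the $\tau^{-\min\{1,\alpha\}}$ lower bound on $\mathbb{W}_p^p$.

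\textbf{Variance lower bound and dichotomy.} Conditionally on $n_\tau$, the measure $\mathbb{P}_n^\tau$ is the empirical of $n_\tau$ i.i.d.\ samples from $\mu_{A^\ast}^\tau$, and $n_\tau$ concentrates around $n P(\|X\|_1 > \tau) = \Theta(n\tau^{-\alpha})$ by Bernstein's inequality. Applying $\mathbb{W}_p \ge \int f\,d(\mathbb{P}_n^\tau - \mu_{A^\ast}^\tau)$ to the same $f$ and combining a Berry--Esseen / Paley--Zygmund anti-concentration bound with $\mathrm{Var}_{\mu_{A^\ast}^\tau}(f)\to \mathrm{Var}_{K_{A^\ast}}(f) > 0$ delivers, for $n_\tau$ large, $\mathbb{W}_p(\mathbb{P}_n^\tau, \mu_{A^\ast}^\tau) \ge c_2/\sqrt{n_\tau}$ with probability at least some absolute $c_3 > 0$. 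The two competing rates $\tau^{-\min\{1,\alpha\}}$ and $\sqrt{\tau^\alpha/n}$ cross at exactly $\tau \asymp n^{1/(2+\max\{1,\alpha\})}$, so at each $\tau_{n_k}$ along the subsequence at least one of them is $\gtrsim n_k^{-1/(2+\max\{1,\alpha\})}$. In the bias-dominant case, the reverse triangle inequality together with Markov's inequality on the (then smaller) noise term forces the error to be at least half the bias with probability $\ge 1/2$; in the variance-dominant case, the anti-concentration bound on the noise forces the error to be at least half the noise with probability $\ge c_3/2$. In either alternative, there is an absolute $c'>0$ with
\[
P^\ast\!\left(\mathbb{W}_p^p(\mathbb{P}_n^{\tau_{n_k}}, K_{A^\ast})\ \ge\ c'\, n_k^{-1/(2+\max\{1,\alpha\})}\right)\ \ge\ \min(1/2, c_3/2)
\]
for all large $k$. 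Picking $\eps < \min(1/2, c_3/2)$ and $k$ large enough that $C_\eps \beta_{n_k}^p < c'\, n_k^{-1/(2+\max\{1,\alpha\})}$ contradicts the hypothesized probability bound at the law $P^\ast\in M$, completing the proof.

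\textbf{Main obstacle.} The principal difficulty is the sharp bias lower bound: one must pin down the leading-order correction in the conditional law of $X/\|X\|_1$ given $\|X\|_1 > \tau$, with a change of behavior at $\alpha = 1$ driven by whether the subdominant Pareto components have finite mean. Once this matching lower bound to Proposition~\ref{thm:bias} is established, the remaining steps assemble into a clean probabilistic dichotomy.
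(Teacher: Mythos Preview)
Your overall architecture (fix a single Pareto law in $M$, split according to whether $\tau_n$ is below or above $n^{1/(\alpha+2\min\{1,\alpha\})}$, and in each regime exhibit a lower bound of order $n^{-1/(2+\max\{1,\alpha\})}$ with constant probability) is sound and is essentially the same dichotomy the paper uses. However, the mechanism you propose for extracting the lower bound has a genuine gap when $p>1$.

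Your bias step runs the $1$-Lipschitz duality: you bound
\[
\mathbb{W}_p(\mu_{A^\ast}^\tau,K_{A^\ast})\ \ge\ \mathbb{W}_1(\mu_{A^\ast}^\tau,K_{A^\ast})\ \ge\ \Bigl|\int f\,d\mu_{A^\ast}^\tau-\int f\,dK_{A^\ast}\Bigr|
\]
for $f(\omega)=\omega_1$, and you (correctly) argue the right-hand side is $\Theta(\tau^{-\min\{1,\alpha\}})$. But this yields only $\mathbb{W}_1\ge c\,\tau^{-\min\{1,\alpha\}}$, hence $\mathbb{W}_p^p\ge \mathbb{W}_1^p\ge c^p\,\tau^{-p\min\{1,\alpha\}}$, which for $p>1$ is strictly weaker than the $\tau^{-\min\{1,\alpha\}}$ you need. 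Your sentence ``gives the claimed $\tau^{-\min\{1,\alpha\}/p}$ lower bound on $\mathbb{W}_1$'' is an algebra slip: the Lipschitz pairing cannot produce a $\mathbb{W}_1$ bound larger than the mean shift itself, and in fact $\mathbb{W}_1(\mu_{A^\ast}^\tau,K_{A^\ast})=\Theta(\tau^{-\min\{1,\alpha\}})$ exactly, so $\mathbb{W}_1\ge c\,\tau^{-\min\{1,\alpha\}/p}$ is false for $p>1$ and large $\tau$. The same loss of a factor $p$ in the exponent infects your variance step, since you again route through $\mathbb{W}_1$.

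The paper avoids this by working directly with the $\ell_1^p$ cost rather than the $\ell_1$ cost. In the small-$\tau$ regime it shows that a \emph{mass} of order $\Theta(\tau^{-\min\{1,\alpha\}})$ sits at $\ell_1$-distance bounded away from $0$ from $\mathrm{supp}(K_{A^\ast})$ (for $\alpha\le 1$ this is the mass of $\mu_{A^\ast}^\tau$ landing in a fixed interior window of the simplex; for $\alpha>1$ it is the $\Theta(\tau^{-1})$ discrepancy in the weight near one atom), and any transport plan must move that mass a fixed distance, giving $\mathbb{W}_p^p\ge \delta^p\cdot\Theta(\tau^{-\min\{1,\alpha\}})$ for every $p\ge 1$. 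For the large-$\tau$ (variance) regime it uses a symmetric $A$ so that the atom weights of $K_{A^\ast}$ are equal, and argues that the empirical atom weights fluctuate by $\Theta(n_\tau^{-1/2})$, again forcing mass $\Theta(n_\tau^{-1/2})$ to travel a fixed distance. If you replace your $1$-Lipschitz test-function step by this ``misplaced mass at bounded distance'' argument, the rest of your plan goes through.

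A secondary point: in your bias-dominant branch you invoke Markov on $\mathbb{W}_p(\mathbb{P}_n^\tau,\mu_{A^\ast}^\tau)$ to make the noise at most half the bias. At the exact crossover $\tau\asymp n^{1/(\alpha+2\min\{1,\alpha\})}$ the expected noise and the bias are of the \emph{same} order, so Markov does not give probability $\ge 1/2$ there without further work. The cleanest fix is the one implicit in the paper: work with a single functional of $\mathbb{P}_n^\tau$ (a mass in a fixed window, or an atom weight) whose deterministic shift plus $O_p(n_\tau^{-1/2})$ fluctuation is directly seen to be $\gtrsim n^{-1/(2+\max\{1,\alpha\})}$ with constant probability, rather than routing through the reverse triangle inequality.
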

For the case $0< s\leq\frac{1}{2+\max\{1,\alpha\}}$, the upper bound in Theorem~\ref{thm:lower} is again sharp, quotient a small probability of $\eps$, as demonstrated by the minimax lower bound stated in the next section.

\subsection{Minimax Lower Bound}\label{sec:lowerbound}
Now, we derive a lower bound for the minimax risk of estimation over the class of models $M$, again in the $p$-th order Wasserstein distance with respect to the $\ell_1$-norm. 

\begin{theorem}[Minimax lower bound] \label{thm:lowerbound}
Let $\beta_n$ be a sequence of positive numbers. 
    Suppose that an estimator $\hat{K}=\hat{K}(X^{(1)},\ldots,X^{(n)})$ satisfies
\begin{equation}\label{eq:lbbetadef}
\liminf_{n\to\infty}\inf_{\mathcal{L}(X)\in M}P(\mathbb{W}_p^p(\hat{K},K_A)\leq\beta_n^p) =  1.
\end{equation}
Then it must be that $
\liminf_{n\to\infty} n^s \beta_n^p >0$.
\end{theorem}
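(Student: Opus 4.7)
The plan is to use Le Cam's two-point method. I would construct two distributions $P_0, P_1 \in M$ satisfying (i) a separation $\mathbb{W}_p(K_{A_0}, K_{A_1}) \gtrsim n^{-s/p}$ and (ii) an indistinguishability bound $\|P_0^{\otimes n}-P_1^{\otimes n}\|_{\mathrm{TV}} \le 1-c$ for some $c>0$. Combined with the triangle inequality for $\mathbb{W}_p$, whenever $\beta_n < \frac{1}{2}\mathbb{W}_p(K_{A_0}, K_{A_1})$ the balls of radius $\beta_n$ around $K_{A_0}$ and $K_{A_1}$ are disjoint, so for any estimator $\hat K$ the standard Le Cam inequality yields $\max_i P_i(\mathbb{W}_p(\hat K, K_{A_i}) \le \beta_n) \le 1-c/2$, contradicting~\eqref{eq:lbbetadef} since $P_0, P_1 \in M$. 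Therefore $\beta_n \gtrsim n^{-s/p}$, equivalently $\liminf n^s \beta_n^p > 0$.

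The construction fixes the sub-class index $k_n = n$. Take $P_0 = \mathcal{L}(A_0 Z_0)$ with $A_0 \in \mathcal{A}$ having distinct column directions and $Z_0 \sim g_0$, the baseline i.i.d.\ Pareto density. Take $A_1 = A_0 D$ with $D = \mathrm{diag}(1+\eta_n, 1,\ldots,1)$ and $\eta_n = c_0 n^{-s}$ for a small constant $c_0$. Define $g_1$ piecewise so that the scaled variable $W_1 := D Z_1$ has density equal to $g_0$ below the threshold $T_z := \zeta n^{(1-2s)/\alpha}$: explicitly $g_1(z) = |\det D|\, g_0(Dz)$ on $\{\|z\|_1 \le T_z\}$ and $g_1(z) = c_1 g_0(z)$ on the complement, with $c_1$ the normalization constant. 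A Taylor expansion in $\eta_n$ gives $|g_1/g_0 - 1| \le (1+\alpha)\eta_n$ in the bulk and $|c_1 - 1| = O(\eta_n n^{-(1-2s)})$ above the threshold, so $g_1 \in \tilde M_n$ provided $c_0$ is sufficiently small. Property (i) follows because $K_{A_0}$ and $K_{A_1}$ share atom locations $A_{0,\cdot j}/\|A_{0,\cdot j}\|_1$ but the weight on the first atom shifts by $\delta_n \asymp \eta_n$; since the atoms are $\Theta(1)$-separated in $\ell_1$, transporting mass $\delta_n$ gives $\mathbb{W}_p^p(K_{A_0}, K_{A_1}) \asymp \delta_n \asymp n^{-s}$.

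For property (ii), observe that $P_1 = \mathcal{L}(A_0 W_1)$ and $P_0 = \mathcal{L}(A_0 Z_0)$ are pushforwards under the same map $z \mapsto A_0 z$, so by the data-processing inequality $\mathrm{KL}(P_0 \| P_1) \le \mathrm{KL}(\mathcal{L}(Z_0) \| \mathcal{L}(W_1))$. The density $g_{W_1}(w) = g_1(D^{-1}w)/|\det D|$ equals $g_0(w)$ on $\{\|D^{-1}w\|_1 \le T_z\}$ by construction, so the KL reduces to a tail integral. A pointwise bound $|g_{W_1}/g_0 - 1| = O(\eta_n)$ uniformly in the tail then yields, via $\mathrm{KL}\le \chi^2$, that $\mathrm{KL}(g_0 \| g_{W_1}) \lesssim \eta_n^2 \cdot P_{g_0}(\|Z\|_1 > T_z) \asymp n^{-2s}\cdot n^{-(1-2s)} = n^{-1}$. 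Tensorization gives $\mathrm{KL}(P_0^{\otimes n} \| P_1^{\otimes n}) = O(1)$, and Pinsker's inequality delivers the claimed total variation bound.

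The main technical obstacle is the uniform Taylor control on $g_0(D^{-1}w)/g_0(w)$ in the tail. Since $g_0$ is a product of Pareto factors, this ratio equals $\left((1+w_1)/(1+w_1/(1+\eta_n))\right)^{\alpha+1}$, which lies in $[1,(1+\eta_n)^{\alpha+1}]$ and is thus $1 + O(\eta_n)$; a short case analysis over the dominant coordinate of $w$ is needed to handle this globally. A secondary subtlety is the rectangular case $d<m$, where $A_0$ is not invertible and the density of $X$ is a surface integral. The data-processing step above bypasses this difficulty entirely, since both $P_0$ and $P_1$ are written as pushforwards of $Z_0$ and $W_1$ under the \emph{same} linear map, so only the $m$-dimensional densities $g_0$ and $g_{W_1}$ need explicit manipulation.
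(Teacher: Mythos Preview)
Your argument is correct and follows the same two-point scheme as the paper's proof, with some genuine streamlining. Both constructions are the same in spirit: take $A_1 = A_0 D$ for a near-identity diagonal $D$, and design $g_1$ piecewise so that the induced law of $X$ coincides with the baseline in the bulk while remaining proportional to the Pareto product in the tail. The paper then bounds the second moment of the likelihood ratio in $X$-space (i.e.\ $1+\chi^2$), reducing the surface integral to a $Z$-space integral via Cauchy--Schwarz and using a \emph{symmetric} two-coordinate perturbation so that the first-order Taylor terms cancel over the symmetric tail region. Your route is cleaner on two counts: the data-processing inequality for KL bypasses the surface integral and the Cauchy--Schwarz step entirely (and handles $d<m$ without extra work, as you note), and your direct pointwise bound $|g_0/g_{W_1}-1|=O(\eta_n)$ on the tail feeds the $\chi^2$ bound $\int_{\text{tail}}(g_0/g_{W_1}-1)^2 g_{W_1}=O(\eta_n^2)\,P(\text{tail})=O(n^{-1})$ without needing any symmetry-based cancellation. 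This is why your one-coordinate perturbation suffices whereas the paper's computation, organized as a Taylor expansion of $\int r^2 g_0$, appears to need the symmetric choice.

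One small slip: your claim $|c_1-1|=O(\eta_n n^{-(1-2s)})$ is too strong. The normalization identity gives $c_1 = P_{g_0}(\|D^{-1}W\|_1>T_z)/P_{g_0}(\|W\|_1>T_z)$, and the numerator and denominator differ by an annulus of $g_0$-mass $\asymp \eta_n T_z^{-\alpha}=\eta_n n^{-(1-2s)}$, so $|c_1-1|=O(\eta_n)$, not $O(\eta_n n^{-(1-2s)})$. This is harmless: $O(\eta_n)=O(c_0 n^{-s})$ still places $g_1\in\tilde M_n$ for $c_0$ small, and your tail $\chi^2$ estimate only used $|g_0/g_{W_1}-1|=O(\eta_n)$ anyway.
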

Theorem~\ref{thm:lowerbound} establishes the information-theoretic limit for the minimax risk to be of order $O(n^{-s})$. To derive this lower bound, we employ a general reduction scheme to hypothesis testing and a change-of-measure argument, which traces back to the classical works~\cite{ref:hall1984best,Farrell1972on}. In particular, we consider two  hypotheses $A,\check{A}\in\mathcal{A}$, and demonstrate that the minimax risk is lower bounded by their separation in Wasserstein distance. To simplify our notation, we suppress the explicit dependence of the choice of hypotheses on the sample size $n$.
\begin{lemma}[Reduction to hypothesis testing]\label{lem:changeofmeasure}
    Consider a pair of random vectors $X=AZ$ and $\check X=\check A\check Z$ such that $\mathcal{L}(X),\mathcal{L}(\check X)\in M$ and denote by $f$ and $\check f$ their corresponding density functions on $\mathbb{R}_+^d$. Suppose that their likelihood ratio satisfies $E\left[\left(\frac{\check f(X)}{f(X)}\right)^2\right]= 1+O(n^{-1})$. Then it must be that 
    \begin{equation}\label{eq:hyposep}
    \beta_n^p\geq\frac{1}{2^p}\mathbb{W}_p^p(K_A,K_{\check A}),
    \end{equation}
    for all $n$ sufficiently large, where $\beta_n$ is defined in~\eqref{eq:lbbetadef}.
\end{lemma}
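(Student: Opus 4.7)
The plan is a two-point reduction in the style of Le Cam. I would argue by contradiction: suppose that along some subsequence, $\beta_n^p<\frac{1}{2^p}\mathbb{W}_p^p(K_A,K_{\check A})$. The first step is to observe that the events $E_n=\{\mathbb{W}_p^p(\hat K,K_A)\le\beta_n^p\}$ and $\check{E}_n=\{\mathbb{W}_p^p(\hat K,K_{\check A})\le\beta_n^p\}$ must be disjoint. Indeed, if both held simultaneously, the triangle inequality for $\mathbb{W}_p$ combined with the elementary bound $(a+b)^p\le 2^{p-1}(a^p+b^p)$ would give $\mathbb{W}_p^p(K_A,K_{\check A})\le 2^p\beta_n^p$, contradicting the working assumption.

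The second step transfers concentration between the two $n$-sample laws $P^n$ and $\check{P}^n$ via the product likelihood ratio $L_n=\prod_{i=1}^n \check{f}(X^{(i)})/f(X^{(i)})$. Hypothesis~\eqref{eq:lbbetadef} applied under each model gives $P^n(E_n)\to 1$ and $\check{P}^n(\check{E}_n)\to 1$, so disjointness forces $\check{P}^n(E_n)\to 0$. A single Cauchy--Schwarz application then yields
\[
P^n(E_n)-\check{P}^n(E_n)=E_{P^n}\!\left[\mathbf{1}_{E_n}(1-L_n)\right]\le\sqrt{P^n(E_n)}\,\sqrt{E_{P^n}[L_n^2]-1}.
\]
The i.i.d.\ structure together with the likelihood-ratio assumption yield $E_{P^n}[L_n^2]=(1+O(n^{-1}))^n$, which remains bounded uniformly in $n$. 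Since the left-hand side tends to $1$, the inequality is violated for all $n$ large enough, delivering the desired contradiction.

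The main obstacle, and the reason the lemma is phrased conditionally on the likelihood-ratio bound, lies in matching the constants in the resulting chi-squared inequality: the tensorised estimate $(1+C/n)^n\to e^C$ forces the per-sample constant $C$ to be at most $\log 2$ in order for the Le Cam contradiction to close. In the ensuing proof of Theorem~\ref{thm:lowerbound}, I would construct the two hypotheses $A$ and $\check{A}$ as a small perturbation parametrised by a scaling $\delta_n\to 0$: the per-sample chi-squared is then quadratic in $\delta_n$ and can be kept arbitrarily small, while the separation $\mathbb{W}_p(K_A,K_{\check A})$ driving the lower bound scales linearly in $\delta_n$. Tuning $\delta_n$ to match the rate $n^{-s}$ stated in the theorem should then deliver the claimed minimax lower bound.
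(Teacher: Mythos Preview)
Your approach is the same Le Cam two-point reduction as the paper's, and the overall logic (disjointness via the triangle inequality, then a change of measure controlled by the product likelihood ratio) is correct. The difference lies in \emph{which} Cauchy--Schwarz inequality is used. You bound
\[
P^n(E_n)-\check P^n(E_n)=E_{P^n}\!\bigl[\mathbf 1_{E_n}(1-L_n)\bigr]\le \sqrt{P^n(E_n)}\,\sqrt{E_{P^n}[L_n^2]-1},
\]
which forces $E_{P^n}[L_n^2]-1<1$, i.e.\ the implicit constant $C$ in $1+O(n^{-1})$ must satisfy $C<\log 2$. The paper instead applies Cauchy--Schwarz in the form
\[
\check P^n(\check E_n)^2=\Bigl(E_{P^n}[L_n\mathbf 1_{\check E_n}]\Bigr)^2\le E_{P^n}[L_n^2]\cdot P^n(\check E_n),
\]
yielding $P^n(\check E_n)\ge \check P^n(\check E_n)^2/C'$ with $C'=\limsup_n(1+O(n^{-1}))^n<\infty$. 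Since $P^n(E_n)\to 1$ and $P^n(\check E_n)\ge 1/C'>0$ eventually, the two events must intersect under $P^n$, and the triangle inequality finishes the argument. This works for \emph{any} implicit constant in the $O(n^{-1})$ hypothesis, so the lemma holds exactly as stated---the constraint you identify is an artifact of your particular Cauchy--Schwarz step, not of the lemma itself. Your remark that the constant can be tuned in the application to Theorem~\ref{thm:lowerbound} is correct, so nothing is lost downstream; but as a proof of the lemma \emph{as written}, you should switch to the paper's form of Cauchy--Schwarz.
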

The right-hand side in equation~\eqref{eq:hyposep} quantifies the separation between the two hypotheses. In light of Lemma~\ref{lem:changeofmeasure}, the proof of Theorem~\ref{thm:lowerbound} now consists in constructing these hypotheses with $n^{-s}$ separation. 

We conclude from Theorem~\ref{thm:lower}, Proposition~\ref{prop:sharpupperbound} and Theorem~\ref{thm:lowerbound} that
\begin{enumerate}[label=(\roman*)]
    \item If $0< s<\frac{1}{2+\max\{1,\alpha\}}$, then letting $\hat{K}=\mathbb{P}^{\tau_n}_n$, where $\tau_n$ is optimally chosen according to Theorem~\ref{thm:lower}, we obtain an estimator for $K_A$ which is minimax optimal, quotient a small probability of $\eps$, and we find the optimal sample complexity to be $O(n^{-s})$. 
    \item If $s=\frac{1}{2+\max\{1,\alpha\}}$, the convergence rate of the estimator $\hat{K}  =\mathbb{P}^{\tau_n}_n$ is  $\tilde O(n^{-s})$, which differs from the minimax lower bound by at most a poly-log factor.
    \item If $\frac{1}{2+\max\{1,\alpha\}}<s<\frac{1}{2}$, however, we find that this straightforward estimator does not match the minimax lower bound. In particular, the convergence rate for the estimator has an order $\tilde O(n^{-\frac{1}{2+\max\{1,\alpha\}}})$, while the minimax lower bound has an order $O(n^{-s})$. 
\end{enumerate}
Cases (i) and (ii) indicate that the Peak-over-Threshold estimator $\hat{K}=\mathbb{P}^{\tau_n}_n$ is minimax optimal (or nearly optimal) when the value of $s$, a parameter controlling the discrepancy of the models from the family of Pareto distributions, is smaller than (or equal to) the critical value $\frac{1}{2+\max\{1,\alpha\}}$. Intuitively speaking, a small value of $s$ corresponds to models that are \textit{unlike} Paretos, as the ``perturbation error'' $O(k^{-s})$ decreases slowly, as $k\to\infty$. In such scenarios, the relatively crude estimator $\hat{K}=\mathbb{P}^{\tau_n}_n$ turns out to be sufficient for optimality. For Case (iii), we enter into the regime where the models are \textit{like} Paretos, as the ``perturbation error'' $O(k^{-s})$ decreases rapidly as $k\to\infty$. We deal with this scenario in the next section.

\subsection{A Potential Minimax Estimator for $s>\frac{1}{2+\max\{1,\alpha\}}$}\label{sec:potentialestimator}

This section investigates an alternative estimator that achieves minimax convergence rate $O(n^{-s})$ under minor additional assumptions when  $s>\frac{1}{2+\max\{1,\alpha\}}$. Throughout the section, we consider the slightly restrictive case when $A\in\mathbb{R}_+^{d\times m}$ is a square matrix, namely, $d=m$. 
\subsubsection{A Motivating Example For Univariate Case} \label{sec:potential1dim}

It is appealing to consider a maximum likelihood estimator using samples whose values are above a carefully selected threshold $\tau$. Unfortunately, we illustrate that this approach may fail even in a one-dimensional analog of our problem when $d = 1$. Consider the parametric family of distributions for $X_1$ given by $X_1 = \theta_1 Z_1$, where the non-negative random variable $Z_1$ follows a Pareto distribution with index $\alpha$. In other words, $X_1$ admits the density
\[
f(x_1) = \frac{1}{\theta_1}\alpha(1+\frac{x_1}{\theta_1})^{-(\alpha+1)} \quad \forall x_1>0.
\]
We denote by $\mathcal{L}_{\theta_1}(X_1|X_1>\tau)$ the law for the conditional distribution under parameter $\theta_1$. Given a high enough threshold $\tau$, we have the density
\begin{align*}
f_\tau(x_1) &= \frac{f(x_1)}{\mathrm{Prob}(X_1 > \tau)}\\
&=\frac{1}{\theta_1}\alpha(1+\frac{x_1}{\theta_1})^{-(\alpha+1)} / (1+\frac{\tau}{\theta_1})^{-\alpha}\quad \forall x_1>\tau.
\end{align*}
Consequently, the log-likelihood can be computed as
\[
\log f_\tau(x_1) = -\log\theta_1 +\log\alpha -(\alpha+1)\log(1+\frac{x_1}{\theta_1}) +\alpha\log(1+\frac{\tau}{\theta_1})\quad \forall x_1>\tau.
\]
Differentiating the log-likelihood function twice, we obtain the second derivative: 
\[
(\log f_\tau )^{''}(x_1) = \frac{1}{\theta_1^2}\left((\alpha+1)\frac{\theta_1^2}{(\theta_1+x_1)^2} - \alpha\frac{\theta_1^2}{(\theta_1+\tau)^2}\right)\quad \forall x_1>\tau.
\]
Since $\tau\to\infty$, the second derivative function converges to zero uniformly for $\{x_1:x_1\geq\tau\}$. Hence, by the dominated convergence theorem, the Fisher information for the conditional law $\mathcal{L}_{\theta_1}(X_1|X_1>\tau)$ degenerates asymptotically. Hence, the usual theory of parametric likelihood inference, as in~\cite[Chapter~5]{ref:vaart1998asymptotic}, does not apply to establish the square-root consistency for $\theta_1$ using the conditional law $\mathcal{L}_{\theta_1}(X_1|X_1>\tau)$, when $\tau\to\infty$. An alternative view is that the conditional density can be re-written as (after considering the transformation $\tilde x = x/\tau$, hence $\tilde x\geq1$)
\begin{align*}
\tilde f_\tau(\tilde x_1) = \frac{\tau}{\theta_1}\alpha(1+\frac{\tau}{\theta_1}\tilde x_1)^{-(\alpha+1)}(1+\frac{\tau}{\theta_1})^{\alpha} &= \alpha\frac{\frac{\tau}{\theta_1}}{1+\frac{\tau}{\theta_1}} \left(\frac{1+\frac{\tau}{\theta_1}\tilde x_1}{1+\frac{\tau}{\theta_1}}\right)^{-(\alpha+1)} \\
&\to \alpha \tilde x_1^{-(\alpha+1)}\textrm{ as }\tau\to\infty,
\end{align*}
where we highlight that information of $\theta_1$ is lost in the limiting density function.

Besides the conditional information, another important piece of information is the tail probability of threshold exceedance, which we show to lead to an estimation procedure that is `square-root consistent' in the number of samples exceeding $\tau$. We carefully select $\tau$ to show that the procedure attains the minimax convergence rate $O(n^{-s})$. In this section, we first discuss intuitions and state the results in the one-dimensional setup (corresponding to $d=1$). We will generalize the ideas to multivariate setups ($d \geq 2$) in Section~\ref{sec:potentialmultidim}.

First, we consider the one-dimensional analog of the model family~\eqref{eq:minimaxmodel}. Assume a family of models 
\begin{subequations} \label{eq:minimaxmodel1dim}
\begin{equation}\label{eq:Monedim}
M^1 = \cup_{k=1}^\infty M^1_k,
\end{equation}
where
\[
M_k^1 = \left\{ \mathcal{L}(X_1): X_1 = \theta_1 Z_1 ~\textrm{for some } \theta_1\in[l,u]~\textrm{and } \mathcal{L}(Z_1)\in \tilde M^1_k\right\},\]
and
\begin{equation} \label{eq:lawZ1dim}
\tilde M_k^1 = \left\{
\mathcal{L}(Z_1) : \begin{array}{l}
Z_1 \textrm{ admits a (Lebesgue) density }g_1(z_1)\textrm{ in }\mathbb{R}_+ \\
\left|\frac{g_1(z_1) -\alpha(1+z_1)^{-(\alpha+1)}}{\alpha(1+z_1)^{-(\alpha+1)}}\right|\leq \xi k^{-s}~\forall z_1\\
g_1(z_1) \propto (1+z_1)^{-(\alpha+1)} \textrm{ if } z_1> \zeta  k^{\frac{1-2s}{\alpha}}
\end{array}
\right\} . 
\end{equation}
\end{subequations}

We denote by $\{X_1^{(1)},\ldots,X_1^{(n)}\}$ arbitrary~i.i.d.~samples of size $n$. Our procedure to estimate $\theta_1$ requires the existence of another estimator $\hat{r}$ for the normalizing constant appearing in 
\[
g_1(z_1) \propto (1+z_1)^{-(\alpha+1)},
\]
i.e., the last line of equation~\eqref{eq:lawZ1dim}. More precisely, let
\[
r_{n,\mathcal{L}(X_1)} =  \frac{\int_{\zeta  n^{\frac{1-2s}{\alpha}}}^\infty g_1(z_1) \dd z_1}{(1+\zeta n^{\frac{1-2s}{\alpha}})^{-\alpha} }
\]
be the normalizing constant of the tail probability of $Z_1$ relative to a Pareto (cf.~Definition~\ref{def:ourparetodef}). 
We make the following assumption. 
\begin{assumption}[Estimator for the normalizing constant]\label{assump:hatr1dim}
We assume the existence of an estimator $\hat{r} = \hat{r} \left(X_1^{(1)},\ldots,X_1^{(n)}\right)$ such that for any $\eps\in(0,1)$, there exists $C_\eps$ such that
\[
\liminf_{n\to\infty}
\inf_{\mathcal{L}(X_1)\in M^1}P\left(n^{s} \left|\hat{r}-r_{n,\mathcal{L}(X_1)}\right|\leq C_\eps\right) \geq 1-\eps.
\]
\end{assumption}
Assumption~\ref{assump:hatr1dim} postulates that with high probability, the estimator $\hat{r}$ estimates $r_{n,\mathcal{L}(X_1)}$ within $O(n^{-s})$ error uniformly over the model family. While $\hat{r}$ is not trivial to find in general, we show that given the existence of such an estimator, it is possible to estimate $\theta_1$ within $O(n^{-s})$ error as well. 

\begin{theorem}[One dimensional estimation]\label{thm:oracleonedim}
    Suppose $\frac{1}{2+\max\{1,\alpha\}}< s<\frac{1}{2}$. Let $\hat{r} = \hat{r} \left(X_1^{(1)},\ldots,X_1^{(n)}\right)$ be an estimator that satisfies Assumption~\ref{assump:hatr1dim}.
  Choosing $\tau_n= u\zeta n^{\frac{1-2s}{\alpha}}$, and let $\hat\theta_1$ solve the estimating equation
  \[
\frac{\sum_{i=1}^n1_{\{X_1^{(i)}>\tau_n\}}}{n} = \hat{r}(1+\frac{\tau_n}{\hat\theta_1})^{-\alpha}.
\]
Then, for any $\eps\in(0,1)$, there exists $\tilde C_\eps$, such that
    \[
    \liminf_{n\to\infty}
\inf_{\mathcal{L}(X_1)\in M^1}P\left( n^s\left|\hat\theta_1-\theta_1\right|\leq \tilde C_\eps \right) \geq 1-\eps.
    \]
\end{theorem}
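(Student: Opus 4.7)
The plan is to exploit the fact that the threshold $\tau_n$ has been chosen precisely so that above it, the density $g_1$ is \emph{exactly} proportional to a Pareto density. Since $\theta_1 \le u$, we have $\tau_n/\theta_1 \ge \tau_n/u = \zeta n^{(1-2s)/\alpha}$, and the last line of~\eqref{eq:lawZ1dim} then forces $g_1(z_1) = r_n\,\alpha(1+z_1)^{-(\alpha+1)}$ on $[\tau_n/\theta_1,\infty)$, where $r_n := r_{n,\mathcal{L}(X_1)}$ is the normalizing constant of Assumption~\ref{assump:hatr1dim}. Integrating yields the \emph{exact} (non-asymptotic) identity
\[
p_n := P(X_1 > \tau_n) = r_n\,(1 + \tau_n/\theta_1)^{-\alpha},
\]
which inverts to $\theta_1 = \tau_n/((r_n/p_n)^{1/\alpha} - 1)$. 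The estimating equation defining $\hat\theta_1$ has the same algebraic form, with $(r_n,p_n)$ replaced by $(\hat r, \hat p_n)$, where $\hat p_n := n^{-1}\sum_{i=1}^n 1_{\{X_1^{(i)} > \tau_n\}}$. So the problem is reduced to transporting relative errors in $\hat r$ and $\hat p_n$ to the relative error in the inversion.

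Second, I would establish the uniform relative-error bounds $|\hat r/r_n - 1| = O_p(n^{-s})$ and $|\hat p_n/p_n - 1| = O_p(n^{-s})$ over $M^1$. For $\hat r$, the perturbation bound in the first line of~\eqref{eq:lawZ1dim} forces $r_n = 1 + O(n^{-s})$ uniformly in $M^1$, so $r_n$ is uniformly bounded away from $0$ and $\infty$, and Assumption~\ref{assump:hatr1dim} gives the claim. For $\hat p_n$, the exact identity combined with $\tau_n \asymp n^{(1-2s)/\alpha}$ and $\theta_1 \in [l,u]$ yields $p_n \asymp n^{2s-1}$ uniformly in $M^1$, hence $n p_n \asymp n^{2s} \to \infty$. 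Bernstein's inequality applied to the i.i.d.\ Bernoulli$(p_n)$ indicators then gives $|\hat p_n - p_n| = O_p(\sqrt{p_n/n})$, so that the relative error is $O_p(1/\sqrt{np_n}) = O_p(n^{-s})$, uniformly over the class.

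Finally, set $q := (r_n/p_n)^{1/\alpha}$ and $\hat q := (\hat r/\hat p_n)^{1/\alpha}$, so that $\theta_1 = \tau_n/(q-1)$ and $\hat\theta_1 = \tau_n/(\hat q - 1)$. Factoring $\hat q/q = (\hat r/r_n)^{1/\alpha} (p_n/\hat p_n)^{1/\alpha}$ and Taylor expanding with the two bounds above yields $\hat q/q = 1 + O_p(n^{-s})$. Since $q = 1 + \tau_n/\theta_1 \asymp n^{(1-2s)/\alpha}$, both $q-1$ and (with high probability) $\hat q - 1$ are of the same order, so
\[
|\hat\theta_1 - \theta_1| = \tau_n \cdot \frac{|q - \hat q|}{(q-1)(\hat q - 1)} = O_p\!\left(\frac{\tau_n \cdot q \cdot n^{-s}}{q^2}\right) = O_p(n^{-s}),
\]
uniformly over $M^1$, which is the claim. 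The principal obstacle is bookkeeping uniformity in $\mathcal{L}(X_1) \in M^1$; the technical crux is the \emph{uniform} lower bound $p_n \gtrsim n^{2s-1}$, since it is exactly what controls the stochastic fluctuation of $\hat p_n$ in relative terms and ultimately pins the rate at $n^{-s}$. A minor side point is checking that the estimating equation admits a unique positive solution with high probability, which follows because the right-hand side is monotone in $\hat\theta_1$ with range $(0,\hat r)$, and $\hat p_n \in (0,\hat r)$ with probability tending to one (using $np_n \to \infty$ and $\hat r \to 1$).
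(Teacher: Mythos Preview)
Your strategy is sound, and in fact more elementary than the paper's route: where you control the empirical tail fraction $\hat p_n$ by Bernstein's inequality applied to i.i.d.\ Bernoulli$(p_n)$ summands, the paper invokes a tail-empirical-process limit theorem (Resnick, Theorem~9.1) to obtain the same $O_p(n^{-s})$ relative error. Both work; yours avoids the Skorokhod machinery and is cleaner for a pure rate statement, while the paper's argument sits inside the process framework it already uses elsewhere.

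There is one genuine slip you should fix. The ``exact'' identity $p_n = r_n\,(1+\tau_n/\theta_1)^{-\alpha}$ is \emph{not} exact for every law in $M^1$. Recall $M^1=\cup_k M_k^1$, and the proportionality in~\eqref{eq:lawZ1dim} is only guaranteed above $\zeta k^{(1-2s)/\alpha}$, not above $\zeta n^{(1-2s)/\alpha}$. For $k\le n$ your argument is indeed exact, but for $k>n$ the density need not be proportional on $[\zeta n^{(1-2s)/\alpha},\zeta k^{(1-2s)/\alpha}]$, so neither $r_n$ nor $p_n$ is given by the Pareto formula exactly. The fix is immediate: for $k>n$ the first line of~\eqref{eq:lawZ1dim} gives $|g_1/g_{\mathrm{Pareto}}-1|\le \xi k^{-s}\le \xi n^{-s}$ everywhere, so both $r_n$ and $p_n/(1+\tau_n/\theta_1)^{-\alpha}$ lie in $[1-\xi n^{-s},1+\xi n^{-s}]$, and the identity holds up to a $(1+O(n^{-s}))$ factor that is absorbed downstream. (The paper's proof splits into exactly these two cases.) A related cosmetic point: your claim ``$r_n = 1+O(n^{-s})$ uniformly in $M^1$'' is false as written, since for small fixed $k$ one only has $r_n\in[1-\xi,1+\xi]$; but you only use it to conclude $r_n$ is bounded away from $0$ and $\infty$, which is what $[1-\xi,1+\xi]$ already gives.
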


\subsubsection{Generalizations for $d\geq2$}\label{sec:potentialmultidim}

Recall that we consider the case where $d = m$, and the models can be written as $X=AZ$ with $A$ being a square $m$-by-$m$ matrix. Let  $\theta_i= \|A_{\cdot i}\|_1$ be the length of the $i$-column of $A$ measured using a $\ell_1$-norm. It is useful to consider a normalized version of $A$, namely, the matrix $\bar{A}$ whose $i$-th column can be derived as
\[\bar{A}_{\cdot i} \Let \frac{A_{\cdot i} }{ \theta_i } \qquad \forall i.\]
In this way, the columns of $\bar{A}$ represent the direction of the columns of $A$. Then, the model can be reparametrized by the magnitude-direction information $(\theta, \bar A)$ as 
\[ X = \bar{A} \diag(\theta_1,\ldots, \theta_m) Z,\]
and $\diag(\theta_1,\ldots, \theta_m)$ denotes the diagonal matrix with diagonal elements $\theta_i$. The decomposition of $A$ into $\bar{A}$ and $\theta$ is useful because, as it will turn out, estimating $\bar{A}$ can be easier than estimating $\theta$. In fact, our proposed procedure first estimates $\bar{A}$ with an error of order $O(n^{-s})$, then uses a plug-in estimate of $\bar{A}$ to estimate $\theta$ also with an error of order $O(n^{-s})$. For technical reasons, we further restrict to the case $\frac{1}{2+\max\{1,\alpha\}}<s<\frac{1}{\alpha}$ in order to derive an estimate of $\bar{A}$ of the required error rate.

Similar to Section~\ref{sec:potential1dim}, we denote by
\[
r_{n,\mathcal{L}(X)}=\frac{\int_{z\in\mathbb{R}^m_+:\|z\|_1>\zeta n^{\frac{1-2s}{\alpha}}} g(z)\mathrm{d}z}{ \int_{z\in\mathbb{R}^m_+:\|z\|_1>\zeta n^{\frac{1-2s}{\alpha}}}\alpha^m\prod_{i=1}^m (1+z_i)^{-(\alpha+1)}\mathrm{d}z}
\]
the normalizing constant of the tail probability of $Z$ (that is associated with the law of $X$) relative to the product of Paretos (cf.~Definition~\ref{def:ourparetodef}). We have the following theorem.

\begin{theorem}[Multivariate estimation]\label{thm:bigthmpotentialestimator}
     Assume $\frac{1}{2+\max\{1,\alpha\}}< s< \min\{\frac{1}{2},\frac{1}{\alpha}\}$. Let $\tilde \tau_n = \Theta\left(\left(\frac{\log n }{n}\right)^{-\frac{1}{\alpha}}\right)$, and let $\tilde A$ be the matrix whose columns are the locations of the $m$ cluster centers identified with k-means clustering~\cite[Algorithm~14.1]{hastie2009elements} using $\mathbb{P}_n^{\tilde\tau_n}$.  Let $\hat{r} = \hat{r} \left(X^{(1)},\ldots,X^{(n)}\right)$ be an estimator such that for any $\eps\in(0,1)$, there exists $C_\eps$ such that
   
\begin{equation}\label{eq:oraclehatrwhole}
    \liminf_{n\to\infty}
\inf_{\mathcal{L}(X)\in M}P\left(n^{s} \left|\hat{r}-r_{n,\mathcal{L}(X)}\right|\leq C_\eps\right) \geq 1-\eps.
\end{equation}
Let $\tau_n= u\zeta n^{\frac{1-2s}{\alpha}}$, and for any $i=1,\ldots,m$, let $\hat\theta_i$ solve the estimating equation
  \[
\frac{\sum_{j=1}^n1_{\{\tilde X_i^{(j)}>\tau_n\}}}{n} = \hat{r}(1+\frac{\tau_n}{\hat\theta_i})^{-\alpha},
\]
where $\tilde X^{(j)} = \tilde {A}^{-1}X^{(j)}$. Denoting $\hat{A} = \tilde A\diag(\hat{\theta}_1,\ldots,\hat{\theta}_m)$, then for any $\eps\in(0,1)$, there exists $\tilde C_\eps$, such that
    \[
    \liminf_{n\to\infty}
\inf_{\mathcal{L}(X)\in M}P\left(\mathbb{W}_p^p(K_{\hat{A}},K_A)\leq \tilde C_\eps n^{-s}\right) \geq 1-\eps.
    \] 
\end{theorem}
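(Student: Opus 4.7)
The plan is to prove the theorem in two stages, mirroring the two-step construction of $\hat{A}$: first establish that the $k$-means centroids $\tilde{A}_{\cdot i}$ recover the normalized directions $\bar{A}_{\cdot i} = A_{\cdot i}/\theta_i$ within $\ell_1$-error $O(n^{-s})$ (up to a permutation of columns), and then use this as a plug-in to reduce the estimation of the magnitudes $\theta_i$ to $m$ univariate problems already treated by Theorem~\ref{thm:oracleonedim}. The final bound on $\mathbb{W}_p^p(K_{\hat{A}},K_A)$ will follow by coupling the atoms of the two Dirac mixtures one by one: both the locations $A_{\cdot i}/\|A_{\cdot i}\|_1$ and the weights $\theta_i^\alpha/\sum_j \theta_j^\alpha$ will be controlled at rate $n^{-s}$, and a direct transportation plan that matches atom $i$ to atom $i$ will yield cost $O(n^{-s})$ for any $p \ge 1$.

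For the direction step, the crucial observation is that when $\|X\|_1 > \tilde{\tau}_n$, one latent coordinate $Z_{i^\star}$ must be of order $\tilde{\tau}_n$, so $X/\|X\|_1$ lies within $O(\|Z_{-i^\star}\|/\tilde{\tau}_n) = O_p(1/\tilde{\tau}_n)$ of $\bar{A}_{\cdot i^\star}$. With $\tilde{\tau}_n = \Theta((\log n/n)^{-1/\alpha})$ we get $1/\tilde{\tau}_n = O((\log n/n)^{1/\alpha}) = O(n^{-s})$ under the hypothesis $s<1/\alpha$, so the roughly $\log n$ angular samples above $\tilde{\tau}_n$ split into $m$ clusters of diameter $O(n^{-s})$, whose centers are separated by a constant depending on the lower bound $JA \ge \sigma$ together with $l, u$. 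Standard stability of $k$-means on inputs that are small perturbations of $m$ well-separated point masses then yields $\|\tilde{A}_{\cdot i} - \bar{A}_{\cdot \pi(i)}\|_1 = O(n^{-s})$ for some permutation $\pi$, which I relabel away henceforth.

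For the magnitude step, since $\tilde{A}$ is close to $\bar{A}$ and $\bar{A}$ is well-conditioned (again by $JA \ge \sigma$ and the column-norm bounds), the inverse $\tilde{A}^{-1}$ exists with high probability and satisfies $\tilde{A}^{-1}\bar{A} = I + E$ with $\|E\|_{\mathrm{op}} = O(n^{-s})$. Hence $\tilde{X}_i^{(j)} = \theta_i Z_i^{(j)} + \sum_{k\neq i} E_{ik}\theta_k Z_k^{(j)}$, i.e., each transformed coordinate agrees with $\theta_i Z_i^{(j)}$ up to a small linear contamination. Now apply Theorem~\ref{thm:oracleonedim} coordinate by coordinate, feeding the oracle $\hat{r}$ from~\eqref{eq:oraclehatrwhole} into the univariate estimating equation. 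The probabilistic input is that $\frac{1}{n}\sum_j \mathbf{1}\{\tilde{X}_i^{(j)}>\tau_n\}$ concentrates, at rate $n^{-s}$, around $r_{n,\mathcal{L}(X)}(1+\tau_n/\theta_i)^{-\alpha}$; since the latter is a smooth and monotone function of $\theta_i$, inverting the equation gives $|\hat{\theta}_i - \theta_i| = O(n^{-s})$.

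The principal obstacle is controlling how the perturbation $E$ propagates into the tail-exceedance indicator $\mathbf{1}\{\tilde{X}_i^{(j)} > \tau_n\}$: a priori, an $O(n^{-s})$-sized contamination from $\{Z_k^{(j)}\}_{k\neq i}$ could shift the exceedance boundary enough to distort the probability at a leading-order level. The resolution should exploit a Breiman-type argument from the theory of regular variation: in the heavy-tailed regime, tail exceedances of $\tilde{X}_i^{(j)}$ are asymptotically driven by $\{\theta_i Z_i^{(j)} > \tau_n\}$, so the probability discrepancy between $\{\tilde{X}_i^{(j)} > \tau_n\}$ and $\{\theta_i Z_i^{(j)} > \tau_n\}$ is of strictly smaller order than $\tau_n^{-\alpha} = \Theta(n^{-(1-2s)})$; combined with a Bernstein concentration on the i.i.d.~indicators, the total sampling error stays at $O(n^{-s})$. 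With both the atomic locations and the mixing weights of $K_{\hat{A}}$ within $O(n^{-s})$ of those of $K_A$, the Wasserstein bound follows immediately from the atom-to-atom coupling sketched above.
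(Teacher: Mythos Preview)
Your proposal is correct and follows essentially the same two-stage architecture as the paper: the paper packages your ``direction step'' as Lemma~\ref{lem:estbarA} (using the Wasserstein bias bound of Proposition~\ref{thm:bias} plus a union bound over the $O(\log n)$ thresholded samples to show each $\bar A_{\cdot i}^e$ is hit and no sample falls outside $\bar A^e$), and your ``magnitude step'' as Lemmas~\ref{lem:oraclebarA}--\ref{lem:pluginbarA}, after which the atom-to-atom coupling gives the Wasserstein conclusion. The one place where your sketch differs in detail is the contamination control: the paper does not invoke a Breiman-type result (which concerns products, not additive perturbations), but instead sandwiches $\{\tilde X_i^{(j)}>\tau_n\}$ between the events $\{(\bar A^{-1}X)_i^{(j)}>\tau_n(1\pm c\,n^{-s})\}$ and shows the resulting probability gap is $O(n^{s-1})$; you may want to replace your Breiman heuristic with this sandwich to make the step rigorous.
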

Note that $K_{\hat{A}}$ is defined by equation~\eqref{eq:limitcondmeasure}, and the inverse $\tilde A^{-1}$ exists with high probability. 
We summarize the proposed procedure of Theorem~\ref{thm:bigthmpotentialestimator} in Algorithm~\ref{alg:pseudocode}.
Our proposed procedure first estimates the ``directions'' of the spectral measure $K_A$, by selecting a high threshold $\tilde\tau = \Theta\left(\left(\frac{\log n}{n}\right)^{-\frac{1}{\alpha}}\right)$ and employing the k-means clustering algorithm on the thresholded samples $\mathbb{P}_n^{\tilde\tau}$ to identify the locations of the cluster centers. With this threshold $\tilde\tau$, the number of thresholded samples would be $O_p(\log n)$. Nevertheless, this logarithmic number of samples in the tail suffices for the required degree of accuracy. The algorithm collects the cluster centers (as column-vectors) into a matrix $\tilde A$, where the ordering of the columns can be arbitrary.  The algorithm then uses $\tilde A^{-1}$ to transform each observation $X^{(j)},j=1,\ldots,n$, to make each dimension of the problem independent. Finally, the algorithm solves an estimating equation for each dimension of the problem based on the information of the tail probability of threshold exceedance for each dimension. 
\begin{algorithm}
\caption{Pseudocode for the potential minimax estimator}\label{alg:pseudocode}
\KwData{$n$~i.i.d.~samples $X^{(1)},\ldots,X^{(n)}$ from model $X=AZ$}
\KwResult{an estimator $\hat{K}$ for $K_A$}
$\tilde\tau \gets \Theta\left(\left(\frac{\log n}{n}\right)^{-\frac{1}{\alpha}}\right)$\Comment*[r]{Assume $\alpha$ is given}
$\mathbb{P}_n^{\tilde\tau} \gets \left\{\frac{X^{(i)}}{\|X^{(i)}\|_1}\in\aleph^1_+:\|X^{(i)}\|_1>\tilde\tau, 1\leq i\leq n\right\}$\;
$\tilde A \gets \text{cluster centers of }\textbf{k-means} (\mathbb{P}_n^{\tilde\tau})$\Comment*[r]{Assume $m=d$ is given}
\For{$j=1,\ldots,n$} {
    $\tilde X^{(j)} \gets \tilde A^{-1} X^{(j)}$\;
}
$\tau \gets \Theta\left(n^{\frac{1-2s}{\alpha}}\right)$\Comment*[r]{Assume $s$ is given}
\For{$i=1,\ldots,m$} {
   $\hat{\theta}_i\gets \textrm{ solution of }\frac{\sum_{j=1}^n1_{\{\tilde X_i^{(j)}>\tau\}}}{n} = \hat{r}(1+\frac{\tau}{\hat\theta_i})^{-\alpha}$\Comment*[r]{Assume $\hat r$ is given}
}
$\hat{A} \gets \tilde A\textrm{diag}(\hat{\theta}_1,\ldots,\hat{\theta}_m)$\;
$\hat{K} \gets K_{\hat{A}}$
\end{algorithm}

\section{Numerical Experiments}\label{sec:numerics}
We formulate a novel class of Pareto-like models under which we derive the statistical minimax estimation rate, elucidate scenarios where the conventional estimator using the empirical measures of thresholded samples is minimax optimal, and propose an alternative estimator that attains the minimax rate under minor additional assumptions when the conventional estimator is not minimax optimal. We now present some numerical experiments that empirically validate the rate we derived and offer further insights
into the choice of tuning parameters. 

In particular, we consider the scenario $d=m=2$, namely, the dimension of observations $X$ equals the dimension of latent factors $Z$. We consider a range of choices of $\alpha\in\{0.5,1,2\}$ and $s\in\{0.2,0.4\}$. The ground truth $A$ is chosen as the matrix
\[
A = \begin{pmatrix}
    1+n^{-s} & 0 \\
    0 & 1-n^{-s} 
\end{pmatrix},
\]
which appears as a worst-case example in the analysis of the minimax lower bound in the proof of Theorem~\ref{thm:lowerbound}. 

For any chosen sample size $n$, we generate $n$ i.i.d.~samples $X^{(i)} = A Z^{(i)},i=1,\ldots,n$. We generate $Z^{(i)}=(Z^{(i)}_1,Z^{(i)}_{2})$ according to the worst-case distribution that appears in the analysis of the minimax lower bound  in the proof of Theorem~\ref{thm:lowerbound} as follows:
\begin{enumerate}[label=(\roman*)]
        \item Generate $Z^{(i)}_1$ from the univariate distribution with density function 
        \[
        \alpha(1+n^{-s})\left(1+(1+n^{-s}) z_1\right)^{-(\alpha+1)} \quad \forall z_1 \geq 0.
        \]
        Independently, generate $Z^{(i)}_2$ from the univariate distribution with density function 
        \[
        \alpha(1-n^{-s})\left(1+(1-n^{-s}) z_2\right)^{-(\alpha+1)} \quad \forall z_2 \geq 0.
        \]
        \item If $\|Z^{(i)}\|_1< n^{\frac{1-2s}{\alpha}}$, terminate and exit. On the other hand, if $\|Z^{(i)}\|_1\geq n^{\frac{1-2s}{\alpha}}$, replace this sample by the following subroutine
        \begin{itemize}
            \item Generate a vector $\tilde Z^{(i)} = (\tilde Z^{(i)}_1,\tilde Z^{(i)}_2)$, where for $j\in\{1,2\}$, $\tilde Z^{(i)}_j$ is an independent sample from the univariate Pareto distribution with density function \[\alpha(1+z_j)^{-(\alpha+1)}, z_j\geq 0.\]
            \item While $\|\tilde Z^{(i)}\|_1< n^{\frac{1-2s}{\alpha}}$, discard this sample and regenerate according to the previous step, ..., until we obtain one sample  $\tilde Z^{(i)}$ such that $\|\tilde Z^{(i)}\|_1\geq n^{\frac{1-2s}{\alpha}}$. Use this sample to replace the original $Z^{(i)}$.
        \end{itemize}
\end{enumerate}
After this procedure to generate the $i$-th sample $Z^{(i)}$, we compute $X^{(i)} = AZ^{(i)}$. Given i.i.d.~observations $X^{(1)},\ldots,X^{(n)}$, the goal is to construct estimators for $K_A$, where $K_A$ is the mixture of Diracs given by equation~\eqref{eq:limitcondmeasure}.

In the experiments, we consider the following two estimators:
\begin{enumerate}[label=(\roman*)]
    \item The conventional estimator whose rate is given by Theorem~\ref{thm:lower}. Depending on the value of $s$, we choose the threshold $\tau$ as
    \[
    \tau = \bar\kappa n^{\frac{1}{\min\{2+\alpha,3\alpha\}}}\quad\textrm{or}\quad \tau = \bar\kappa n^{\frac{1-2s}{\alpha}},
    \]
    where $\bar\kappa$ is a tuning hyperparameter. In this design, we have ignored the poly-log factor in Theorem~\ref{thm:lower}~$(i)$. We run k-means clustering on $\mathbb{P}_n^\tau$ to identify the locations and the weights of $d$ cluster centers and denote the empirical measure of the weighted mixture of the cluster centers by $\hat{K}_{conv}$. We measure the estimation error by the 1-Wasserstein distance
    \begin{equation}\label{eq:wasserrconv}
    \mathbb{W}_1(\hat{K}_{conv},K_A).
    \end{equation}
    We employ the Python package \textit{POT}~\cite{flamary2021pot} to compute this Wasserstein distance numerically.
    \item The proposed new estimator depicted in Algorithm~\ref{alg:pseudocode}, whose rate is given by Theorem~\ref{thm:bigthmpotentialestimator}. It can be verified that the trivial estimator $\hat{r}=1$ satisfies the desired accuracy in equation~\eqref{eq:oraclehatrwhole} for the distribution at hand. Note that there are two hyperparameters for tuning
    \[
    \tilde\tau  = \tilde\kappa  \left(\frac{\log n}{n}\right)^{-\frac{1}{\alpha}} \quad\text{and}\quad  \tau = \kappa n^{\frac{1-2s}{\alpha}}.
    \]
    Let $\hat{K}_{new} = \hat{K}$, the output of Algorithm~\ref{alg:pseudocode}, we measure the estimation error by the 1-Wasserstein distance 
    \begin{equation}\label{eq:wasserrnew}
        \mathbb{W}_1(\hat{K}_{new},K_A).
    \end{equation}
    The package $POT$~\cite{flamary2021pot} is used to compute this Wasserstein distance.
\end{enumerate}

\subsection{Empirical Validation of the Conventional Estimator}\label{sec:empiconv}
We first validate the convergence rate of the conventional estimator given by Theorem~\ref{thm:lower}. We consider a range of choices of $\alpha\in\{0.5,1,2\}$ and $s\in\{0.2,0.4\}$. The choice $s=0.2$ would satisfy $s<\frac{1}{2+\max\{1,\alpha\}}$ for all the $\alpha$ considered and the choice $s=0.4$ would satisfy $\frac{1}{2+\max\{1,\alpha\}}< s < \frac{1}{2}$ for all the $\alpha$ considered.

First, we consider a representative choice $\alpha=2$ and $s=0.2$. The tuning of hyperparameter $\bar\kappa$ is crucial, as it controls the number of samples exceeding the threshold $\tau$, which then controls when and how the asymptotics dictated by Theorem~\ref{thm:lower} kicks in. Hence, we plot the relation between the number of thresholded samples and the total sample size $n$, as we increase $n$, and use the plots as diagnostic plots to select $\bar\kappa$. We trial with four representative choices of $\bar\kappa\in\{0.1,0.5,1,1.3\}$, and illustrate the relation in Figure~\ref{fig:barkappatune}. Theoretically, the logarithm of the number of thresholded samples has a linear relation with the logarithm of the total sample size. Empirically, we find that different choice of hyperparameter $\bar\kappa$ induces different bias and variance tradeoffs. In Figure~\ref{fig:barkappatune}(a), for $\bar\kappa=0.1$, we find a slight variance in the curve. However, the relation is not linear and hence exhibits a bias. In Figure~\ref{fig:barkappatune}~(b), for $\bar\kappa=0.5$, we find that variance in the curve increases. Meanwhile, the curve still departs from a linear relation. In Figure~\ref{fig:barkappatune}~(c), for $\bar\kappa=1$, we find that the variance further increases, and the bias is reduced. In Figure~\ref{fig:barkappatune}~(d), for $\bar\kappa=1.3$, we find that the variance is significant. Any choice of $\bar\kappa$ larger than $1.3$ causes the program to throw an error, as there are too few thresholded samples available when $n$ is small. 

For each choice of $\bar\kappa$, we further plot the estimation error~\eqref{eq:wasserrconv} versus the total sample size $n$, as we increase $n$. We illustrate the relations in Figure~\ref{fig:barkappatunevis}. Theoretically, the logarithm of the estimation error has a linear relation with the logarithm of the total sample size. Empirically, we find that different choice of hyperparameter $\bar\kappa$ again induces different bias and variance tradeoffs. We summarize the convergence rates under different $\bar\kappa$ in Table~\ref{tab:ratebarkappa}. We observe that for the choice $\bar\kappa=1$, which appears optimal from the diagnostic plots (namely, from Figure~\ref{fig:barkappatune}), the empirical convergence rate $n^{-0.207}$ closely follows the theoretical rate $n^{-s}$ (where $s=0.2$).

\begin{figure}[h]
    \centering
    \subfigure[$\bar\kappa=0.1$]{
	 \includegraphics[width=0.45\columnwidth]{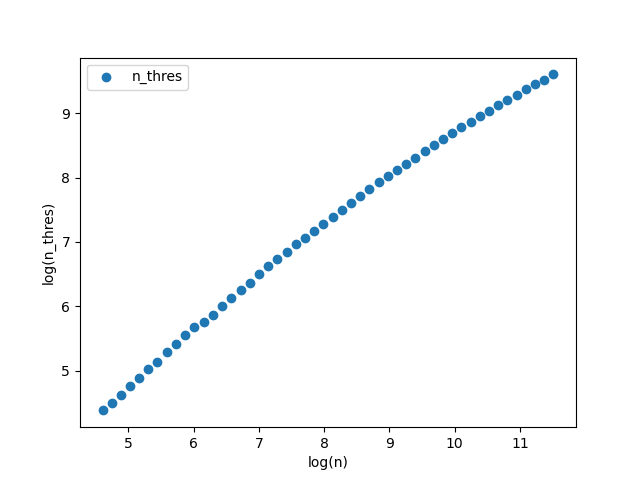}} \hspace{1mm}
	\subfigure[$\bar\kappa=0.5$]{
	\includegraphics[width=0.45\columnwidth]{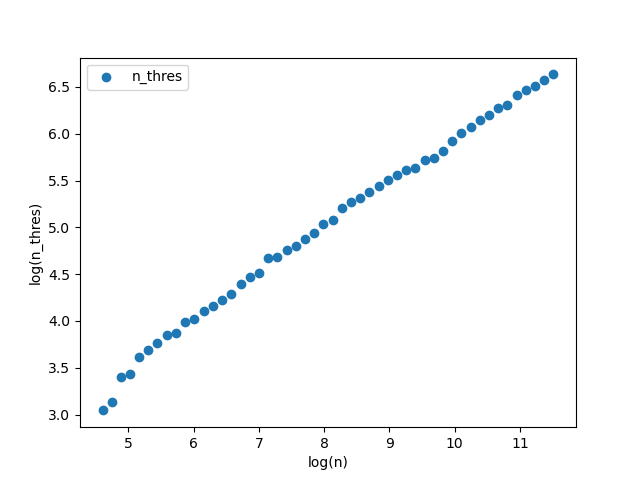}}
	    \hspace{1mm}
	\subfigure[$\bar\kappa=1$]{
		\includegraphics[width=0.45\columnwidth]{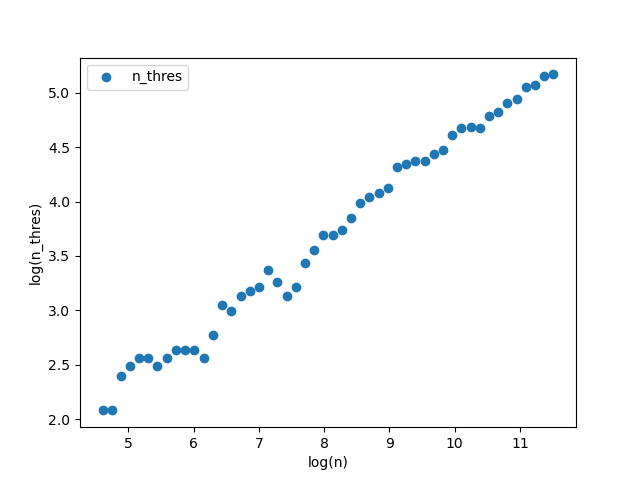}}
	\subfigure[$\bar\kappa=1.3$]{
		\includegraphics[width=0.45\columnwidth]{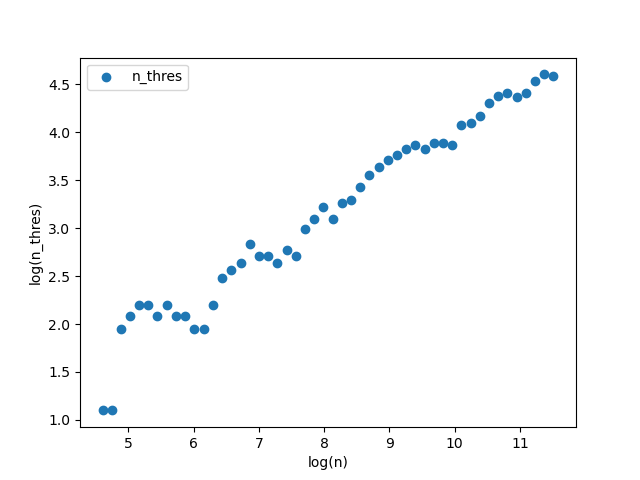}} 
	\caption{Logarithm of the number of samples exceeding the threshold $\tau$ versus $\log(n)$, where $n$ is the total sample size, following the conventional estimator, for each choice of $\bar\kappa$.}
    \label{fig:barkappatune}
\end{figure}

\begin{figure}[h]
    \centering
    \subfigure[$\bar\kappa=0.1$]{
	 \includegraphics[width=0.45\columnwidth]{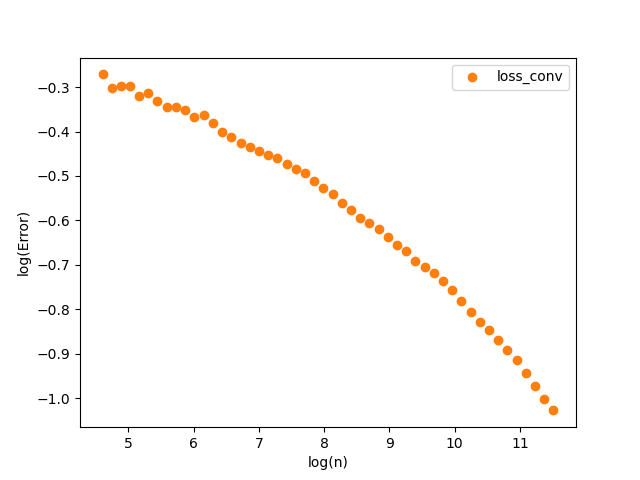}} \hspace{1mm}
	\subfigure[$\bar\kappa=0.5$]{
	\includegraphics[width=0.45\columnwidth]{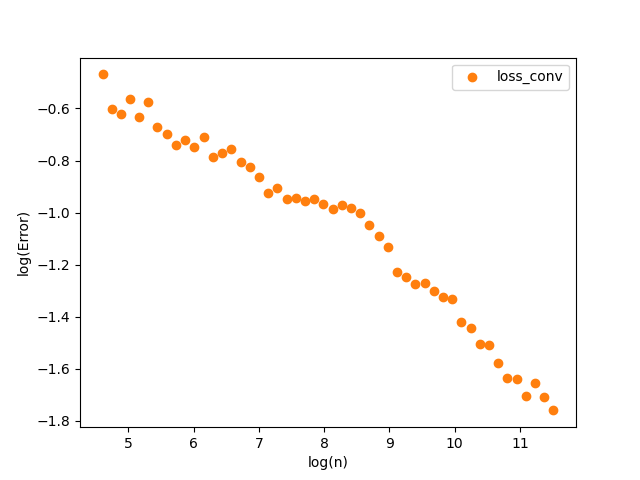}}
	    \hspace{1mm}
	\subfigure[$\bar\kappa=1$]{
		\includegraphics[width=0.45\columnwidth]{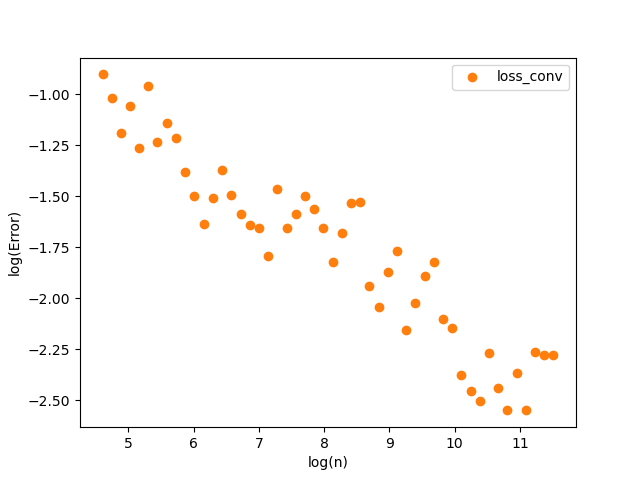}}
	\subfigure[$\bar\kappa=1.3$]{
		\includegraphics[width=0.45\columnwidth]{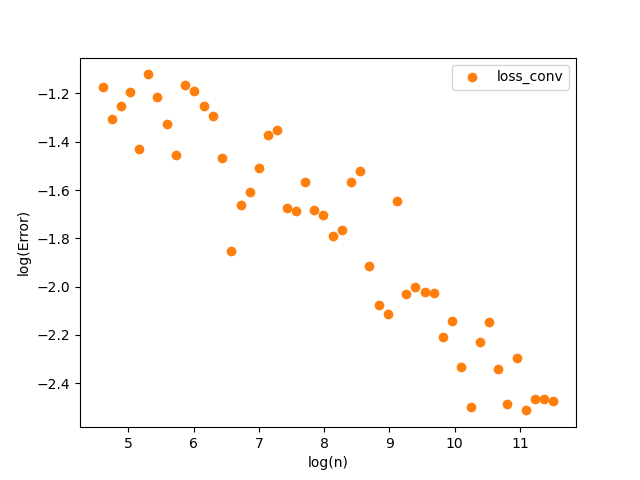}} 
	\caption{Empirical convergence of the conventional method versus $\log(n)$, where $n$ is the total sample size, for each choice of $\bar\kappa$.}
    \label{fig:barkappatunevis}
\end{figure}

\begin{table}[h!]
\centering
\begin{tabular}{||c|c|c|c|c||} 
\hline
$\bar\kappa$ & 0.1 & 0.5 & \textbf{1} & 1.3\\
\hline
rate & -0.104 & -0.171 & \textbf{-0.207} & -0.199\\
\hline
\end{tabular}
\caption{Empirical convergence rate for the conventional estimator for difference choices of hyperparameter $\bar\kappa$, when $\alpha=2$ and $s=0.2$.}
\label{tab:ratebarkappa}
\end{table}

Next, we extend this approach of tuning $\bar\kappa$ to a range of choices of $\alpha\in\{0.5,1,2\}$ and $s\in\{0.2,0.4\}$, and summarize the empirical convergence rate (under reasonably tuned $\bar\kappa$) in Table~\ref{tab:convsummary}. We observe that the empirical convergence rate closely follows the theoretical rate in Theorem~\ref{thm:lower}.

\begin{table}[h!]
\centering
\begin{tabular}{||c|c|c|c||} 
\hline
 & $\alpha=0.5$ & $\alpha=1$ & $\alpha=2$\\
\hline
$s=0.2$ & -0.233 (-0.200) & -0.215 (-0.200) & -0.207 (-0.200) \\
\hline
$s=0.4$ & -0.375 (-0.333) & -0.329 (-0.333)& -0.271 (-0.250)\\
\hline
\end{tabular}
\caption{Empirical convergence rate for the conventional estimator for different choices of $\alpha$ and $s$. Numbers in parentheses are the corresponding theoretical convergence rates.}
\label{tab:convsummary}
\end{table}

\subsection{Empirical Validation of the Proposed New Estimator}\label{sec:empinew}
We now discuss the selection of tuning parameters $\kappa$ and $\tilde\kappa$ via diagnostic plots and empirically compare the convergence rate of the new estimator against the conventional estimator.

First, we consider a representative choice $\alpha=2$ and $s=0.4$. We fix $\kappa=1$, and consider the tuning of $\tilde\kappa$. In Appendix~\ref{app:additionalsim}, we demonstrate that the tuning of $\kappa$ is more robust, and the choice of $\kappa=1$ is also reasonably optimal. Here, the number of thresholded samples would be logarithmic in $n$, which is used to identify the ``directions'' of the spectral measure $K_A$. Hence, we plot the relation between the number of thresholded samples and the total sample size $n$, as we increase $n$, and use the plots as diagnostic plots to select $\tilde\kappa$. We trial with four representative choices of $\tilde\kappa\in\{0.1,0.3,0.5,1\}$, and illustrate the relation in Figure~\ref{fig:tildekappatune}. Theoretically, the number of thresholded samples has a linear relation with the logarithm of the total sample size. Empirically, as in Section~\ref{sec:empiconv}, we find that different choice of hyperparameter $\tilde\kappa$ induces different bias and variance tradeoffs. In particular, from Figure~\ref{fig:tildekappatune}~(a) to Figure~\ref{fig:tildekappatune}~(d), we find that the variance of the curve increases while the bias of the curve decreases. Moreover, the choice $\tilde\kappa=0.3$ appears to strike an optimal bias-variance tradeoff. 

For each choice of $\tilde\kappa$, we further plot the estimation error~\eqref{eq:wasserrnew} versus the total sample size $n$, as we increase $n$. We illustrate the relations in Figure~\ref{fig:tildekappatunevis}, where we also add the error of the conventional estimator for comparison. Theoretically, the logarithm of the estimation error has a linear relation with the logarithm of the total sample size. Empirically, we find that different choice of hyperparameter $\tilde\kappa$ again induces different bias and variance tradeoffs. We summarize the convergence rates under different $\tilde\kappa$ in Table~\ref{tab:ratetildekappa}. We observe that for the choice $\tilde\kappa=0.3$, which appears optimal from the diagnostic plots (namely, from Figure~\ref{fig:tildekappatune}), the empirical convergence rate $n^{-0.408}$ closely follows the theoretical rate $n^{-s}$ (where $s=0.4$).

Finally, we extend this approach of tuning $\kappa$ and $\tilde\kappa$ to a range of choices of $\alpha\in\{0.5,1,2\}$ and $s=0.4$. Note that these combinations of $\alpha$ and $s$ satisfy the condition in Theorem~\ref{thm:bigthmpotentialestimator}. We summarize the empirical convergence rate and the comparison to the conventional estimator in Table~\ref{tab:newsummary}. We observe that the empirical convergence rates closely follow the corresponding theoretical rates, and the proposed new estimator consistently outperforms the conventional estimator.

\begin{figure}[h]
    \centering
    \subfigure[$\tilde\kappa=0.1$]{
	 \includegraphics[width=0.45\columnwidth]{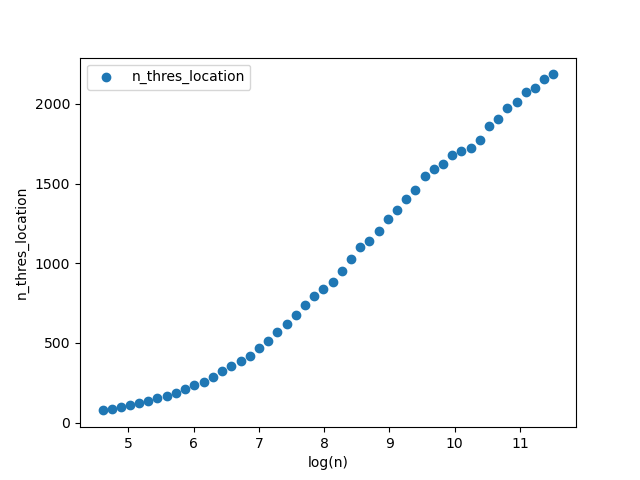}} \hspace{1mm}
	\subfigure[$\tilde\kappa=0.3$]{
	\includegraphics[width=0.45\columnwidth]{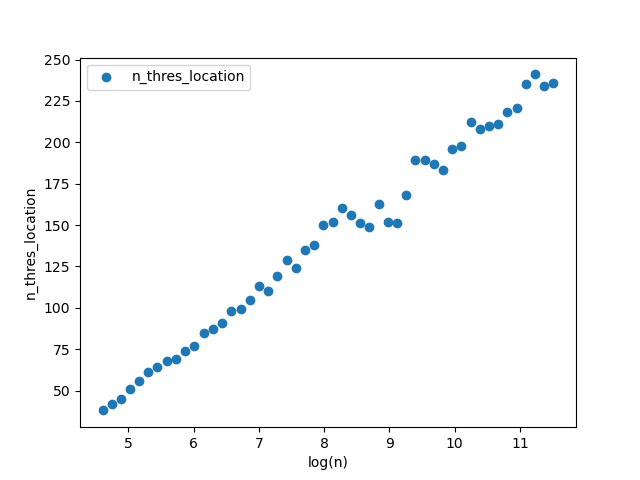}}
	    \hspace{1mm}
	\subfigure[$\tilde\kappa=0.5$]{
		\includegraphics[width=0.45\columnwidth]{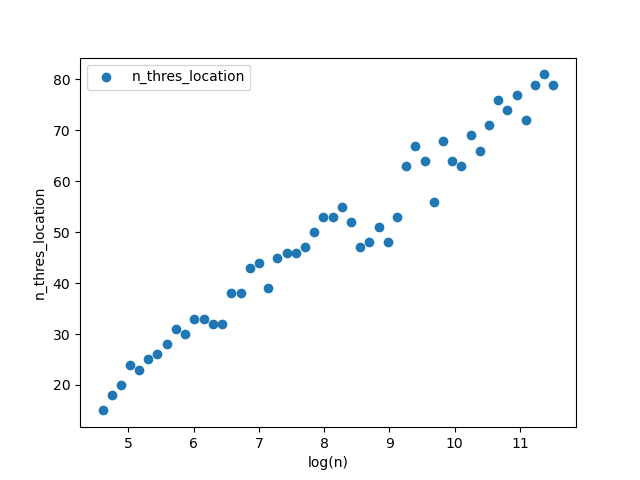}}
	\subfigure[$\tilde\kappa=1$]{
		\includegraphics[width=0.45\columnwidth]{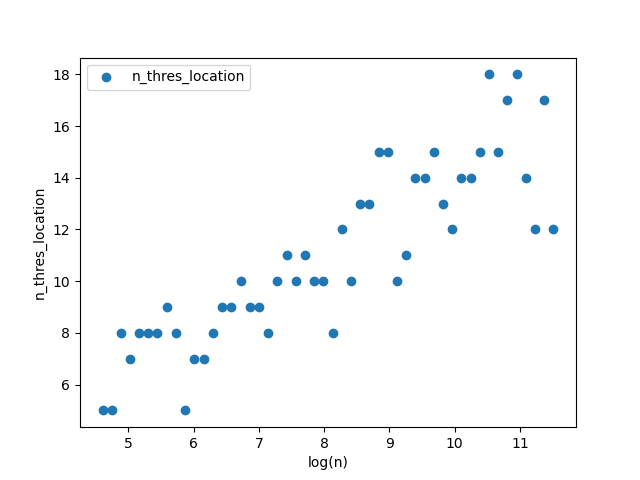}} 
	\caption{Number of samples exceeding the threshold $\tau$ versus $\log(n)$, where $n$ is the total sample size, for each choice of $\tilde\kappa$.}
    \label{fig:tildekappatune}
\end{figure}


\begin{figure}[h]
    \centering
    \subfigure[$\tilde\kappa=0.1$]{
	 \includegraphics[width=0.45\columnwidth]{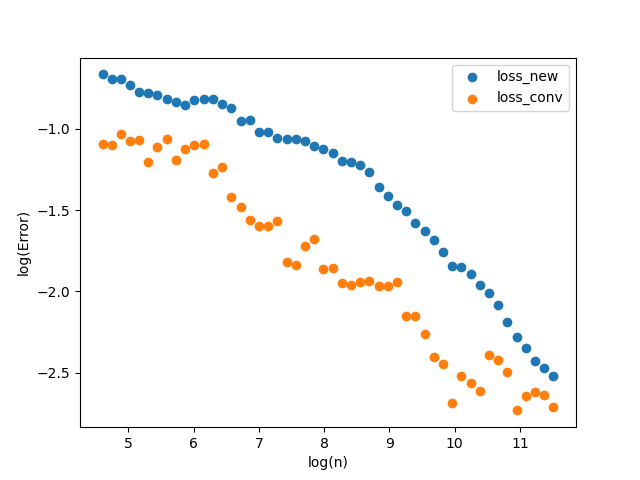}} \hspace{1mm}
	\subfigure[$\tilde\kappa=0.3$]{
	\includegraphics[width=0.45\columnwidth]{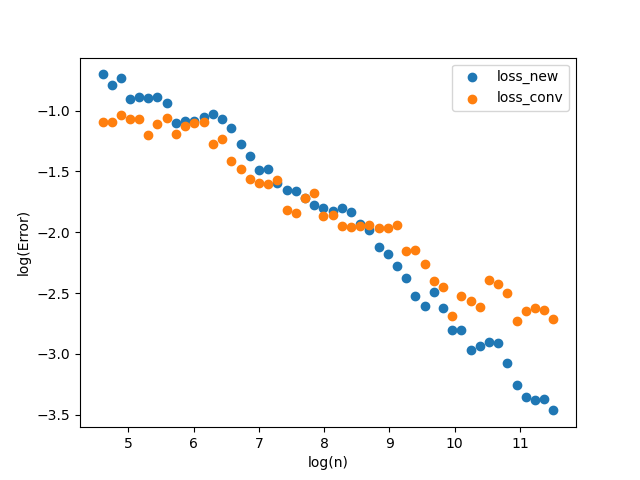}}
	    \hspace{1mm}
	\subfigure[$\tilde\kappa=0.5$]{
		\includegraphics[width=0.45\columnwidth]{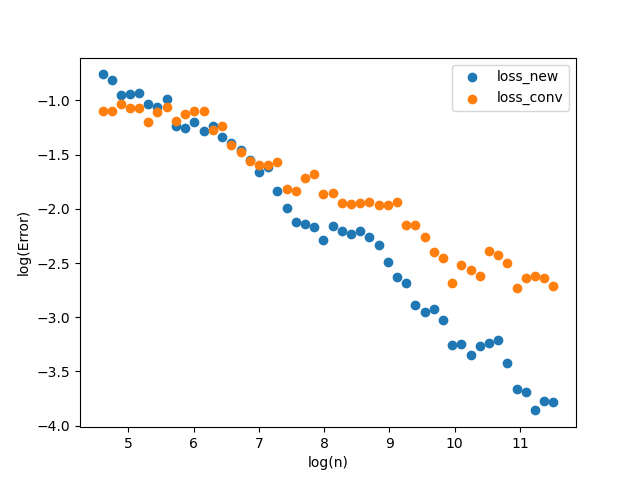}}
	\subfigure[$\tilde\kappa=1$]{
		\includegraphics[width=0.45\columnwidth]{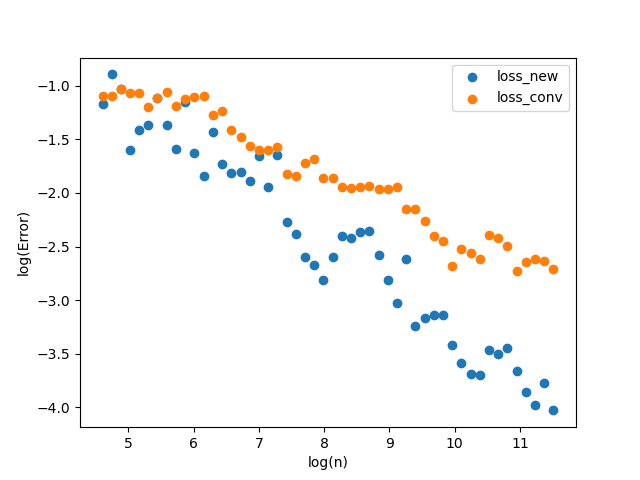}} 
	\caption{Empirical convergence of methods versus $\log(n)$, where $n$ is the total sample size, for each choice of $\tilde\kappa$.}
 \label{fig:tildekappatunevis}
\end{figure}

\begin{table}[h!]
\centering
\begin{tabular}{||c|c|c|c|c||} 
\hline
$\tilde\kappa$ & 0.1 & \textbf{0.3} & 0.5 & 1\\
\hline
rate & -0.257 & \textbf{-0.408} & -0.456 & -0.433\\
\hline
\end{tabular}
\caption{Convergence rate for the proposed estimator for difference choice of hyperparameter $\tilde\kappa$.}
\label{tab:ratetildekappa}
\end{table}

\begin{table}[h!]
\centering
\begin{tabular}{||c|c|c|c||} 
\hline
 & $\alpha=0.5$ & $\alpha=1$ & $\alpha=2$\\
\hline
new method rate & -0.419 (-0.400) & -0.499 (-0.400) & -0.408 (-0.400)\\
\hline
conv method rate & -0.375 (-0.333) & -0.329 (-0.333)& -0.271 (-0.250) \\
\hline
\end{tabular}
\caption{Empirical convergence rates for both methods for different choices of $\alpha$, with $s=0.4$ fixed. Numbers in parentheses are the corresponding theoretical convergence rates.}
\label{tab:newsummary}
\end{table}

\section{Discussions and Open Questions}
We focused on the information-theoretic limits of minimax estimation for the tail dependency structure in a class of non-parametric linear factors models. We established the minimax upper bound by analyzing the convergence rate of the estimator comprising empirical angular measure under the Wasserstein distance. The minimax lower bound was also established based on information theory. The comparison between the minimax upper and lower bounds 
reveals qualitative insights into when the straightforward Peak-over-Threshold method is optimal. We found that when the model has a large magnitude of deviation from the Paretos, quantified by $0<s\leq\frac{1}{2+\max\{1,\alpha\}}$, the straightforward estimator using the empirical thresholded samples is minimax optimal. However, when the model has only a small magnitude of deviation, quantified by $s>\frac{1}{2+\max\{1,\alpha\}}$, this estimator is minimax inefficient. We improved the conventional approach with a novel two-step estimator that decomposes the problem into one-dimensional estimating equations; this estimator attains the minimax lower bound rate. In our proposed estimator, a technical challenge arose in estimating the normalizing constant in the tail of $Z$, which we have chosen to leave unaddressed in this work. Hence, finding good estimators for this normalizing constant is a prospective future direction. The present paper also assumes knowledge of the regularly varying index $\alpha$ of each marginal of the latent factors and the knowledge of the deviation parameter $s$. Hence, it is a prospective direction to analyze minimax convergence rates without such knowledge and develop optimal corresponding \textit{adaptive} statistical procedures.


Multidimensional extreme value theory is an area that, over the years, has attracted considerable attention both in academia and industry. Not surprisingly, after this paper was posted on arXiv, in the last few months, additional scientific efforts were reported on topics closely related to this paper. In particular, the work of~\cite{boulin2024estimatingmaxstablerandomvectors} considers the problem of estimating the dependence structure (i.e., $A$), assuming a linear model as we do here. The estimator considered in \cite{boulin2024estimatingmaxstablerandomvectors} of threshold type, as the one studied in Theorem \ref{thm:lower}, is also studied in a high-dimensional setting and weakly dependent data, while under additional sparsity constraint of $A$. 

\bigskip
\bigskip
\bigskip
\textbf{Acknowledgments.} The material in this paper is based upon work supported by the Air Force Office of Scientific Research under award number FA9550-20-1-0397. J. Blanchet acknowledges additional support from NSF 1915967, 2118199, 2229012, 2312204.

\newpage
\appendix
\section{Appendix}

\subsection{Proofs of Results in Section~\ref{sec:setting}}\label{ap:settingproof}
\begin{proof}[Proof of Proposition~\ref{prop:relationAKA} (ii)] 
\textbf{Step 1: Analysis of the likelihood functions.} By the generalized version of change-of-variables formula~\cite[Theorem~3.11]{ref:evans2015measure}, the density of $X$ in $\mathbb{R}^d_+$ can be written as
\begin{equation}\label{eq:surfaceint}
\tilde g_{A}(x) = (JA)^{-1}\int_{z\in\mathbb{R}^m_+:~ Az = x} g(z)\dd\mathcal{H}^{m-d},
\end{equation}
where
\[g(z) =\alpha^m\prod_{i=1}^m (1+z_i)^{-(\alpha+1)},\]
and $\mathcal{H}^{m-d}$ is the $(m-d)$-dimensional Hausdorff measure in $\mathbb{R}^m$. Consider two models parametrized by $A$ and a perturbation of $A$ respectively,
\[X=AZ, \quad X= (A+hB)Z,\]
where $h\in\mathbb{R}$ and $B\in\mathbb{R}^{d\times m}$. Since $JA>0$, $A$ has a row rank equal to $d$. Thus there exists $D\in\mathbb{R}^{m\times m}$, such that
\[
AD = B.
\] 
Thus, we can write the second model as
\[
X = A(I + hD) Z = A\tilde Z,
\]
where $\tilde Z = (I+hD) Z$. By the generalized version of change of variables formula~\cite[Theorem~3.11]{ref:evans2015measure}, the density of $\tilde Z$ at $z\in\mathbb{R}^m_+$ is given by 
\[
\textrm{det}\left(I + hD\right)^{-1} g\left(\left(I + hD\right)^{-1}z\right).
\]
Thus, by a change of variables again, the density of the second model at $x$ is
\[
\tilde g_{A+hB}(x)=(JA)^{-1}\int_{z\in\mathbb{R}^m_+:~ Az = x} \textrm{det}\left(I + hD\right)^{-1} g\left(\left(I + hD\right)^{-1}z\right)\dd\mathcal{H}^{m-d}.
\]

\textbf{Step 2: Expansion of the log-likelihood.} Note that  $\textrm{det}\left(I + hD\right)^{-1}$ is a smooth function of $h$ in a neighborhood of $0$, and $g\left(\left(I + hD\right)^{-1}z\right)$ is smooth jointly in $h$ (in a neighborhood of $0$) and $z$. Since the region of integration $\{z\in\mathbb{R}^m_+:~ Az = x\}$ is compact, we have that $\tilde g_{A+hB}(x)$ is second-order continuously differentiable in $h$. We consider expanding $\log \tilde g_{A+hB}(x)$ around $\log\tilde g_A(x)$ for small values of $h$.
Note that
\[
\textrm{det}\left(I + hD\right)^{-1} = \prod_{i=1}^m ( 1+ h\lambda_i)^{-1} = \prod_{i=1}^m (1-h\lambda_i + h^2 \lambda_i^2 + O(h^3)),
\]
where $\lambda_i$'s are the eigenvalues of $D$, and
\begin{align*}
 g\left(\left(I + hD\right)^{-1}z\right) & = g(z) + \nabla g(z) \left(\left(I + hD\right)^{-1} - I\right) z \\
  & \quad + \frac{1}{2} z^\top \left(\left(I + hD\right)^{-1} - I\right)^\top \nabla g^2(z) \left(\left(I + hD\right)^{-1} - I\right) z + O(h^3),
\end{align*}
where we can further simplify
\[
\left(I + hD\right)^{-1} - I = \sum_{p=1}^\infty (-1)^ph^pD^p .
\]
The convergence is guaranteed for small $h$. To compute the Fisher information matrix, it suffices to compute the second moment of the first differential, when expanding $\log \tilde g_{A+hB}(x)$ around $\log\tilde g_A(x)$. We note that
\begin{align*}
     &\quad \log \tilde g_{A+hB}(x)- \log\tilde g_A(x) \\
     & = \log \int_{z\in\mathbb{R}^m_+:~ Az = x} \det\left(I + hD\right)^{-1} g\left(\left(I + hD\right)^{-1}z\right)\dd\mathcal{H}^{m-d} - \log \int_{z\in\mathbb{R}^m_+:~ Az = x} g(z)\dd\mathcal{H}^{m-d}\\
     & = \log \int_{z\in\mathbb{R}^m_+:~ Az = x} \left(\prod_{i=1}^m (1-h\lambda_i +  O(h^2))\right) \left(g(z) + \nabla g(z) \left(-hD\right) z + O(h^2)\right)\dd\mathcal{H}^{m-d} \\
     & \quad - \log \int_{z\in\mathbb{R}^m_+:~ Az = x} g(z)\dd\mathcal{H}^{m-d}\\
     & = \log \int_{z\in\mathbb{R}^m_+:~ Az = x} \left(g(z) - h \cdot \Tr(D)g(z)  - h \nabla g(z) Dz \right)\dd\mathcal{H}^{m-d} + O(h^2) \\ 
     &\quad -\log \int_{z\in\mathbb{R}^m_+:~ Az = x} g(z)\dd\mathcal{H}^{m-d}  \\
     & = \log \left(1 - h \frac{\int_{z\in\mathbb{R}^m_+:~ Az = x}  \left(\Tr(D)g(z)  +  \nabla g(z) Dz \right)\dd\mathcal{H}^{m-d}}{\int_{z\in\mathbb{R}^m_+:~ Az = x} g(z)\dd\mathcal{H}^{m-d}}\right) + O(h^2)\\
     & = - h \frac{\int_{z\in\mathbb{R}^m_+:~ Az = x}  \left(\Tr(D)g(z)  +  \nabla g(z) Dz \right)\dd\mathcal{H}^{m-d}}{\int_{z\in\mathbb{R}^m_+:~ Az = x} g(z)\dd\mathcal{H}^{m-d}} + O(h^2)\\
     & = - h \left(\Tr(D) + \frac{\int_{z\in\mathbb{R}^m_+:~ Az = x}    \nabla g(z) Dz \dd\mathcal{H}^{m-d}}{\int_{z\in\mathbb{R}^m_+:~ Az = x}g(z)\dd\mathcal{H}^{m-d}}\right)  + O(h^2).
\end{align*}

\textbf{Step 3: Choice of the degenerate direction.} Note that we can also compute
\[
\frac{\partial g}{\partial z_i} =  - g(z) (\alpha+1) (1+z_i)^{-1}.
\]
 If $A_{\cdot i} = A_{\cdot j}$ for some $i\neq j$, then
\[
\int_{z\in\mathbb{R}^m_+:~ Az = x} \frac{z_i}{1+z_j}g(z)\dd\mathcal{H}^{m-d} = \int_{z\in\mathbb{R}^m_+:~ Az = x} \frac{z_j}{1+z_i}g(z)\dd\mathcal{H}^{m-d}.
\]
Choosing $D$ as the matrix where all entries are zeros except $D_{ij} = -D_{ji}\neq0$, we have
\[
\Tr(D) + \frac{\int_{z\in\mathbb{R}^m_+:~ Az = x}    \nabla g(z) Dz \dd\mathcal{H}^{m-d}}{\int_{z\in\mathbb{R}^m_+:~ Az = x}g(z)\dd\mathcal{H}^{m-d}} = 0
\]
for all $x\in\mathbb{R}_+^d$. Therefore, we conclude that the Fisher information matrix of model~\eqref{eq:XAZ} is degenerate for the direction $B=AD$. This direction is non-zero by the assumption of Lemma~\ref{lem:KAconv} that $\|A_{\cdot i}\|_1>0$. 
\end{proof}

\subsection{Proofs of Results in Section~\ref{sec:upperbound}}\label{ap:upperboundproof}
\begin{proof}[Proof of Proposition~\ref{thm:bias}]
The proof follows the steps below.

\textbf{Step 1: Reduction to Wasserstein distance in the space of $Z$.} As in the proof of Lemma~\ref{lem:KAconv}, we can rewrite
\[
\mu_A^\tau = \mathcal{L}\left(\frac{X}{\|X\|_1}\Big|\|X\|_1>\tau\right)  =  \mathcal{L}\left(A\frac{ Z}{\|Z\|_a}\Big|\|Z\|_a>\tau\right).
\]
Similarly, we rewrite
\[
K_A = \sum_{i=1}^m\frac{\|A_{\cdot i}\|_1^\alpha}{\sum_{j=1}^m\|A_{\cdot j}\|_1^\alpha}\delta_{\{\frac{A_{\cdot i}}{\|A_{\cdot i}\|_1}\}} = \sum_{i=1}^m\frac{a_i^\alpha}{\sum_{j=1}^ma_j^\alpha}\delta_{\{\frac{A_{\cdot i}}{a_i}\}}.
\]
We first consider the case that $Z_i$ are~i.i.d and follow the Pareto distribution with index $\alpha$, so that the density function of $Z$ is $g(z) = \alpha^m \prod_{i=1}^m (1+z_i)^{-(\alpha+1)}~\forall z_i\geq0$. Let $b(t) = t^{\frac{1}{\alpha}}$.
Then, by theory of regular variation~\cite[Theorem~6.1]{ref:resnick2007heavy}, 
\[
tP\left(\frac{Z}{b(t)}\in[\mathbf{0},z]^c\right)\to \sum_{i=1}^m (z_i)^{-\alpha}\quad\text{as } t\to\infty,
\]
for all $z\in\mathbb{R}^m_+\backslash\{\mathbf{0}\}$. Hence, we have
\[
\mathcal{L}\left(\frac{Z}{\|Z\|_a}\Big|\|Z\|_a>b(t)\right) \Rightarrow \mathcal{L}\left(\sum_{i=1}^m \frac{a_i^\alpha}{\sum_{j=1}^m a_j^\alpha} \delta_{\{\frac{e_{\cdot i}}{a_i}\}}\right),
\]
where $e_{\cdot i}$ is the $i$-th element of the standard basis in $\mathbb{R}^m$. 
Considering the push-forward map from $\mathbb{R}^m$ to $\mathbb{R}^d$ induced by the matrix $A$, we see that
\[
\mu_A^\tau = A_{\#}\mathcal{L}\left(\frac{Z}{\|Z\|_a}\Big|\|Z\|_a>\tau\right)\text{ and }
K_A = A_{\#}  \mathcal{L}\left(\sum_{i=1}^m \frac{a_i^\alpha}{\sum_{j=1}^m a_j^\alpha} \delta_{\{\frac{e_{\cdot i}}{a_i}\}}\right).
\]
Thus, any coupling between 
\[
\mathcal{L}\left(\frac{Z}{\|Z\|_a}\Big|\|Z\|_a>\tau\right)\text{ and }
\mathcal{L}\left(\sum_{i=1}^m \frac{a_i^\alpha}{\sum_{j=1}^m a_j^\alpha} \delta_{\{\frac{e_{\cdot i}}{a_i}\}}\right)
\]
induces a coupling between $\mu^\tau_A$ and $K_A$. Note that $\|A(z-\tilde z)\|_1 \leq \|z-\tilde z\|_a,\forall z,\tilde z \in \mathbb{R}^m_+$. Thus we have
\[
\mathbb{W}_p(\mu^\tau_A,K_A)\leq \tilde{\mathbb{W}}_p\left(\mathcal{L}\left(\frac{Z}{\|Z\|_a}\Big|\|Z\|_a>\tau\right),\mathcal{L}\left(\sum_{i=1}^m \frac{a_i^\alpha}{\sum_{j=1}^m a_j^\alpha} \delta_{\{\frac{e_{\cdot i}}{a_i}\}}\right)\right),
\]
where $\tilde{\mathbb{W}}_p$ is the Wasserstein distance in $\mathbb{R}^m$ with the ground transportation cost defined by the  $\|\cdot\|_a$ norm.

\textbf{Step 2: Computation of the Wasserstein distance $\tilde{\mathbb{W}}_p.$}
Consider the polar coordinates transformation for any $z\in\mathbb{R}^m_+$
\[
z\mapsto (r,v),\quad \text{with }r = \|z\|_a = \sum_{i=1}^m a_i
|z_i| =\sum_{i=1}^m a_i
z_i  \text{ and } \upsilon_i = \frac{z_i}{r}, i=1,\ldots,m-1.
\]
Denote by $JP$  the Jacobian determinant of this polar coordinates transformation. The inverse transformation is 
\[
z_i = r\upsilon_i \quad \forall i=1,\ldots,m-1,\qquad \text{and} \quad z_m = \frac{r - \sum_{i=1}^{m-1} a_ir \upsilon_i}{a_m}.
\]
The Jacobian matrix of the inverse transformation can be computed as
\[
\begin{pmatrix}
\upsilon_1 & r & 0 &\cdots & 0 \\
\upsilon_2 & 0 & r &\cdots & 0 \\
\vdots & \vdots & \vdots & \vdots &\vdots\\
\upsilon_{m-1} & 0 & 0 &\cdots & r\\
\frac{1-\sum_{i=1}^{m-1}a_i\upsilon_i}{a_m} & -\frac{a_1r}{a_m} & -\frac{a_2r}{a_m} & \ldots & -\frac{a_{m-1}r}{a_m}
\end{pmatrix}.
\]
The absolute value of the determinant of the above matrix is (same as $(JP)^{-1}$)
\[ (JP)^{-1}   = \sum_{i=1}^{m-1} \upsilon_i  r^{m-1} \frac{a_i}{a_m} + \frac{1-\sum_{i=1}^{m-1} a_i\upsilon_i}{a_m} r^{m-1} = \frac{r^{m-1}}{a_m}. 
\]
The density of $\frac{Z}{b(t)}$ at $z$ is $b(t)^m\alpha^m \prod_{i=1}^m (1+b(t)z_i)^{-(\alpha+1)}$,
which, in terms of the polar coordinates $z=(r,\upsilon)$, can be expressed as
\[
g_t(r,\upsilon)= (JP)^{-1}b(t)^m\alpha^m \prod_{i=1}^{m-1} (1+ b(t)\cdot r\upsilon_i)^{-(\alpha+1)}\left(1+b(t) \cdot r\frac{1 -\sum_{i=1}^{m-1} a_i \upsilon_i}{a_m}\right)^{-(\alpha+1)}.
\]
After expanding and rearranging the terms, we have the density is
\begin{align*}
    g_t(r,\upsilon) & = \frac{r^{m-1}}{a_m}\alpha^m b(t)^m  (b(t)r)^{-m(\alpha+1)}\left(\frac{1}{(b(t)r)^m} + 
\frac{1}{(b(t)r)^{m-1}} HO^{(m-1)}\cdots + \frac{1}{b(t) r} HO^{(1)} + FO\right)^{-(\alpha+1)}\\
& = \frac{\alpha^m}{a_m} b(t)^{-m\alpha} r^{-m\alpha-1}\left(\frac{1}{(b(t)r)^m} + 
\frac{1}{(b(t)r)^{m-1}} HO^{(m-1)}\cdots + \frac{1}{b(t) r} HO^{(1)} + FO\right)^{-(\alpha+1)},
\end{align*}
where 
\begin{align*}
HO^{(m-1)} &= \sum_{i=1}^{m-1} \upsilon_i + \frac{1 -\sum_{i=1}^{m-1} a_i \upsilon_i}{a_m}, \\
\vdots \\
HO^{(1)} &= \prod_{i=1}^{m-1} \upsilon_i + \sum_{i=1}^{m-1}\frac{1 -\sum_{k=1}^{m-1} a_k \upsilon_k}{a_m}\prod_{1\leq j \leq m-1,j\neq i} \upsilon_j,
\end{align*}
and
\[
FO = \frac{1 -\sum_{k=1}^{m-1} a_k \upsilon_k}{a_m}\prod_{i=1}^{m-1} \upsilon_i.
\]
Note that the quantity \[\tilde{\mathbb{W}}_p^p\left(\mathcal{L}\left(\frac{Z}{\|Z\|_a}\Big|\|Z\|_a>b(t)\right),\mathcal{L}\left(\sum_{i=1}^m \frac{a_i^\alpha}{\sum_{j=1}^m a_j^\alpha} \delta_{\{\frac{e_{\cdot i}}{a_i}\}}\right)\right)\]
can be upper bounded by $m$ disjoint integrals, with an example integral of the form
\[
\int_{1<r<\infty}\int_{\upsilon:\upsilon_i\leq\eps~\forall i=1,\ldots,m-1} \left(\sum_{i=1}^{m-1}a_i\upsilon_i\right)^pg_t(r,\upsilon)\dd\upsilon \dd r,
\]
plus the error of the probability of the set $\{(r,\upsilon):r>1,\upsilon_i\leq\eps~\forall i\leq m-1\}$ compared to the quantity $\frac{a_m^\alpha}{t}$. In the following we analyze these two terms respectively.

\textbf{Step 3: Computation of the integral.}
Let $\eps\to0$ in a slower-than-polynomial rate, e.g., $\eps =\log(t)^{-1}$. We first upper bound the density $g_t(r,v)$ to simplify the integral. For any $\mathcal{I}\subseteq\{1,\ldots,m-1\}$, we denote by $\mathcal{R}_{\mathcal{I}}$ the rectangle 
\[\mathcal{R}_\mathcal{I}=\{\upsilon: 0\leq \upsilon_i \leq \frac{1}{b(t)r}~\forall i\in\mathcal{I},\textrm{ and }\frac{1}{b(t)r}\leq \upsilon_i\leq \eps ~\forall i\notin \mathcal{I}\}.\]
Then in the rectangle $\mathcal{R}_\mathcal{I}$
\[
 g_t(r,\upsilon) \leq \Theta\left(b(t)^{-m\alpha}r^{-m\alpha-1}\left((b(t) r)^{-\text{card}(\mathcal{I})}\prod_{i\notin\mathcal{I}}\upsilon_i\right)^{-(\alpha+1)}\right),
\]
where $\text{card}(\mathcal{I})$ denotes the cardinality of the set $\mathcal{I}$. Hence integrating over the rectangle $\mathcal{R}_\mathcal{I}$, we have
\begin{align*}
& \int_{\upsilon:\upsilon\in\mathcal{R}_\mathcal{I}} \left(\sum_{i=1}^{m-1}a_i\upsilon_i\right)g_t(r,\upsilon)\dd\upsilon\\
& \leq \Theta\left(b(t)^{-m\alpha + \text{card}(\mathcal{I})(\alpha+1)}r^{-m\alpha-1+\text{card}(\mathcal{I})(\alpha+1)}\right)\cdot\int_{\upsilon:\upsilon\in\mathcal{R}_\mathcal{I}} \left(\sum_{i=1}^{m-1}a_i\upsilon_i\right)\left(\prod_{i\notin\mathcal{I}}\upsilon_i\right)^{-(\alpha+1)}\dd\upsilon\\
& = \Theta\left(b(t)^{-m\alpha + \text{card}(\mathcal{I})(\alpha+1)}r^{-m\alpha-1+\text{card}(\mathcal{I})(\alpha+1)}\right)\cdot\sum_{i=1}^{m-1}a_i\int_{\upsilon:\upsilon\in\mathcal{R}_\mathcal{I}} \upsilon_i\left(\prod_{j\notin\mathcal{I}}\upsilon_j\right)^{-(\alpha+1)}\dd\upsilon.
\end{align*}
If $i\in\mathcal{I}$, then
\begin{align*}
 & \int_{\upsilon:\upsilon\in\mathcal{R}_\mathcal{I}} \upsilon_i\left(\prod_{j\notin\mathcal{I}}\upsilon_j\right)^{-(\alpha+1)}\dd\upsilon\\
 = &\int_0^{\Theta((b(t)r)^{-1})}\upsilon_i\dd\upsilon_i\cdot\prod_{j\in\mathcal{I},j\neq i} \int_0^{\Theta((b(t)r)^{-1})}1\dd\upsilon_j\cdot \prod_{j\notin \mathcal{I}} \int_{\Theta((b(t)r)^{-1})}^\eps\upsilon_j^{-(\alpha+1)}\dd\upsilon_j\\
 = & \Theta((b(t)r)^{-2})\cdot \Theta((b(t)r)^{-(\text{card}(\mathcal{I})-1)})\cdot \left(\Theta((b(t)r)^{\alpha}) - \Theta(\eps^{-\alpha})\right)^{m-1 - \text{card}(\mathcal{I})}\\
  = & \Theta\left((b(t)r)^{-2 - \text{card}(\mathcal{I}) + 1 +\alpha(m-1-\text{card}(\mathcal{I}))}\right)\\
  = & \Theta\left((b(t)r)^{\alpha m -\alpha-1 -(\alpha+1)\text{card}(\mathcal{I})}\right).
\end{align*}
On the other hand, if $i\notin\mathcal{I}$, then
\begin{align}
 & \int_{\upsilon:\upsilon\in\mathcal{R}_\mathcal{I}} \upsilon_i\left(\prod_{j\notin\mathcal{I}}\upsilon_j\right)^{-(\alpha+1)}\dd\upsilon\notag\\
 = & \int_{\Theta((b(t)r)^{-1})}^\eps\upsilon_i^{-\alpha}\dd\upsilon_i\cdot\prod_{j\in\mathcal{I}} \int_0^{\Theta((b(t)r)^{-1})}1\dd\upsilon_j\cdot \prod_{j\notin \mathcal{I},j\neq i} \int_{\Theta((b(t)r)^{-1})}^\eps\upsilon_j^{-(\alpha+1)}\dd\upsilon_j\notag\\
 = & \int_{\Theta((b(t)r)^{-1})}^\eps\upsilon_i^{-\alpha}\dd\upsilon_i\cdot\Theta((b(t)r)^{-\text{card}(\mathcal{I})})\cdot \left(\Theta((b(t)r)^{\alpha}) - \Theta(\eps^{-\alpha})\right)^{m-2 - \text{card}(\mathcal{I})}\label{eq:inotinmathcalIsimplify}.
 \end{align}
Since 
\[
\int_{\Theta((b(t)r)^{-1})}^\eps\upsilon_i^{-\alpha}\dd\upsilon_i = \begin{cases}
    \Theta(\log(b(t)r)) & \textrm{ if }\alpha = 1,\\
    \tilde\Theta(1) & \textrm{ if } 0<\alpha<1,\\
    \Theta((b(t)r)^{\alpha-1}) & \textrm{ if }\alpha > 1,
\end{cases}
\]
we can simplify equation~\eqref{eq:inotinmathcalIsimplify} as
\[
\eqref{eq:inotinmathcalIsimplify} = \begin{cases}
    \Theta(\log(b(t)r)) \cdot \Theta((b(t)r)^{\alpha m -2\alpha - (\alpha+1)\text{card}(\mathcal{I})}) & \textrm{ if }\alpha = 1,\\
   \tilde\Theta((b(t)r)^{\alpha m -2\alpha - (\alpha+1)\text{card}(\mathcal{I})}) & \textrm{ if } 0<\alpha<1,\\
    \Theta((b(t)r)^{\alpha m -\alpha - 1 - (\alpha+1)\text{card}(\mathcal{I})}) & \textrm{ if }\alpha > 1.
\end{cases}
\]
Hence, in summary, we can conclude that
\[
\int_{\upsilon:\upsilon\in\mathcal{R}_\mathcal{I}} \left(\sum_{i=1}^{m-1}a_i\upsilon_i\right)g_t(r,\upsilon)\dd\upsilon \leq \begin{cases}
      \Theta(\log(b(t)r)) \Theta(b(t)^{-2\alpha})\Theta(r^{-2\alpha-1}) & \textrm{ if }\alpha = 1,\\
      \tilde\Theta(b(t)^{-2\alpha})\Theta(r^{-2\alpha-1}) & \textrm{ if } 0<\alpha<1,\\
      \Theta(b(t)^{-\alpha-1})
    \Theta(r^{-\alpha-2}) & \textrm{ if }\alpha > 1.
\end{cases}
\]
Summing over all $\mathcal{I}$'s and $\mathcal{R}_\mathcal{I}$'s (there are only finitely many) we get
\[
\int_{\upsilon:\upsilon_i\leq\eps~\forall i} \left(\sum_{i=1}^{m-1}a_i\upsilon_i\right)g_t(r,\upsilon)\dd\upsilon \leq \begin{cases}
      \Theta(\log(b(t)r)) \Theta(b(t)^{-2\alpha})\Theta(r^{-2\alpha-1}) & \textrm{ if }\alpha = 1,\\
      \tilde\Theta(b(t)^{-2\alpha})\Theta(r^{-2\alpha-1}) & \textrm{ if } 0<\alpha<1,\\
      \Theta(b(t)^{-\alpha-1})
    \Theta(r^{-\alpha-2}) & \textrm{ if }\alpha > 1.
\end{cases}
\]
Note that the above statement holds uniformly for $r$ bounded away from $0$, therefore, integrating with respect to $r>1$, we have
\[
\int_{1<r<\infty}\int_{\upsilon:\upsilon_i\leq\eps~\forall i} \left(\sum_{i=1}^{m-1}a_i\upsilon_i\right)g_t(r,\upsilon)\dd\upsilon \dd r \leq \begin{cases}
      \Theta(\log(b(t))) \Theta(b(t)^{-2\alpha}) & \textrm{ if }\alpha = 1 , \\
      \tilde\Theta(b(t)^{-2\alpha})& \textrm{ if } 0<\alpha<1 ,\\
      \Theta(b(t)^{-\alpha-1})
     & \textrm{ if }\alpha > 1.
\end{cases}
\]
Note that since $\eps\to0$, we have that $\left(\sum_{i=1}^{m-1}a_i\upsilon_i\right)^p\leq \left(\sum_{i=1}^{m-1}a_i\upsilon_i\right)$ for $p\geq1$, and $t$ large enough. Then
\begin{equation}\label{eq:polarwassintegral}
\int_{1<r<\infty}\int_{\upsilon:\upsilon_i\leq\eps~\forall i} \left(\sum_{i=1}^{m-1}a_i\upsilon_i\right)^pg_t(r,\upsilon)\dd\upsilon \dd r \leq \begin{cases}
      \Theta(\log(b(t))) \Theta(b(t)^{-2\alpha}) & \textrm{ if }\alpha = 1, \\
      \tilde\Theta(b(t)^{-2\alpha})& \textrm{ if } 0<\alpha<1,\\
      \Theta(b(t)^{-\alpha-1})
     & \textrm{ if }\alpha > 1.
\end{cases}
\end{equation}

\textbf{Step 4: Analyzing the probability of $\{(r,\upsilon):r>1,\upsilon_i\leq\eps~\forall i\}$.} 
Next, we argue that the set $\{(r,\upsilon):r>1,\upsilon_i\leq\eps~\forall i\}$ receives the probability $\frac{a_m^\alpha}{t}(1+ \tilde\Theta(b(t)^{-\min\{1,\alpha\}}))$. To see this, we work with the original Cartesian coordinates system. Note that
\[ \left\{z\in\mathbb{R}^m_+: z = (r,\upsilon), r>1,\upsilon_i\leq\eps~\forall i\leq m-1\right\} =  C_m\cup \bar{C}_m,
\]
where the union above is a disjoint union with
\[
C_m=\left\{z\in\mathbb{R}^m_+:  z_i\leq \eps~\forall i\leq m-1,z_m> \frac{1-\sum_{i=1}^m a_iz_i}{a_m}\right\},
\]
and $\bar{C}_m$ is a subset of 
\[
\bar{C} = \left\{z\in\mathbb{R}_+^m: \|z\|_a>1,z_i,z_j > \eps~\exists i\neq j\right\}.
\]
First, we have that $P(\frac{Z}{b(t)}\in\bar{C}) = \tilde O(t^{-2})$. Also, we can compute that
\begin{align*}
   &  tP\left(\frac{Z}{b(t)}\in C_m\right)\\
   = & t\int_{z_1\leq\eps}.\hss.\hss.\int_{z_{m-1}\leq \eps} P\left(Z_m >b(t)\frac{1-\sum_{i=1}^m a_iz_i}{a_m}\right) ( b(t)\alpha)^{m-1}\prod_{i=1}^{m-1} (1 + b(t) z_i)^{-(\alpha+1)} \dd z_1.\hss.\hss. \dd z_{m-1}\\
= & t\int_{z_1\leq\eps}.\hss.\hss.\int_{z_{m-1}\leq \eps} \left(1+b(t)\frac{1-\sum_{i=1}^m a_iz_i}{a_m}\right)^{-\alpha} ( b(t)\alpha)^{m-1}\prod_{i=1}^{m-1} (1 + b(t) z_i)^{-(\alpha+1)} \dd z_1.\hss.\hss. \dd z_{m-1}\\
= & a_m^\alpha \int_{z_1\leq\eps}.\hss.\hss.\int_{z_{m-1}\leq \eps} \left(1 + a_mb(t)^{-1} - \sum_{i=1}^{m-1}a_iz_i\right)^{-\alpha} ( b(t)\alpha)^{m-1}\prod_{i=1}^{m-1} (1 + b(t) z_i)^{-(\alpha+1)} \dd z_1.\hss.\hss. \dd z_{m-1}.
\end{align*}
Based on the similar idea as before, we partition $\{(z_1,\ldots,z_{m-1}):z_i\leq \eps~\forall i\}$ into disjoint rectangles according to the split $z_i\leq b(t)^{-1}$ and $b(t)^{-1}\leq z_i\leq \eps$. In particular, consider $\mathcal{I}\subseteq\{1,\ldots,m-1\}$, and the rectangle 
\[\mathcal{R}_\mathcal{I}=\{z: 0\leq z_i \leq \frac{1}{b(t)}~\forall i\in\mathcal{I},\textrm{ and }\frac{1}{b(t)}\leq z_i\leq \eps ~\forall i\notin \mathcal{I}\}.\]
Then, recall that $\eps = \log(t)^{-1}$, by Taylor expansion, 
\begin{align*}
    &\int_{(z_1,\ldots,z_{m-1})\in\mathcal{R}_\mathcal{I}} \left(1 + a_mb(t)^{-1} - \sum_{i=1}^{m-1}a_iz_i\right)^{-\alpha} ( b(t)\alpha)^{m-1}\prod_{i=1}^{m-1} (1 + b(t) z_i)^{-(\alpha+1)} \dd z_1 \ldots \dd z_{m-1}\\
     = &  \int_{(z_1,\ldots,z_{m-1})\in\mathcal{R}_\mathcal{I}} \left(1 + \Theta(b(t)^{-1}) + \sum_{i=1}^{m-1}\Theta(z_i)\right) ( b(t)\alpha)^{m-1}\prod_{i=1}^{m-1} (1 + b(t) z_i)^{-(\alpha+1)} \dd z_1 \ldots \dd z_{m-1}\\
     = & \int_{(z_1,\ldots,z_{m-1})\in\mathcal{R}_\mathcal{I}}  ( b(t)\alpha)^{m-1}\prod_{i=1}^{m-1} (1 + b(t) z_i)^{-(\alpha+1)} \dd z_1 \ldots \dd z_{m-1}\\
     &+ \Theta(b(t)^{-1})\prod_{i\in\mathcal{I}}\int_{z_i\leq b(t)^{-1}}( b(t)\alpha)(1 + b(t) z_i)^{-(\alpha+1)} \dd z_i\prod_{i\notin\mathcal{I}}\int_{b(t)^{-1}\leq z_i\leq\eps}( b(t)\alpha)(1 + b(t) z_i)^{-(\alpha+1)} \dd z_i\\
     &+ \Theta(1)\sum_{i\notin\mathcal{I}}\Bigg(\int_{b(t)^{-1}\leq z_i\leq\eps}z_i( b(t)\alpha)(1 + b(t) z_i)^{-(\alpha+1)} \dd z_i \cdot \prod_{j\in\mathcal{I}}\int_{z_j\leq b(t)^{-1}}( b(t)\alpha)(1 + b(t) z_j)^{-(\alpha+1)} \dd z_j\\
     & \qquad\qquad\cdot\prod_{j\notin\mathcal{I}, j\neq i}\int_{b(t)^{-1}\leq z_j\leq\eps}( b(t)\alpha)(1 + b(t) z_j)^{-(\alpha+1)} \dd z_j\Bigg).
\end{align*}
We note that 
\begin{align*}
 &\sum_{\mathcal{R}_\mathcal{I}} \int_{(z_1,\ldots,z_{m-1})\in\mathcal{R}_\mathcal{I}}  ( b(t)\alpha)^{m-1}\prod_{i=1}^{m-1} (1 + b(t) z_i)^{-(\alpha+1)} \dd z_1 \ldots \dd z_{m-1}\\ = 
 &\int_{z_1,\ldots,z_{m-1}\leq\eps}  ( b(t)\alpha)^{m-1}\prod_{i=1}^{m-1} (1 + b(t) z_i)^{-(\alpha+1)} \dd z_1 \ldots \dd z_{m-1}\\
 = & (1- (1+b(t)\eps)^{-\alpha})^{m-1}\\
 = & 1 + \tilde \Theta(b(t)^{-\alpha}),
\end{align*}
and
\begin{align*}
 &
\Theta(b(t)^{-1})\prod_{i\in\mathcal{I}}\int_{z_i\leq b(t)^{-1}}( b(t)\alpha)(1 + b(t) z_i)^{-(\alpha+1)} \dd z_i\prod_{i\notin\mathcal{I}}\int_{b(t)^{-1}\leq z_i\leq\eps}( b(t)\alpha)(1 + b(t) z_i)^{-(\alpha+1)} \dd z_i\\
= &\Theta(b(t)^{-1}) (1 - (1 +\Theta(1))^{-\alpha})^{\text{card}(\mathcal{I})}( (1 +\Theta(1))^{-\alpha} - (1+b(t)\eps)^{-\alpha})^{m-1-\text{card}(\mathcal{I})}\\
 = & \Theta(b(t)^{-1}).
\end{align*}
Moreover, if $\text{card}(\mathcal{I})< m-1$ and $i\notin\mathcal{I}$, then
\begin{align*}
    & \int_{b(t)^{-1}\leq z_i\leq\eps}z_i( b(t)\alpha)(1 + b(t) z_i)^{-(\alpha+1)} \dd z_i\\
    = & \Theta(b(t)^{-1})\int_{1\leq \tilde z_i\leq b(t)\eps}\tilde z_i\alpha(1 + \tilde z_i)^{-(\alpha+1)} \dd\tilde z_i\qquad\textrm{ ``change of variables }\tilde z_i = b(t)z_i\textrm{''}\\
     = & \Theta(b(t)^{-1})\begin{cases}
     \Theta((b(t)\eps)^{1-\alpha}) & \textrm{ if }0<\alpha<1\\
    \Theta(\log(b(t)\eps) & \textrm{ if }\alpha=1\\
    \Theta(1) & \textrm{ if }\alpha>1
 \end{cases}\\
 = &  \begin{cases}
     \tilde\Theta((b(t)^{-\alpha}) & \textrm{ if }0<\alpha<1,\\
     \tilde\Theta(b(t)^{-1}) & \textrm{ if }\alpha=1,\\
     \Theta(b(t)^{-1}) & \textrm{ if }\alpha>1.
 \end{cases}
\end{align*}
Hence, in summary, after summing over all $\mathcal{I}$'s and $\mathcal{R}_{\mathcal{I}}$'s (there are only finitely many), we can conclude that
\begin{equation}\label{eq:cartesionproborder}
tP\left(\frac{Z}{b(t)}\in C_m\right) =  \begin{cases}
      a_m^\alpha(1+\tilde\Theta(b(t)^{-1})) & \textrm{ if }\alpha = 1,\\
      a_m^\alpha(1+\tilde\Theta(b(t)^{-\alpha})) & \textrm{ if } 0<\alpha<1. \\
      a_m^\alpha(1+\tilde\Theta(b(t)^{-1})) & \textrm{ if }\alpha > 1.
\end{cases}
\end{equation}
It is easy to see that  equations~\eqref{eq:polarwassintegral},~\eqref{eq:cartesionproborder}, and the fact that $P(\frac{Z}{b(t)}\in\bar{C}) = \tilde O(t^{-2})$,  are uniform for $A\in\mathcal{A}$, since $a_i$'s are uniformly bounded. We then have, by definition of Wasserstein distance, for any $p\geq1$, 
\[\sup_{A\in\mathcal{A}}\tilde{\mathbb{W}}_p^p\left(\mathcal{L}\left(\frac{Z}{\|Z\|_a}\Big|\|Z\|_a>b(t)\right),\mathcal{L}\left(\sum_{i=1}^m \frac{a_i^\alpha}{\sum_{j=1}^m a_j^\alpha} \delta_{\{\frac{e_{\cdot i}}{a_i}\}}\right)\right)\leq \tilde\Theta(b(t)^{-\min\{1,\alpha\}}).\]
Thus letting $b(t) = \tau$ (i.e., $t = \tau^{\alpha}$), we obtain
\[\sup_{A\in\mathcal{A}}\tilde{\mathbb{W}}_p^p\left(\mathcal{L}\left(\frac{Z}{\|Z\|_a}\Big|\|Z\|_a>\tau\right),\mathcal{L}\left(\sum_{i=1}^m \frac{a_i^\alpha}{\sum_{j=1}^m a_j^\alpha} \delta_{\{\frac{e_{\cdot i}}{a_i}\}}\right)\right)\leq \tilde\Theta(\tau^{-\min\{1,\alpha\}}).\]

\textbf{Step 5: Bounding the error when $Z$ is not Pareto.} Next, we consider the case that $Z = (Z_1,\ldots,Z_m)$ has a joint density that satisfies
\[
\left|\frac{g(z) - \alpha^m\prod_{i=1}^m (1+z_i)^{-(\alpha+1)}}{\alpha^m\prod_{i=1}^m (1+z_i)^{-(\alpha+1)}}\right|\leq \xi k^{-s}~\forall z \textrm{ and } g(z) \propto\prod_{i=1}^m (1+z_i)^{-(\alpha+1)} \textrm{ if } \|z\|_1 > \zeta k^{\frac{1-2s}{\alpha}},
\]
and let $\tilde Z = (\tilde Z_1,\ldots,\tilde Z_m)$ be~i.i.d.~Pareto with index $\alpha$. 
If $\tau > u \zeta k^{\frac{1-2s}{\alpha}}$, then $\|z\|_a>\tau$ implies that $\|z\|_1> \zeta k^{\frac{1-2s}{\alpha}}$, then 
\[
 \textrm{TV}\left(
\mathcal{L}\left(\frac{Z}{\|Z\|_a}\Big|\|Z\|_a>\tau\right),
\mathcal{L}\left(\frac{\tilde Z}{\|\tilde Z\|_a}\Big|\|\tilde Z\|_a>\tau\right)\right) = 0.
\]
Otherwise $\tau\leq u \zeta k^{\frac{1-2s}{\alpha}}$ , and we have
\begin{align*}
& \textrm{TV}\left(
\mathcal{L}\left(\frac{Z}{\|Z\|_a}\Big|\|Z\|_a>\tau\right),
\mathcal{L}\left(\frac{\tilde Z}{\|\tilde Z\|_a}\Big|\|\tilde Z\|_a>\tau\right)\right)\\
& \leq \sup_{B\subseteq\mathbb{R}_+^m: ~\|z\|_a>\tau~\forall z\in B} \left|\frac{P(Z\in B)}{P(\|Z\|_a>\tau)} - \frac{P(\tilde Z\in B)}{P(\|\tilde Z\|_a>\tau)}\right|\\
& \leq  \sup_{B\subseteq\mathbb{R}_+^m: ~\|z\|_a>\tau~\forall z\in B} \left|\frac{P(\tilde Z\in B)}{P(\|Z\|_a>\tau)} - \frac{P(\tilde Z\in B)}{P(\|\tilde Z\|_a>\tau)} \right| + \frac{1}{P(\|Z\|_a>\tau)} |P(Z\in B) - P(\tilde Z\in B)|\\
&\leq \left|\frac{P(\|\tilde Z\|_a>\tau)}{P(\| Z\|_a>\tau)} - 1\right| + \frac{1}{P(\|Z\|_a>\tau)}  \sup_{B\subseteq\mathbb{R}_+^m:~ \|z\|_a>\tau~\forall z\in B} |P(Z\in B) - P(\tilde Z\in B)|.
\end{align*}
Then
\[
 \textrm{TV}\left(
\mathcal{L}\left(\frac{Z}{\|Z\|_a}\Big|\|Z\|_a>\tau\right),
\mathcal{L}\left(\frac{\tilde Z}{\|\tilde Z\|_a}\Big|\|\tilde Z\|_a>\tau\right)\right)\leq 2\xi  k^{-s}\leq 2\xi \left(\frac{\tau}{u\zeta}\right)^{-\frac{\alpha s}{1-2s}}.
\]
Finally, note that the total variation distance TV upper bounds $\tilde{\mathbb{W}}_p^p$, since the $\|\cdot\|_a$-simplex $\aleph_+^a$ is compact (in particular, has a finite diameter). Hence, in summary, we conclude that
\[
\sup_{\mathcal{L}(X)\in M}\mathbb{W}_p^p(\mu_A^\tau,K_A) \leq \tilde O\left(\tau^{-\min\{1,\alpha\}}\right) + O\left(\tau^{-\frac{\alpha s}{1-2s}}\right).
\]
This completes the proof
\end{proof}

\begin{proof}[Proof of Proposition~\ref{thm:limitingmeasureconcentrationbound}]
 By Kantorovich duality~\cite[Theorem~5.10]{ref:villani2008optimal}, given any positive value of $n_\tau$, 
 \[
 \mathbb{W}_p^p(\mathbb{K}_A^{n_\tau},K_A) = \sup_{f\in\mathcal{F}} \int f \dd \mathbb{K}_A^{n_\tau} - \int f^c \dd K_A,
 \]
where $\mathcal{F}$ denotes the collection of integrable functions on the (discrete) support of $K_A$ (denoted by $\textrm{supp}(K_A)$), and 
\[
f^c(\omega) = \sup_{\omega'\in \textrm{supp}(K_A)} \left(f(\omega') - \|\omega'-\omega\|_1^p\right)
\]
is the $c$-transform of $f$.
Without loss of generality, for all $f\in\mathcal{F}$, we require $0\leq f\leq \text{diam}_1(\aleph_+^1)^p$, and note that $\text{diam}_1(\aleph_+^1)= 2$ is the diameter of $\aleph_+^1$ measured by $\ell_1$-norm. Since $f^c\geq f$, we have
\[
\mathbb{W}_p^p(\mathbb{K}_A^{n_\tau},K_A)\leq \sup_{f\in\mathcal{F}} \int f \dd \mathbb{K}_A^{n_\tau} - \int f \dd K_A =  \sup_{f\in\tilde{\mathcal{F}}} \int f \dd\mathbb{K}_A^{n_\tau} - \int f \dd K_A,
\]
where $\tilde{\mathcal{F}}\subset\mathcal{F}$ consists of functions taking value only in $\{0,2^p\}$. By the symmetrization lemma~\cite[Proposition~4.11]{ref:wainwright2019high}, 
\[
  E_{n_\tau}\left[\sup_{f\in\tilde{\mathcal{F}}} \int f \dd\mathbb{K}_A^{n_\tau} - \int f \dd K_A\right]\leq 2 E_{n_\tau}\left[\sup_{f\in\tilde{\mathcal{F}}}\left|\frac{1}{n_\tau}\sum_{i=1}^{n_\tau} \eps^{(i)} f(Y^{(i)})\right|\right],
\]
 where $\eps^{(i)}\in\{-1,1\}$ are independent random signs. Note that the cardinality of $\tilde{\mathcal{F}}$ is $2^m$, hence by Massart's finite class bound~\cite{ref:Massart2000some}, 
 \[
 E_{n_\tau}\left[\sup_{f\in\tilde{\mathcal{F}}}\left|\frac{1}{n_\tau}\sum_{i=1}^{n_\tau} \eps^{(i)} f(Y^{(i)})\right|\right]\leq \sqrt{\frac{2^{2p+1} m\cdot \textrm{log}(2)}{n_\tau}}.
 \]
 This completes the proof. 
\end{proof}

\begin{proof}[Proof of Proposition~\ref{thm:variance}]
\textbf{Step 1: Combining the bias and variance analyses.} Note that $\text{diam}_1(\aleph_+^1)^p = 2^p$, where $\text{diam}_1(\aleph_+^1)$ is the diameter of $\aleph_+^1$ measured by $\ell_1$-norm,  hence by~\cite[Proposition~20]{ref:weed2019sharp}, for any $n\ge1$ and any $t>0$, denoting $P_{n_\tau}$ as the conditional probability, we have
\[
P_{n_\tau}(\mathbb{W}_p^p(\mathbb{P}_n^{\tau},K_A) - E_{n_\tau}[\mathbb{W}_p^p(\mathbb{P}_n^{\tau},K_A)] \geq t)  \leq \exp\left(-2^{p+1}n_\tau t^2\right).
\]
 Thus with conditional probability at least $1-\eps_1$, we have
\[
\mathbb{W}_p^p(\mathbb{P}_n^{\tau},K_A) \leq E_{n_\tau}[\mathbb{W}_p^p(\mathbb{P}_n^{\tau},K_A)]  + \sqrt{ \frac{\log\frac{1}{\eps_1}}{2^{p+1}n_\tau}}.
\]
Trivially, this statement is uniform over model family~\eqref{eq:minimaxmodel}.
Recall that for all $n_\tau>0$,
\[
E_{n_\tau}[\mathbb{W}_p^p(\mathbb{P}_n^{\tau},K_A)]\leq 2^{p-1}\left( E_{n_\tau}[\mathbb{W}_p^p(\mathbb{P}_n^{\tau},\mathbb{K}_A^{n_\tau})] +E_{n_\tau}[ \mathbb{W}_p^p(\mathbb{K}_A^{n_\tau},K_A)]\right),
\]
and
\[
E_{n_\tau}[\mathbb{W}_p^p(\mathbb{P}_n^{\tau},\mathbb{K}_A^{n_\tau})]  = \mathbb{W}_p^p(\mu_A^{\tau},K_A).
\]
Also by Proposition~\ref{thm:limitingmeasureconcentrationbound}
\[
\sup_{\mathcal{L}(X)\in M} E_{n_\tau}[\mathbb{W}_p^p(\mathbb{K}_A^{n_\tau},K_A)] \leq  \sqrt{\frac{2^{2p+3} m\cdot \log(2)}{n_\tau}}.
\]

\textbf{Step 2: Concentration of $n_\tau$ around $nP(\|X\|_1>\tau)$.} Consider the random vector $X$, such that $\mathcal{L}(X)\in M$, and 
\begin{equation}\label{eq:smallmodelisminal}
P(\|X\|_1 >\tau) = \min_{\mathcal{L}(\tilde X)\in M} P(\|\tilde X\|_1 >\tau),\quad \textrm{ for all large }\tau.
\end{equation}
Such $X$ exists, for example, by considering $X=AZ$, where $\|A_{\cdot i}\|_1 = l~\forall i$ and $Z$ having the proportionality constant $(1-\xi)$ relative to the exact Pareto density in the region $\{z:\|z\|_1>\zeta\}$. By the definition of $\tilde M_k$ in~\eqref{eq:minimaxmodel}, we also have that
\begin{equation}\label{eq:smallmodelisalsomax}
    P(\|X\|_1 >\tau)\geq\Theta(\max_{\mathcal{L}(\tilde X)\in M} P(\|\tilde X\|_1 >\tau)),\quad \tau\to\infty.
\end{equation}
We note that $\|X\|_1$ is regularly-varing with index $\alpha$. Denote by $B(t)$ the quantile function of the distribution of $\|X\|_1$, such that
\[
P(\|X\|_1>B(t)) \simeq \frac{1}{t}~\textrm{ as }t\to\infty.
\]
According to~\cite[Theorem 9.1]{ref:resnick2007heavy}, after some algebra, we have that for any $v = v(n)$ such that $1\leq v \leq n, v = o(n)$ and $v\to\infty$, 
\begin{equation}\label{eq:processconverge}
\sqrt{\frac{nP(\|X\|_1>B(n/v)y^{-\frac{1}{\alpha}})}{v}}\left(\frac{\sum_{i=1}^n1_{\{\|X^{(i)}\|_1>B(n/v)y^{-\frac{1}{\alpha}}\}}}{\sqrt{nP(\|X\|_1>B(n/v)y^{-\frac{1}{\alpha}})}} - \sqrt{nP(\|X\|_1>B(n/v)y^{-\frac{1}{\alpha}})} \right)\Rightarrow W
\end{equation}
in $D[0,\infty)$ (i.e., the Skorokhod topology), where $W= (W(y):y\geq0)$ is the Brownian motion on $[0,\infty)$. We claim that there exists a particular sequence $v = v(n)$ such that $1\leq v\leq n, v =o(n), v\to\infty$ and 
\[
B(\frac{n}{v+1})\leq \tau_n< B(\frac{n}{v})\quad\forall n\textrm{~sufficiently large}.
\]
Note that $nP(\|X\|_1>\tau_n)\to\infty$,  $P(\|X\|_1>B(n))\simeq \frac{1}{n}$ and $B(\cdot)$ is monotone, thus we have that $\tau_n< B(n)$. Moreover since $\tau_n\to\infty$, we have that $\tau_n > B(1)$. Thus there exists $1\leq v\leq n$ such that \[
B(\frac{n}{v+1})\leq \tau_n< B(\frac{n}{v})\quad\forall n.
\]
Suppose that $v\to\infty$ does not hold, then $\frac{n}{v+1}=\Theta(n)$, hence \[
P(\|X\|_1>\tau_n) \leq P(\|X\|_1>B(\frac{n}{v+1}))\simeq \frac{1}{n},
\]
a contradiction! Suppose that $v=o(n)$ does not hold, then there exists a constant $U>0$ such that $\frac{n}{v} < U$ for all large $n$, hence $\tau_n < B(U)$, another contradiction! Thus, our claim is verified. Now let 
\begin{equation}\label{eq:yform}
y^{-\frac{1}{\alpha}}  = \frac{\tau_n}{B(\frac{n}{v})},
\end{equation}
we have 
\[
\frac{B(\frac{n}{v+1})}{B(\frac{n}{v})}\leq y^{-\frac{1}{\alpha}}\leq 1.
\]
 Since the tail of $\|X\|_1$ is regularly varying, by inversion, $B(\cdot)$ is also regularly varying~\cite[Proposition~2.6(v)]{ref:resnick2007heavy}. Hence, by the uniform convergence theorem~\cite[Proposition~2.4]{ref:resnick2007heavy}.
\[
\lim_{n\to\infty}\frac{B(\frac{n}{v+1})}{B(\frac{n}{v})} =   \lim_{n\to\infty}\frac{B(\frac{v}{v+1}\frac{n}{v})}{B(\frac{n}{v})} = 1.
\]
Thus we have $\lim_{n\to\infty} y^{-\frac{1}{\alpha}}\to 1$.

Choosing $y$ as in equation~\eqref{eq:yform} and plug-in $y$ in the process convergence~\eqref{eq:processconverge}, by continuous mapping theorem~\cite[Theorem 3.1]{ref:resnick2007heavy}, we conclude that
\[
\sqrt{\frac{nP(\|X\|_1>\tau_n)}{v}}\left(\frac{n_{\tau_n}}{\sqrt{nP(\|X\|_1>\tau_n)}} - \sqrt{nP(\|X\|_1>\tau_n)} \right)\Rightarrow \mathcal{N}(0,1).
\]
Also, note that
\[
P(\|X\|_1> B(\frac{n}{v}))\leq P(\|X\|_1>\tau_n)\leq P(\|X\|_1 > B(\frac{n}{v+1})),
\]
thus
\[
\sqrt{\frac{nP(\|X\|_1>\tau_n)}{v}}\to1.
\]
Thus
\[
\frac{n_{\tau_n}}{\sqrt{nP(\|X\|_1>\tau_n)}} - \sqrt{nP(\|X\|_1>\tau_n)} \Rightarrow \mathcal{N}(0,1).
\]
Hence, given $\eps_2$, there exists a quantile of standard normal $z_{\eps_2}$, such that
\[
\lim_{n\to\infty}P\left(\frac{n_{\tau_n}}{\sqrt{nP(\|X\|_1>\tau_n)}} - \sqrt{nP(\|X\|_1>\tau_n)}\geq - z_{\eps_2}\right)\geq 1-\eps_2.
\]
In other words, with asymptotic probability at least $1-\eps_2$, we have
\begin{equation}\label{eq:asymptoticevent}
\frac{n_{\tau_n}}{nP(\|X\|_1>\tau_n)} \geq 1  - \frac{z_{\eps_2}}{\sqrt{nP(\|X\|_1>\tau_n)}}\geq \frac{1}{2} \quad\textrm{ for all large }n.
\end{equation}
Now consider any arbitrary model in the family~\eqref{eq:minimaxmodel}, denote the corresponding random element by $\tilde X$, and the number of excesses over $\tau_n$ by $\tilde n_{\tau_n}$. Then, with at least the same probability of the event~\eqref{eq:asymptoticevent}, it holds that
\[
\tilde n_{\tau_n}\geq \frac{1}{2} nP(\|X\|_1>\tau_n)\geq \Theta(nP(\|\tilde X\|_1>\tau_n)),
\]
where we used the inequalities~\eqref{eq:smallmodelisminal} and~\eqref{eq:smallmodelisalsomax}.
Finally, with an asymptotic probability of at least 
$(1-\eps_1)(1-\eps_2)$, uniformly over $\mathcal{L}(\tilde X)\in M$, we have
\begin{align*}
\mathbb{W}_p^p(\mathbb{P}_n^{\tau_n},K_A)  & \leq 2^{p-1}\left( \mathbb{W}_p^p(\mu_A^{\tau_n},K_A) +\sqrt{\frac{2^{2p+3} m\cdot \textrm{log}(2)}{\Theta(nP(\|\tilde X\|_1>\tau_n))}}\right)\\
& +   \frac{1}{\sqrt{\Theta(nP(\|\tilde X\|_1>\tau_n))}}\sqrt{ \frac{\log\frac{1}{\eps_1}}{2^{p+1} }}.
\end{align*}
Sending $\eps_2\to0$ concludes the proof.
\end{proof}

\begin{proof}[Proof of Proposition~\ref{prop:sharpupperbound}]
Consider $d=m=2$, $A = \begin{pmatrix} a_1 & 0 \\ 0 & a_2\end{pmatrix}$, and $Z$ follows Pareto distribution and~i.i.d.~across dimensions. For more general cases of $m\geq d\geq 2$, we let 
\[
A = \left(\begin{matrix}
a_1 & \ldots & 0 \\
0 & \ldots & 0 \\
\vdots & \ddots & \vdots\\
0 & \ldots & a_d
\end{matrix} \,\Bigg | \begin{matrix}
1 & \ldots & 1 \\
0 & \ldots & 0 \\
\vdots & \ddots & \vdots\\
0 & \ldots & 0
\end{matrix}
\right),
\]
and follow similar arguments given here.

\textbf{Case 1: $\alpha>1$.} We first strengthen the statement~\eqref{eq:cartesionproborder} in the proof of Proposition~\ref{thm:bias} for $\alpha>1$. Following the notation outlined in the proof of Proposition~\ref{thm:bias}, we note that
\begin{align*}
   &  tP\left(\frac{Z}{b(t)}\in C_2\right)\\
   = & t\int_{z_1\leq\eps} P\left(Z_2 >b(t)\frac{1-a_1z_1}{a_2}\right) ( b(t)\alpha)(1 + b(t) z_1)^{-(\alpha+1)} \dd z_1\\
= & t\int_{z_1\leq\eps} \left(1+b(t)\frac{1-a_1z_1}{a_2}\right)^{-\alpha} ( b(t)\alpha)(1 + b(t) z_1)^{-(\alpha+1)} \dd z_1\\
= & a_2^\alpha \int_{z_1\leq\eps}\left(1 + b(t)^{-1} - a_1z_1\right)^{-\alpha} ( b(t)\alpha)(1 + b(t) z_1)^{-(\alpha+1)} \dd z_1.
\end{align*}
Then, recall that $\eps = \Theta(\log(t)^{-1})$, by Taylor expansion, 
\begin{align*}
    &\int_{z_1\leq\eps}\left(1 + b(t)^{-1} - a_1z_1\right)^{-\alpha} ( b(t)\alpha)(1 + b(t) z_1)^{-(\alpha+1)} \dd z_1\\
     = &  \int_{z_1\leq\eps}\left(1 -\alpha b(t)^{-1} +\alpha a_1z_1+\textit{``higher-order-terms''}\right)( b(t)\alpha)(1 + b(t) z_1)^{-(\alpha+1)} \dd z_1.
     \end{align*}
We note that
\[\left(1-\alpha b(t)^{-1}\right)\int_{z_1\leq\eps} ( b(t)\alpha)(1 + b(t) z_1)^{-(\alpha+1)} \dd z_1 = \left(1-\alpha b(t)^{-1}\right)\left(1-\tilde \Theta(b(t)^{-\alpha})\right).
\]
From the analysis in the proof of Proposition~\ref{thm:bias}, we know that 
\[
\int_{z_1\leq\eps}z_1b(t)\alpha(1+b(t)z_1)^{-(\alpha+1)} \dd z_1 = \Theta(b(t)^{-1}).
\]
Hence,
\begin{align*}
   &  tP\left(\frac{Z}{b(t)}\in C_2\right)\\
  = & a_2^\alpha \left(1 +\alpha a_1 \int_{z_1\leq\eps}z_1b(t)\alpha(1+b(t)z_1)^{-(\alpha+1)} \dd z_1 -\alpha b(t)^{-1}+ \textit{``higher-order-terms''} \right).
\end{align*}
Similarly, we can also obtain that
\begin{align*}
   &  tP\left(\frac{Z}{b(t)}\in C_1\right)\\
  = & a_1^\alpha \left(1 +\alpha a_2 \int_{z_2\leq\eps}z_1b(t)\alpha(1+b(t)z_2)^{-(\alpha+1)} \dd z_2 -\alpha b(t)^{-1} + \textit{``higher-order-terms''} \right).
\end{align*}
Thus, combined with the analysis in the proof of Proposition~\ref{thm:bias}, we know that there exists $a_1,a_2$, such that $A\in\mathcal{A}$, and
\[
P \left(\frac{X_1}{\|X\|_1} \in \left(0,\eps\right) \Bigg| \|X\|_1>b(t)\right)   =  \frac{a_2^\alpha}{a_1^\alpha+a_2^\alpha}\left(1+\Theta(b(t)^{-1}\right).
\]
Trivially, we require that $\tau_n\to\infty$ to achieve an error that converges to zero. Suppose that there exists a subsequence $\tau_{n_j},j=1,\ldots,\infty$ such that $\tau_{n_j}=O(n_j^{\frac{1}{2+\alpha}})$. By Glivenko–Cantelli theorem~\cite[Theorem~19.1]{ref:vaart1998asymptotic},
\[
\frac{\int_{\omega\in\aleph_+^1} 1_{\{\omega_1\in\left(0,\eps\right)\}}\mathrm{d} \mathbb{P}_{n_j}^{\tau_{n_j}} }{P \left(\frac{X_1}{\|X\|_1} \in \left(0,\eps\right) \Bigg| \|X\|_1>\tau_{n_j}\right)} \to 1\quad\text{in probability}\quad\text{as } j\to\infty.
\]
Hence, with an asymptotic probability of $1$, there will be a mass at least of order $\Theta(\tau_{n_j}^{-1})=\Theta(n_j^{-\frac{1}{2+\alpha}})$ that is misplaced with a distance bounded from zero (since $P(\frac{Z}{b(t)}\in\bar{C}) = \tilde O(t^{-2})$ from the analysis in the proof of Proposition~\ref{thm:bias}).  By the definition of $\mathbb{W}_p^p(\mathbb{P}_n^{\tau_n},K_A) $, and the condition stated in Proposition~\ref{prop:sharpupperbound}, we have that $\beta_{n_j}^p\geq\Theta(n_j^{-\frac{1}{2+\alpha}})$.

\textbf{Case 2: $\alpha\leq1$.} Next, we consider $\alpha\leq1$, and let $a_1=a_2=1$. Suppose that there exists a subsequence $\tau_{n_j},j=1,\ldots,\infty$ such that $\tau_{n_j}=O(n_j^{\frac{1}{3\alpha}})$. By similar analysis in the proof of Proposition~\ref{thm:bias}, we can compute
\[
P \left(\frac{X_1}{\|X\|_1} \in \left(\frac{1}{4},\frac{3}{4}\right) \Bigg| \|X\|_1>\tau_{n_j}\right)   =  \Theta(\tau_{n_j}^{-\alpha}).
\]Let $\phi(\omega) = (\omega_1-\frac{1}{2})^2$ be defined on $\aleph_+^1$. The $c$-transform of $\phi$ is given by (see~\cite[Definition 5.7]{ref:villani2008optimal})
\[
\phi^c(\omega') = \sup_{\omega\in\aleph_+^1} \left(\phi(\omega) - \|\omega-\omega'\|_1^p\right) = \sup_{\omega\in\aleph_+^1} \left((\omega_1-\frac{1}{2})^2 - \|\omega-\omega'\|_1^p\right).
\]
In particular, $\phi((0,1)) = \phi((1,0)) = \frac{1}{4}$, $\sup_{\omega'} \phi^c(\omega')\leq\frac{1}{4}$, and 
\[
\sup_{\omega'\in\aleph_+^1: \omega'_1 \in \left(\frac{1}{4},\frac{3}{4}\right)} \phi^c(\omega') <\frac{1}{4}.
\]
Hence, by Kantorovich duality~\cite[Theorem 5.10~(i)]{ref:villani2008optimal},
\[
\mathbb{W}_p^p(\mathbb{P}_n^{\tau_{n_j}},K_A) \geq    \int_{\aleph_+^1} \phi \mathrm{d}K_A- \int_{\aleph_+^1} \phi^c \mathrm{d} \mathbb{P}_{n_j}^{\tau_{n_j}}.
\]
Note that $K_A$ is a two-point distribution supported in the set $\{(0,1),(1,0)\}$, therefore \[\int_{\aleph_+^1} \phi \mathrm{d}K_A = \max_{\omega\in \aleph_+^1} \phi(\omega) = \frac{1}{4}.\] Thus we get from the condition stated in Proposition~\ref{prop:sharpupperbound}
\[
 \liminf_{j\to\infty} P\left(\left(\frac{1}{4}- \int_{\aleph_+^1} \phi^c \mathrm{d} \mathbb{P}_{n_j}^{\tau_{n_j}} \right) \leq C_\eps \beta_{n_j}^p
   \right)\geq1-\eps.
\]
By the analysis in the proof of Proposition~\ref{thm:variance} (those following equation~\eqref{eq:processconverge}), we know that 
\[
\frac{\int_{\omega\in\aleph_+^1} 1_{\{\omega_1\in\left(\frac{1}{4},\frac{3}{4}\right)\}}\mathrm{d} \mathbb{P}_{n_j}^{\tau_{n_j}} }{P \left(\frac{X_1}{\|X\|_1} \in \left(\frac{1}{4},\frac{3}{4}\right) \Bigg| \|X\|_1>\tau_{n_j}\right)} \to 1\quad\text{in probability}\quad\text{as } j\to\infty.
\]
Thus we conclude that $\beta_{n_j}^p\geq\Theta(n_j^{-\frac{1}{3}})$.

\textbf{Case 3: Assumption on the subsequence $\tau_{n_j}$ fails in Cases 1\&2.} Finally, consider all $\alpha>0$, and without loss of generality we can now assume 
\[\liminf_{n\to\infty}\frac{\tau_n}{n^{\frac{1}{\min\{2+\alpha,3\alpha\}}}}=\infty.\] 
Consider $a_1=a_2=1$, then by symmetry
\[
P \left(\frac{X_1}{\|X\|_1} \in \left(0,\frac{1}{2}\right) \Bigg| \|X\|_1>\tau_{n}\right)   =  \frac{1}{2}.
\]
Thus, conditioning on $n_{\tau_n}$, the misplaced mass from $(0,\frac{1}{2})$ to $(\frac{1}{2},0)$ (or vice versa) has an order $O_p(\frac{1}{\sqrt{n_{\tau_n}}})$. Hence $E_{n_{\tau_n}}[\mathbb{W}_p^p(\mathbb{P}_n^{\tau_n},K_A)]  \geq\Theta(\frac{1}{\sqrt{n_{\tau_n}}})$. Note that by bounded difference inequality (McDiarmid's inequality)~\cite{ref:mcdiarmid1989bounded}, for any $n\ge1$ and any $t>0$,
\[
P_{n_\tau}(\mathbb{W}_p^p(\mathbb{P}_n^{\tau},K_A) - E_{n_\tau}[\mathbb{W}_p^p(\mathbb{P}_n^{\tau},K_A)] \leq -t)  \leq \exp\left(-2^{p+1}n_\tau t^2\right).
\]
 Thus, with conditional probability at least $1-\eps_1$, we have
\[
\mathbb{W}_p^p(\mathbb{P}_n^{\tau},K_A) \geq E_{n_\tau}[\mathbb{W}_p^p(\mathbb{P}_n^{\tau},K_A)]  - \sqrt{ \frac{\log\frac{1}{\eps_1}}{2^{p+1}n_\tau}}\geq\Theta(\frac{1}{\sqrt{n_{\tau}}}),
\]
where the last inequality above holds if we choose $\eps_1$ sufficiently close to $1$. Combined with the fact that \[
\frac{n_{\tau_n}}{nP(\|X\|_1>\tau_{n})}\to 1\quad\text{in probability as }n\to\infty,
\]
that $\frac{1}{\sqrt{nP(\|X\|_1>\tau_n)}}\simeq \frac{1}{\sqrt{n\tau_n^{-\alpha}}}$, and the condition stated in Proposition~\ref{prop:sharpupperbound}, we conclude that $\liminf_{n\to\infty} n^{\frac{1}{2+\max\{1,\alpha\}}}\beta_n^p =\infty.$

\end{proof}

\subsection{Proof of Results in Section~\ref{sec:lowerbound}}\label{ap:lowerboundproof}
\begin{proof}[Proof of Lemma~\ref{lem:changeofmeasure}]
Let $X^{(1)},\ldots,X^{(n)}$ be i.i.d.~samples with the common marginal law $\mathcal{L}(X)$ and $\check X^{(1)},\ldots,\check X^{(n)}$ be i.i.d.~samples with the common marginal law $\mathcal{L}(\check X)$. 
For an arbitrary estimator $\hat{K}$, by a change of measure
\begin{align*}
 &\quad P^2(\mathbb{W}_p^p(\hat{K}(\check X^{(1)},\ldots,\check X^{(n)}),K_{\check A})\leq\beta_n^p)\\
&\leq P(\mathbb{W}_p^p(\hat{K}(X^{(1)},\ldots,X^{(n)}),K_{\check A})\leq\beta_n^p)\cdot\prod_{i=1}^n E\left[ \left(\frac{\check f(X^{(i)})}{f(X^{(i)})}\right)^2\right]  \\
& \leq C^{'} P(\mathbb{W}^p_p(\hat{K}(X^{(1)},\ldots,X^{(n)}),K_{\check A})\leq\beta_n^p),
\end{align*}
for some constant $C^{'}>0$, where we have used the Cauchy–Schwarz inequality in the first inequality and the condition $E\left[\left(\frac{\check f(X)}{f(X)}\right)^2\right]= 1+O(n^{-1})$ in the last inequality. Suppose that the estimator $\hat{K}$ satisfies both
\[
\liminf_{n\to\infty}P(\mathbb{W}_p^p(\hat{K}(\check X^{(1)},\ldots,\check X^{(n)}),K_{\check A})\leq\beta_n^p) =  1 ,
\]
and
\[
\liminf_{n\to\infty}P(\mathbb{W}_p^p(\hat{K}(X^{(1)},\ldots,X^{(n)}),K_A)\leq\beta_n^p) =  1 .
\]
Then, with a positive probability, it holds simultaneously that
\[
\mathbb{W}^p_p(\hat{K}(X^{(1)},\ldots,X^{(n)}),K_{\check A})\leq\beta_n^p\quad\text{and}\quad\mathbb{W}_p^p(\hat{K}(X^{(1)},\ldots,X^{(n)}),K_A)\leq\beta_n^p. 
\]
Then, by the triangle inequality, it must be that
\[
\mathbb{W}_p(K_A,K_{\check A})\leq 2\beta_n.
\]
\end{proof}
\begin{proof}[Proof of Theorem~\ref{thm:lowerbound}]
We follow the likelihood ratios approach in~\cite{ref:hall1984best}. Consider two models
\[
X^{(j)} = A^{(j)} Z^{(j)},\quad j =1,2.
\]
Note that we have abused notations slightly, in that $X^{(1)}$ and $X^{(2)}$ are not~i.i.d.~samples, but instead follow different distributions. Similarly for $Z^{(1)}$ and $Z^{(2)}$. 

\textbf{Step 1: Choice of $A^{(1)}$ and $A^{(2)}$.} We require that 
\[
\frac{A^{(1)}_{\cdot i}}{\|A^{(1)}_{\cdot i}\|_1} = \frac{A^{(2)}_{\cdot i}}{\|A^{(2)}_{\cdot i}\|_1}  = h_i,\quad i=1,\ldots,m,
\]
and that
\[
\min_{i\neq j} \|h_i - h_j\|_1 = \delta(m),
\]
where we denote by $\delta(m)$ the packing radius of $m$ points in $\aleph_+^1\subset\mathbb{R}^d$. 

Denote by $g^{(j)}$ the density of $Z^{(j)}$ in $\mathbb{R}^m_+$. For simplicity, let 
\[
a^{(j)}_i = \|A^{(j)}_{\cdot i}\|_1 \quad \forall i = 1,\ldots,m,\quad j = 1,2.
\]
Thus, one can write
\[
A^{(2)}  = A^{(1)} \textrm{diag}\left(\frac{a^{(2)}_1}{a^{(1)}_1},\ldots,\frac{a^{(2)}_m}{a^{(1)}_m}\right).
\]
Define 
\[
\tilde Z^{(2)} = \textrm{diag}\left(\frac{a^{(2)}_1}{a^{(1)}_1},\ldots,\frac{a^{(2)}_m}{a^{(1)}_m}\right) Z^{(2)},
\]
so that $X^{(2)} = A^{(2)}Z^{(2)} = A^{(1)} \tilde Z^{(2)}$,
and the density of $\tilde Z^{(2)}$ at $z$ in $\mathbb{R}^m_+$ is given by the generalized version of change of variables~\cite[Theorem~3.11]{ref:evans2015measure}
\begin{align*}
& \left(J\textrm{diag}\left(\frac{a^{(2)}_1}{a^{(1)}_1},\ldots,\frac{a^{(2)}_m}{a^{(1)}_m}\right)\right)^{-1} g^{(2)} \left(\frac{a^{(1)}_1}{a^{(2)}_1}z_1,\ldots,\frac{a^{(1)}_m}{a^{(2)}_m}z_m\right) \\
 =  & \left(\prod_{i=1}^m \frac{a_i^{(1)}}{a_i^{(2)}} \right) g^{(2)} \left(\frac{a^{(1)}_1}{a^{(2)}_1}z_1,\ldots,\frac{a^{(1)}_m}{a^{(2)}_m}z_m\right).
\end{align*}
Also by the generalized version of change of variables~\cite[Theorem~3.11]{ref:evans2015measure}, the density of $X^{(1)}$ in $\mathbb{R}^d_+$ can be written as
\[
\tilde g^{(1)}(x) = (JA^{(1)})^{-1}\int_{z\in\mathbb{R}^m_+:~ A^{(1)}z = x} g^{(1)}(z)\dd\mathcal{H}^{m-d},
\]
and the density of $X^{(2)}$ in $\mathbb{R}^d_+$ can be written as
\[
\tilde g^{(2)}(x) = (JA^{(1)})^{-1}\int_{z\in\mathbb{R}^m_+:~ A^{(1)}z = x} \left(\prod_{i=1}^m \frac{a_i^{(1)}}{a_i^{(2)}} \right) g^{(2)} \left(\frac{a^{(1)}_1}{a^{(2)}_1}z_1,\ldots,\frac{a^{(1)}_m}{a^{(2)}_m}z_m\right) \dd\mathcal{H}^{m-d},
\]
where $\mathcal{H}^{m-d}$ is the Hausdorff measure of dimension $m-d$ in $\mathbb{R}^m$. Thus, the likelihood ratio can be written as 
\[
\frac{\tilde g^{(2)}(x)}{\tilde g^{(1)}(x)} = \frac{\int_{z\in\mathbb{R}^m_+:~ A^{(1)}z = x} \left(\prod_{i=1}^m \frac{a_i^{(1)}}{a_i^{(2)}} \right) g^{(2)} \left(\frac{a^{(1)}_1}{a^{(2)}_1}z_1,\ldots,\frac{a^{(1)}_m}{a^{(2)}_m}z_m\right)\dd\mathcal{H}^{m-d}}{\int_{z\in\mathbb{R}^m_+:~ A^{(1)}z = x} g^{(1)}(z) \dd\mathcal{H}^{m-d}}.
\]
We further choose
\[
a_i^{(1)} = 1,\quad\forall i=1,\ldots,m,
\]
and
\[
a^{(2)}_1= 1 + B\delta(m) n^{-s}, a^{(2)}_2 = 1 - B\delta(m) n^{-s}, a_i^{(2)} = 1\quad\forall i\geq3. 
\]
where $B$ is some constant.
Then by the expression given by~\eqref{eq:limitcondmeasure},
\begin{equation}\label{eq:sephypythesis}
\mathbb{W}_p^p(K_A^{(1)},K_A^{(2)}) \gtrsim n^{-s}.
\end{equation}

\textbf{Step 2: Choice of $g^{(1)}$ and $g^{(2)}$.} 
We further choose $g^{(1)}(z) = \alpha^m \prod_{i=1}^m (1+z_i)^{-(\alpha+1)}~\forall z_i\geq0$, corresponding to the product density of Paretos. We choose $g^{(2)}$ such that
\[
g^{(2)}(z) = \begin{cases}
 c \alpha^m \prod_{i=1}^m (1+z_i)^{-(\alpha+1)} & \textrm{if }  \|z\|_{1}> \tau(1 + C n^{-s}), \\
\frac{a_1^{(2)}a_2^{(2)}}{a_1^{(1)}a_2^{(1)}}\alpha^m \left(1+\frac{a^{(2)}_1}{a^{(1)}_1}z_1\right)^{-(\alpha+1)}\left(1+\frac{a^{(2)}_2}{a^{(1)}_2}z_2\right)^{-(\alpha+1)} \prod_{i=3}^m (1+z_i)^{-(\alpha+1)} & \textrm{otherwise.}
\end{cases}
\]
Note that $\|x\|_1 = \|z\|_{1}$ if $A^{(1)}z= x$. The parameters $\tau,C$ above are chosen such that 
\[
P(\|X^{(1)}\|_1\geq \tau) = P(\|Z^{(1)}\|_1\geq\tau)= \Theta(n^{-(1-2s)}),
\]
and
\[
\tau(1 + C n^{-s}) \leq \zeta n^{\frac{1-2s}{\alpha}}, \quad\frac{1}{1-B\delta(m)n^{-s}} \leq 1+ Cn^{-s},
\]
where the last condition is equivalent with $C> B\delta(m)$ for $n$ sufficiently large. The constant $c$ is determined by the requirement that $g^{(2)}$ is a probability density, i.e.,
\begin{align*}
&\int_{z\in\mathbb{R}^m_+:\|z\|_1>\tau(1+Cn^{-s})} c\alpha^m\prod_{i=1}^m (1+z_i)^{-(\alpha+1)}\dd z\\
& = \int_{z\in\mathbb{R}^m_+:\|z\|_1>\tau(1+Cn^{-s})} \frac{a_1^{(2)}a_2^{(2)}}{a_1^{(1)}a_2^{(1)}}\alpha^m \left(1+\frac{a^{(2)}_1}{a^{(1)}_1}z_1\right)^{-(\alpha+1)}\left(1+\frac{a^{(2)}_2}{a^{(1)}_2}z_2\right)^{-(\alpha+1)} \prod_{i=3}^m (1+z_i)^{-(\alpha+1)} \dd z.
\end{align*}
We claim that $c^2 \leq 1+ O(n^{-2s})$. To see this, first note that
\[
\frac{a_1^{(2)}a_2^{(2)}}{a_1^{(1)}a_2^{(1)}} = 1 - B^2\delta(m)^2n^{-2s}\leq1.
\]
Hence, 
\begin{align*}
  &\quad \int_{z\in\mathbb{R}^m_+:\|z\|_{1}>\tau(1+Cn^{-s})} c\alpha^m\prod_{i=1}^m (1+z_i)^{-(\alpha+1)}\dd z\\
& \leq \int_{z\in\mathbb{R}^m_+:\|z\|_{1}>\tau(1+Cn^{-s})}\alpha^m \left(1+\frac{a^{(2)}_1}{a^{(1)}_1}z_1\right)^{-(\alpha+1)}\left(1+\frac{a^{(2)}_2}{a^{(1)}_2}z_2\right)^{-(\alpha+1)} \prod_{i=3}^m (1+z_i)^{-(\alpha+1)} \dd z\\
&= \int_{z\in\mathbb{R}^m_+:\|z\|_{1}>\tau(1+Cn^{-s})}\alpha^m \left(1+z_1 + B\delta(m) n^{-s}z_1\right)^{-(\alpha+1)}\\
&\qquad\cdot\left(1+z_2- B\delta(m) n^{-s}z_2\right)^{-(\alpha+1)} \prod_{i=3}^m (1+z_i)^{-(\alpha+1)} \dd z\\
&= \int_{z\in\mathbb{R}^m_+:\|z\|_{1}>\tau(1+Cn^{-s})}\alpha^m \left(1 + \frac{ B\delta(m) n^{-s}z_1}{1+z_1}\right)^{-(\alpha+1)}\left(1- \frac{B\delta(m) n^{-s}z_2}{1+z_2}\right)^{-(\alpha+1)}\\
&\qquad\cdot\prod_{i=1}^m (1+z_i)^{-(\alpha+1)} \dd z.
\end{align*}
Then, we apply Taylor expansions to 
\[
\left(1 + \frac{ B\delta(m) n^{-s}z_1}{1+z_1}\right)^{-(\alpha+1)} = 1 -(\alpha+1)\frac{ B\delta(m) n^{-s}z_1}{1+z_1} + O(n^{-2s}),
\]
and
\[\quad\left(1- \frac{B\delta(m) n^{-s}z_2}{1+z_2}\right)^{-(\alpha+1)} = 1 +(\alpha+1)\frac{ B\delta(m) n^{-s}z_2}{1+z_2} + O(n^{-2s}). 
\]
We note that $z_1,z_2$ are symmetrical in the region of integration
\[
\left\{z\in\mathbb{R}^m_+:\|z\|_{1}>\tau(1+Cn^{-s})\right\}.
\]
Thus the first-order terms of $O(n^{-s})$ cancels, and the dominating order becomes $O(n^{-2s})$. This shows that $c \leq 1 + O(n^{-2s})$, which concludes our previous claim that $c^2\leq1+O(n^{-2s})$. To show that $g^{(2)}(z)$ is in the model family of $\tilde M_n$ (cf. equation~\eqref{eq:minimaxmodel}), consider any $\|z\|_1< \tau(1+Cn^{-s})$, we have
\begin{align*}
&\quad \left|\frac{g^{(2)}(z) - \alpha^m\prod_{i=1}^m (1+z_i)^{-(\alpha+1)}}{\alpha^m\prod_{i=1}^m (1+z_i)^{-(\alpha+1)}}\right| \\
& = \left|(1 - B^2\delta(m)^2n^{-2s}) \left(1 + \frac{ B\delta(m) n^{-s}z_1}{1+z_1}\right)^{-(\alpha+1)}\left(1- \frac{B\delta(m) n^{-s}z_2}{1+z_2}\right)^{-(\alpha+1)} - 1\right| \\
& \leq \xi n^{-s},
\end{align*}
for sufficiently small $B$ and sufficiently large $n$, where we used the Taylor approximation in the last step.

\textbf{Step 3: Second-moment of the likelihood ratio.} By abusing the notation slightly, we denote by $E_{X\sim \mathcal{L}( A^{(1)}Z^{(1)})} \left[ \left(\frac{\tilde g^{(2)}(X)}{\tilde g^{(1)}(X)}\right)^2\right]$ the second moment of the likelihood ratio $\frac{\tilde g^{(2)}(X)}{\tilde g^{(1)}(X)}$, under the same law of $X^{(1)}$. Note that
\begin{align*}
    & E_{X\sim \mathcal{L}( A^{(1)}Z^{(1)})} \left[ \left(\frac{\tilde g^{(2)}(X)}{\tilde g^{(1)}(X)}\right)^2\right]\\
    & =\int_{\mathbb{R}^{d}_+} \frac{\left(\tilde g^{(2)}(x)\right)^2}{\tilde g^{(1)}(x)} \dd x \\
    & = \int_{\mathbb{R}^d_+} \frac{(JA^{(1)})^{-1}\left(\int_{z\in\mathbb{R}^m_+:~ A^{(1)}z = x} \left(\prod_{i=1}^m \frac{a_i^{(1)}}{a_i^{(2)}} \right) g^{(2)} \left(\frac{a^{(1)}_1}{a^{(2)}_1}z_1,\ldots,\frac{a^{(1)}_m}{a^{(2)}_m}z_m\right) \dd \mathcal{H}^{m-d}\right)^2}{\int_{z\in\mathbb{R}^m_+:~ A^{(1)}z = x} g^{(1)}(z) \dd \mathcal{H}^{m-d}} \dd x\\
    & =  \int_{\mathbb{R}^d_+} \frac{(JA^{(1)})^{-1}\left(\int_{z\in\mathbb{R}^m_+:~ A^{(1)}z = x} \left(\prod_{i=1}^m \frac{a_i^{(1)}}{a_i^{(2)}} \right) \frac{g^{(2)} \left(\frac{a^{(1)}_1}{a^{(2)}_1}z_1,\ldots,\frac{a^{(1)}_m}{a^{(2)}_m}z_m\right)}{\sqrt{g^{(1)}(z)}}\sqrt{g^{(1)}(z)} \dd \mathcal{H}^{m-d}\right)^2}{\int_{z\in\mathbb{R}^m_+:~ A^{(1)}z = x} g^{(1)}(z) \dd \mathcal{H}^{m-d}}\dd x\\
    & \leq \int_{\mathbb{R}^d_+} (JA^{(1)})^{-1} \int_{z\in\mathbb{R}^m_+:~ A^{(1)}z = x} \left(\prod_{i=1}^m \frac{a_i^{(1)}}{a_i^{(2)}} \right)^2 \frac{\left(g^{(2)}\right)^2 \left(\frac{a^{(1)}_1}{a^{(2)}_1}z_1,\ldots,\frac{a^{(1)}_m}{a^{(2)}_m}z_m\right)}{g^{(1)}(z)}\dd\mathcal{H}^{m-d} \dd x.
\end{align*}
By the generalized version of change of variables formula~\cite[Theorem~3.11]{ref:evans2015measure}, the last display is equivalent to
\begin{align*}
     & =\int_{z\in\mathbb{R}^m_+}\left(\prod_{i=1}^m \frac{a_i^{(1)}}{a_i^{(2)}} \right)^2 \frac{\left(g^{(2)}\right)^2 \left(\frac{a^{(1)}_1}{a^{(2)}_1}z_1,\ldots,\frac{a^{(1)}_m}{a^{(2)}_m}z_m\right)}{g^{(1)}(z)} \dd z\\
    & = \int_{z\in\mathbb{R}^m_+: \|z\|_{1}\leq \tau}\left(\prod_{i=1}^m \frac{a_i^{(1)}}{a_i^{(2)}} \right)^2 \frac{\left(g^{(2)}\right)^2 \left(\frac{a^{(1)}_1}{a^{(2)}_1}z_1,\ldots,\frac{a^{(1)}_m}{a^{(2)}_m}z_m\right)}{g^{(1)}(z)} \dd z \\
    & \quad  + \int_{z\in\mathbb{R}^m_+:\|z\|_{1}> \tau}\left(\prod_{i=1}^m \frac{a_i^{(1)}}{a_i^{(2)}} \right)^2 \frac{\left(g^{(2)}\right)^2 \left(\frac{a^{(1)}_1}{a^{(2)}_1}z_1,\ldots,\frac{a^{(1)}_m}{a^{(2)}_m}z_m\right)}{g^{(1)}(z)} \dd z.
\end{align*}
Note that if  $\|z\|_{1}\leq\tau$, then
\[
\left\|\left(\frac{a^{(1)}_1}{a^{(2)}_1}z_1,\ldots,\frac{a^{(1)}_m}{a^{(2)}_m}z_m\right)\right\|_1\leq \tau(1 + C n^{-s}),
\]
and hence
\begin{align*}
     &\int_{z\in\mathbb{R}^m_+: \|z\|_{1}\leq \tau}\left(\prod_{i=1}^m \frac{a_i^{(1)}}{a_i^{(2)}} \right)^2 \frac{\left(g^{(2)}\right)^2 \left(\frac{a^{(1)}_1}{a^{(2)}_1}z_1,\ldots,\frac{a^{(1)}_m}{a^{(2)}_m}z_m\right)}{g^{(1)}(z)} \dd z\\
     & = \int_{z\in\mathbb{R}^m_+: \|z\|_{1}\leq \tau}\left(\prod_{i=1}^m \frac{a_i^{(1)}}{a_i^{(2)}} \right)^2 \frac{\left(g^{(2)}\right)^2 \left(\frac{a^{(1)}_1}{a^{(2)}_1}z_1,\ldots,\frac{a^{(1)}_m}{a^{(2)}_m}z_m\right)}{\left(g^{(1)}\right)^2(z)}g^{(1)}(z) \dd z\\
    & = \int_{z\in\mathbb{R}^m_+: \|z\|_{1}\leq \tau} g^{(1)}(z) \dd z\\
    & = P(\|Z^{(1)}\|_1\leq\tau).
\end{align*}
Moreover,
\begin{align*}
    & \qquad\int_{z\in\mathbb{R}^m_+:\|z\|_{1}> \tau}\left(\prod_{i=1}^m \frac{a_i^{(1)}}{a_i^{(2)}} \right)^2 \frac{\left(g^{(2)}\right)^2 \left(\frac{a^{(1)}_1}{a^{(2)}_1}z_1,\ldots,\frac{a^{(1)}_m}{a^{(2)}_m}z_m\right)}{g^{(1)}(z)}\dd z\\
    & = \int_{z\in\mathbb{R}^m_+:\|z\|_{1}> \tau}\left(\prod_{i=1}^m \frac{a_i^{(1)}}{a_i^{(2)}} \right)^2 \frac{\left(g^{(2)}\right)^2 \left(\frac{a^{(1)}_1}{a^{(2)}_1}z_1,\ldots,\frac{a^{(1)}_m}{a^{(2)}_m}z_m\right)}{(g^{(1)})^2(z)}g^{(1)}(z) \dd z\\
    & \leq \int_{z\in\mathbb{R}^m_+:\|z\|_{1}> \tau}\max\left\{1,c^2\left(\prod_{i=1}^m \frac{a_i^{(1)}}{a_i^{(2)}} \right)^2\left(\frac{1+\frac{a^{(1)}_1}{a^{(2)}_1}z_1}{1+z_1}\right)^{-2(\alpha+1)}\left(\frac{1+\frac{a^{(1)}_2}{a^{(2)}_2}z_2}{1+z_2}\right)^{-2(\alpha+1)}\right\} g^{(1)}(z) \dd z\\
    & \leq \max\biggl\{P(\|Z^{(1)}\|_{a^{(1)}}>\tau),\\
    & \qquad\int_{z\in\mathbb{R}^m_+:\|z\|_{1}> \tau}c^2\left(\prod_{i=1}^m \frac{a_i^{(1)}}{a_i^{(2)}} \right)^2\left(\frac{1+\frac{a^{(1)}_1}{a^{(2)}_1}z_1}{1+z_1}\right)^{-2(\alpha+1)}\left(\frac{1+\frac{a^{(1)}_2}{a^{(2)}_2}z_2}{1+z_2}\right)^{-2(\alpha+1)} g^{(1)}(z) \dd z\biggr\}.
\end{align*}
Recall that we have shown before that $c^2 \leq 1 + O( n^{-2s})$. We can calculate
\begin{align*}
 \prod_{i=1}^m\frac{a_i^{(1)}}{a_i^{(2)}} =\frac{1}{(1+B\delta(m) n^{-s})(1-B\delta(m) n^{-s})} = 1 + O( n^{-2s}).
\end{align*}
Moreover, we have
\begin{align*}
    &\int_{z\in\mathbb{R}^m_+:\|z\|_{1}> \tau}\left(\frac{1+\frac{a^{(1)}_1}{a^{(2)}_1}z_1}{1+z_1}\right)^{-2(\alpha+1)}\left(\frac{1+\frac{a^{(1)}_2}{a^{(2)}_2}z_2}{1+z_2}\right)^{-2(\alpha+1)} g^{(1)}(z) \dd z\\
    & = \int_{z\in\mathbb{R}^m_+:\|z\|_{1}> \tau}\left(1 + \frac{(\frac{a^{(1)}_1}{a^{(2)}_1}-1)z_1}{1+z_1}\right)^{-2(\alpha+1)}\left(1 + \frac{(\frac{a^{(1)}_2}{a^{(2)}_2}-1)z_2}{1+z_2}\right)^{-2(\alpha+1)} g^{(1)}(z) \dd z\\
    &=\int_{z\in\mathbb{R}^m_+:\|z\|_{1}> \tau}\left(1 + \frac{\frac{-B\delta(m)n^{-s}}{1+B\delta(m)n^{-s}}z_1}{1+z_1}\right)^{-2(\alpha+1)}\left(1 + \frac{\frac{B\delta(m)n^{-s}}{1-B\delta(m)n^{-s}}z_2}{1+z_2}\right)^{-2(\alpha+1)} g^{(1)}(z) \dd z.
\end{align*}
As before, we perform Taylor expansion on the terms
\[
\left(1 + \frac{\frac{-B\delta(m)n^{-s}}{1+B\delta(m)n^{-s}}z_1}{1+z_1}\right)^{-2(\alpha+1)} = 1  + 2(\alpha+1)\frac{B\delta(m)n^{-s}}{1+B\delta(m)n^{-s}}\frac{z_1}{1+z_1} + O(n^{-2s}),
\]
and
\[\left(1 + \frac{\frac{B\delta(m)n^{-s}}{1-B\delta(m)n^{-s}}z_2}{1+z_2}\right)^{-2(\alpha+1)} = 1 - 2(\alpha+1)\frac{B\delta(m)n^{-s}}{1-B\delta(m)n^{-s}}\frac{z_2}{1+z_2} + O(n^{-2s}).
\]
We note that 
$z_1,z_2$ are symmetrical in the region of integration
\[
\left\{z\in\mathbb{R}^m_+:\|z\|_{1}>\tau\right\}.
\]
The dominating-order terms involve
\[
\frac{B\delta(m)n^{-s}}{1+B\delta(m)n^{-s}} - \frac{B\delta(m)n^{-s}}{1-B\delta(m)n^{-s}} = O (n^{-2s}).
\]
This concludes that
\[
\int_{z\in\mathbb{R}^m_+:\|z\|_{1}> \tau}\left(\prod_{i=1}^m \frac{a_i^{(1)}}{a_i^{(2)}} \right)^2 \frac{\left(g^{(2)}\right)^2 \left(\frac{a^{(1)}_1}{a^{(2)}_1}z_1,\ldots,\frac{a^{(1)}_m}{a^{(2)}_m}z_m\right)}{g^{(1)}(z)}dz = (1 + O( n^{-2s}))P(\|Z^{(1)}\|_1>\tau).
\]
Hence, in summary, we have found that
\[
E_{X\sim \mathcal{L}( A^{(1)}Z^{(1)})} \left[ \left(\frac{\tilde g^{(2)}(X)}{\tilde g^{(1)}(X)}\right)^2\right]= 1 + O(n^{-1}).
\]

\textbf{Step 4: Standard change-of-measure arguments.} The rest of the proof is routine (see~\cite{ref:hall1984best}). Recall the separation of the two hypotheses in equation~\eqref{eq:sephypythesis}. We apply Lemma~\ref{lem:changeofmeasure} to argue that
\[
\beta_n^p \geq\frac{1}{2^p} \mathbb{W}_p^p(K_A^{(1)},K_A^{(2)})\gtrsim n^{-s}.
\]
This completes the proof.
\end{proof}

\subsection{Proof of Results in Section~\ref{sec:potentialestimator}}\label{ap:potentialestimatorproof}
\subsubsection{Proof of Results in Section~\ref{sec:potential1dim}}
\begin{proof}[Proof of Theorem~\ref{thm:oracleonedim}]
\textbf{Step 1: Analyzing the Pareto case.} Let $\{\bar Z_1^{(1)},\ldots,\bar Z_1^{(n)}\}$ be an~i.i.d.~sample of size $n$ from the Pareto distribution. Recall that if $\bar Z_1$ is distributed as a Pareto with index $\alpha$, then the density of $\bar Z_1$ admits the form
 \[
 g_1(z_1) = \alpha(1+z_1)^{-(\alpha+1)}.
 \]
 Denote by $B(t)$ the quantile function of the Pareto distribution, which is given by
\[
P(\bar Z_1>B(t)) = \frac{1}{t},~\forall t>0.
\]
According to~\cite[Theorem~9.1]{ref:resnick2007heavy}, after some algebra, we have that for any $v = v(n)$ such that $1\leq v \leq n, v = o(n)$ and $v\to\infty$, 
\begin{equation}\label{eq:processconvergeonedim}
\sqrt{\frac{nP(\bar Z_1>B(n/v)y^{-\frac{1}{\alpha}})}{v}}\left(\frac{\sum_{i=1}^n1_{\{\bar Z_1^{(i)}>B(n/v)y^{-\frac{1}{\alpha}}\}}}{\sqrt{nP(\bar Z_1>B(n/v)y^{-\frac{1}{\alpha}})}} - \sqrt{nP(\bar Z_1>B(n/v)y^{-\frac{1}{\alpha}})} \right)\Rightarrow W
\end{equation}
in $D[0,\infty)$, where $W= (W(y):y\geq0)$ is a Brownian motion on $[0,\infty)$. Choosing $\tau_n= u\zeta n^{\frac{1-2s}{\alpha}}$, we 
 rescale 
\[
y^{-\frac{1}{\alpha}}= \frac{\tau_n}{\theta_1 B(\frac{n}{v})}\tilde y^{-\frac{1}{\alpha}},
\]
then we have for $\bar X_1=\theta_1 \bar Z_1$ (and $\bar X_1^{(i)}=\theta_1 \bar Z_1^{(i)}~\forall i$),
\begin{equation}\label{eq:processconvergerescale}
    \sqrt{\frac{nP(\bar X_1>\tau_n \tilde y^{-\frac{1}{\alpha}})}{v}}\left(\frac{\sum_{i=1}^n1_{\{\bar X_1^{(i)}>\tau_n\tilde y^{-\frac{1}{\alpha}}\}}}{\sqrt{nP(\bar X_1>\tau
_n\tilde y^{-\frac{1}{\alpha}})}} - \sqrt{nP(\bar X_1>\tau_n\tilde y^{-\frac{1}{\alpha}})} \right)\Rightarrow W.
\end{equation}
Also, by a similar analysis as in the proof of Proposition~\ref{thm:variance}, for $\tau_n= \Theta(n^{\frac{1-2s}{\alpha}})$, there exists a sequence of $v=v(n)$, such that
\[
 \frac{\tau_n}{B(\frac{n}{v})}\to 1.
\]
Hence the convergence~\eqref{eq:processconvergerescale} is uniform for bounded values of $\tilde y^{-\frac{1}{\alpha}}$ and $\theta_1$. Moreover, for bounded $\tilde y^{-\frac{1}{\alpha}}$ and $\theta_1$, we have
\[
P(\bar X_1>\tau_n \tilde y^{-\frac{1}{\alpha}}) = \Theta(n^{2s-1}),\quad \sqrt{\frac{nP(\bar X_1>\tau_n \tilde y^{-\frac{1}{\alpha}})}{v}} = \Theta(1).
\]
Hence 
\[
\sup_{\theta_1,\tilde y^{-\frac{1}{\alpha}}\in\textrm{ compact subsets of }\mathbb{R}_+}
\left|\frac{\sum_{i=1}^n1_{\{\bar X_1^{(i)}>\tau_n\tilde y^{-\frac{1}{\alpha}}\}}}{n} - P(\bar X_1>\tau_n\tilde y^{-\frac{1}{\alpha}}) \right| = O_p(n^{s-1}).
\]

\textbf{Step 2: Analyzing the general case.} Next, for $X_1 = \theta_1Z_1$ where $\mathcal{L}(Z_1) \in \cup_{k=1}^\infty \tilde M^1_k$. Note that we can always choose $\tilde y$, such that 

\[
P(X_1>\tau_n) = P(\bar X_1>\tau_n\tilde y^{-\frac{1}{\alpha}})
\]
 Since
 \[
 \frac{P(X_1>\tau_n)}{P(\bar X_1>\tau_n)} = \frac{P(Z_1 >\frac{\tau_n}{\theta_1})}{P( \bar Z_1 >\frac{\tau_n}{\theta_1})}
 \]
 is bounded from $0$ and $\infty$ asymptotically as $n\to\infty$, we have that $\tilde y^{-\frac{1}{\alpha}}$ is in a compact interval for large enough $n$. 
Hence for any $\eps\in(0,1)$, there exists $t_\eps$, such that 
\begin{equation}\label{eq:uniformconvergence}
\liminf_{n\to\infty}
\inf_{\mathcal{L}(X_1)\in M^1}P\left(n^{1-s} \left|\frac{\sum_{i=1}^n1_{\{X_1^{(i)}>\tau_n\}}}{n} - P(X_1>\tau_n) \right|\leq t_\eps\right) \geq 1-\eps,
\end{equation}
where $\{X_1^{(i)},i=1,\ldots,n\}$ is an i.i.d.~sample of size $n$ from any law $\mathcal{L}(X_1)\in M^1$. Since $\frac{\tau_n}{\theta_1} > \zeta n^{\frac{1-2s}{\alpha}}$, if $\mathcal{L}(X_1)\in M_k^1$ with $k\geq n$, then we know that 
\[
P(X_1>\tau_n)  = (1+O(n^{-s}))(1+\frac{\tau_n}{\theta_1})^{-\alpha},
\]
and that 
\[
\frac{\int_{\zeta  n^{\frac{1-2s}{\alpha}}}^\infty g_1(z_1)dz_1}{(1+\zeta n^{\frac{1-2s}{\alpha}})^{-\alpha}} = 1 + O(n^{-s}).
\]
Otherwise $\mathcal{L}(X_1)\in M_k^1$ with $k<n$, then
\[
P(X_1>\tau_n) = \frac{\int_{\zeta  n^{\frac{1-2s}{\alpha}}}^\infty g_1(z_1)dz_1}{(1+\zeta n^{\frac{1-2s}{\alpha}})^{-\alpha}}(1+\frac{\tau_n}{\theta_1})^{-\alpha}.
\]
Let $\hat{r} = \hat{r} \left(X_1^{(1)},\ldots,X_1^{(n)}\right)$ be an estimator such that
\[
\liminf_{n\to\infty}
\inf_{\mathcal{L}(X_1)\in M^1}P\left(n^{s} \left|\frac{\hat{r} (1+\zeta n^{\frac{1-2s}{\alpha}})^{-\alpha}}{\int_{\zeta  n^{\frac{1-2s}{\alpha}}}^\infty g_1(z_1)dz_1}-1\right|\leq C_\eps\right) \geq 1-\eps,
\]
and let an estimate of $\theta_1$ be given by 
\[
\frac{\sum_{i=1}^n1_{\{X_1^{(i)}>\tau_n\}}}{n} = \hat{r}(1+\frac{\tau_n}{\hat\theta_1})^{-\alpha}.
\]
Then
\begin{align*}
  & \log\frac{\sum_{i=1}^n1_{\{X_1^{(i)}>\tau_n\}}}{n}  - \log P(X_1>\tau_n) \\
   = & \log\hat{r} - \log \left(\frac{\int_{\zeta  n^{\frac{1-2s}{\alpha}}}^\infty g_1(z_1)dz_1}{(1+\zeta n^{\frac{1-2s}{\alpha}})^{-\alpha}}\right)-\alpha \log\frac{1+\frac{\tau_n}{\hat\theta_1}}{{1+\frac{\tau_n}{\theta_1}}}-\log(1+O(n^{-s})).
\end{align*}
Note that $P(X_1>\tau_n)=\Theta(n^{2s-1})$. Thus, with probability at least $1-2\eps$, we have
\[
\log(1+O(n^{-s})) =\log(1+O(n^{-s})) -\alpha \log\frac{1+\frac{\tau_n}{\hat{\theta}_1}}{{1+\frac{\tau_n}{\theta_1}}}-\log(1+O(n^{-s})).
\]
Thus
\[
\log(1+O(n^{-s})) =-\alpha \log\left(1+\left(\frac{\theta_1}{\hat{\theta}_1}-1\right)\frac{\frac{\tau_n}{\theta_1}}{1+\frac{\tau_n}{\theta_1}}\right).
\]
Since $\frac{\frac{\tau_n}{\theta_1}}{1+\frac{\tau_n}{\theta_1}}\to1$ and $\theta_1\in[l,u]$, we conclude that $\hat{\theta}_1 = \theta_1 + O(n^{-s})$. 
\end{proof}

\subsubsection{Proof of Results in Section~\ref{sec:potentialmultidim}}
We develop our proposed procedure in the following steps. First, suppose that we have the perfect knowledge of $\bar{A}$, then, upon observing $X$, we compute $\bar{A}^{-1}X$ as observations for $ \diag(\theta_1,\ldots, \theta_m) Z$. We have the following result.

\begin{lemma}\label{lem:oraclebarA}
Assume we have the perfect knowledge of $\bar A$.   Let $\hat{r} = \hat{r} \left(X^{(1)},\ldots,X^{(n)}\right)$ be an estimator such that for any $\eps\in(0,1)$, there exists $C_\eps$ such that
\begin{equation}\label{eq:oraclehatr}
    \liminf_{n\to\infty}
\inf_{\mathcal{L}(X)\in M}P\left(n^{s} \left|\frac{\hat{r} \int_{z\in\mathbb{R}^m_+:\|z\|_1>\zeta n^{\frac{1-2s}{\alpha}}}\alpha^m\prod_{i=1}^m (1+z_i)^{-(\alpha+1)}\mathrm{d}z}{\int_{z\in\mathbb{R}^m_+:\|z\|_1>\zeta n^{\frac{1-2s}{\alpha}}} g(z)\mathrm{d}z}-1\right|\leq C_\eps\right) \geq 1-\eps.
\end{equation}
  Choosing $\tau_n= u\zeta n^{\frac{1-2s}{\alpha}}$, and for any $i=1,\ldots,m$, let $\hat\theta_i$ solve the estimating equation
  \[
\frac{\sum_{j=1}^n1_{\{(\bar{A}^{-1}X)_i^{(j)}>\tau_n\}}}{n} = \hat{r}(1+\frac{\tau_n}{\hat\theta_i})^{-\alpha}.
\]
Denoting $\hat\theta = (\hat\theta_1,\ldots,\hat\theta_m)$, for any $\eps\in(0,1)$, there exists $\tilde C_\eps$, such that
    \[
    \liminf_{n\to\infty}
\inf_{\mathcal{L}(X)\in M}P\left(n^{s} \|\hat\theta-\theta\|_1\leq \tilde C_\eps\right) \geq 1-\eps.
    \]
\end{lemma}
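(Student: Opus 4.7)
The plan is to reduce the task to $m$ parallel one-dimensional problems, each essentially an instance of Theorem~\ref{thm:oracleonedim}, and then combine them via a union bound. Since $\bar A$ is known exactly, the transformed observations decouple coordinate-wise: $(\bar A^{-1}X^{(j)})_i = \theta_i Z_i^{(j)}$, and the $i$-th estimating equation uses only these i.i.d.~samples in coordinate $i$.

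The key deterministic step is to relate $P(Z_i > \eta_i)$, with $\eta_i \Let \tau_n/\theta_i$ (so $\eta_i \geq \zeta n^{(1-2s)/\alpha}$), to $r_{n,\mathcal{L}(X)}(1+\eta_i)^{-\alpha}$. I would split into two cases depending on which $k$ has $\mathcal{L}(X) \in M_k$: if $k \leq n$, then $\{z: z_i > \eta_i\} \subseteq \{z: \|z\|_1 > \zeta k^{(1-2s)/\alpha}\}$ on which $g$ is \emph{exactly} Pareto-proportional with constant equal to $r_{n,\mathcal{L}(X)}$, and integrating out $z_{-i}$ gives the exact identity $P(Z_i > \eta_i) = r_{n,\mathcal{L}(X)}(1+\eta_i)^{-\alpha}$; if $k > n$, then one has $|g(z) - \alpha^m\prod_j(1+z_j)^{-(\alpha+1)}| \leq \xi k^{-s}\alpha^m\prod_j(1+z_j)^{-(\alpha+1)} \leq \xi n^{-s}(\cdots)$ everywhere, so both $P(Z_i > \eta_i)/(1+\eta_i)^{-\alpha}$ and $r_{n,\mathcal{L}(X)}$ lie within $\xi n^{-s}$ of $1$, yielding $P(Z_i > \eta_i) = r_{n,\mathcal{L}(X)}(1+\eta_i)^{-\alpha}(1+O(n^{-s}))$. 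This is the same two-case split already used in the proof of Theorem~\ref{thm:oracleonedim}.

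Next, I would show that the empirical tail count $\tfrac{1}{n}\sum_j 1_{\{(\bar A^{-1}X)_i^{(j)} > \tau_n\}}$ concentrates around $P(Z_i > \eta_i) = \Theta(n^{2s-1})$ with absolute deviation $O_p(n^{s-1})$, i.e.\ relative deviation $O_p(n^{-s})$, uniformly over $\mathcal{L}(X) \in M$. This can be done either via a direct Bernstein bound on sums of i.i.d.\ Bernoullis (the relevant variance is $\Theta(n^{2s})$), or by adapting the tail-empirical-process argument from equations~\eqref{eq:processconverge}--\eqref{eq:uniformconvergence} in the proof of Theorem~\ref{thm:oracleonedim}. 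Uniformity over $M$ is straightforward because the Bernoulli parameter is pinned down by the Pareto factor $(1+\tau_n/\theta_i)^{-\alpha}$ and the uniformly bounded quantity $r_{n,\mathcal{L}(X)} \in [1-\xi, 1+\xi]$.

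Combining the last two steps with the hypothesis $|\hat{r}/r_{n,\mathcal{L}(X)} - 1| = O_p(n^{-s})$ from~\eqref{eq:oraclehatr}, the estimating equation reads
\[
\hat{r}(1+\tau_n/\hat\theta_i)^{-\alpha} = r_{n,\mathcal{L}(X)}(1+\tau_n/\theta_i)^{-\alpha}(1+O_p(n^{-s})).
\]
Taking logarithms and using $\tau_n \to \infty$ together with $\theta_i \in [l, u]$ transfers this to $|\hat\theta_i - \theta_i| = O_p(n^{-s})$, exactly as in the closing lines of the proof of Theorem~\ref{thm:oracleonedim}; a union bound over $i = 1, \ldots, m$ (with $m$ fixed) then yields $\|\hat\theta - \theta\|_1 = O_p(n^{-s})$. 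The main obstacle, to me, is ensuring the Binomial concentration is genuinely uniform over the entire non-parametric class $M$, since the underlying success probability $p_n = \Theta(n^{2s-1})$ is itself vanishing; the saving grace is that the estimator only inspects the tail of $Z_i$, where every element of $M$ agrees with a Pareto product up to a single uniformly controlled multiplicative constant.
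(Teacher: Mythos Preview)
Your proposal is correct and follows essentially the same route as the paper. The paper's proof first shows that the marginal density $g_i$ of $Z_i$ belongs to the one-dimensional class $\tilde M_k^1$ (by integrating out $z_{-i}$ and noting $z_i > \zeta k^{(1-2s)/\alpha}$ forces $\|z\|_1 > \zeta k^{(1-2s)/\alpha}$), then verifies via the same $k<n$ versus $k\geq n$ split that the multivariate assumption on $\hat r$ implies the one-dimensional Assumption~\ref{assump:hatr1dim}, and finally invokes Theorem~\ref{thm:oracleonedim} as a black box; you instead unfold that invocation and re-derive the key steps directly, which is equivalent. Your suggestion to use a Bernstein bound for the Binomial concentration (in place of the tail-empirical-process machinery from~\eqref{eq:processconverge}--\eqref{eq:uniformconvergence}) is a legitimate and slightly more elementary alternative, since $p_n\in[c_1 n^{2s-1},c_2 n^{2s-1}]$ uniformly over $M$.
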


Next, suppose that we have an estimate of $\bar{A}$, denoted by $\tilde A$. Using $\tilde A$ as a ``plug-in'', upon observing $X$, we consider $\tilde X = \tilde {A}^{-1}X$ as noisy observations for $ \diag(\theta_1,\ldots, \theta_m) Z$.

\begin{lemma}\label{lem:pluginbarA}
Suppose there exists an estimator $\tilde A$ for $\bar{A}$, such that for any $\eps\in(0,1)$, there exists $\underline{C}_\eps$ such that 
\[
    \liminf_{n\to\infty}
\inf_{\mathcal{L}(X)\in M}P\left(n^{s} \|\tilde A - \bar A\|_1\leq \underline C_\eps\right) \geq 1-\eps.
    \]
   Further, let $\hat{r}$ be the estimator satisfying the condition~\eqref{eq:oraclehatr}.  Choosing $\tau_n= u\zeta n^{\frac{1-2s}{\alpha}}$, and for any $i=1,\ldots,m$, let $\hat\theta_i$ solve the estimating equation
  \[
\frac{\sum_{j=1}^n1_{\{\tilde X_i^{(j)}>\tau_n\}}}{n} = \hat{r}(1+\frac{\tau_n}{\hat\theta_i})^{-\alpha}.
\]
Denoting $\hat\theta = (\hat\theta_1,\ldots,\hat\theta_m)$, for any $\eps\in(0,1)$, there exists $\tilde C_\eps$, such that
    \[
    \liminf_{n\to\infty}
\inf_{\mathcal{L}(X)\in M}P\left(n^{s} \|\hat\theta-\theta\|_1\leq \tilde C_\eps\right) \geq 1-\eps.
    \]
\end{lemma}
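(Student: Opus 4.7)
The plan is to reduce the claim to Lemma~\ref{lem:oraclebarA} by quantifying the perturbation incurred when $\bar A$ is replaced by the noisy estimate $\tilde A$. I would introduce the oracle observations $Y^{(j)} = \bar A^{-1} X^{(j)} = \diag(\theta_1,\ldots,\theta_m) Z^{(j)}$, so that $Y_i^{(j)} = \theta_i Z_i^{(j)}$, and denote by $\hat\theta_i^\star$ the oracle estimator solving the same estimating equation with $Y_i^{(j)}$ in place of $\tilde X_i^{(j)}$. Lemma~\ref{lem:oraclebarA} already delivers $\|\hat\theta^\star - \theta\|_1 = O_p(n^{-s})$ uniformly over $M$, so by the triangle inequality it will suffice to establish $|\hat\theta_i - \hat\theta_i^\star| = O_p(n^{-s})$ for each coordinate $i$.

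The first step is to convert the hypothesis on $\tilde A - \bar A$ into control of $\tilde A^{-1} - \bar A^{-1}$. Writing $\tilde A^{-1} - \bar A^{-1} = \tilde A^{-1}(\bar A - \tilde A)\bar A^{-1}$ and using the uniform bound on $\|\bar A^{-1}\|$ afforded by $JA \geq \sigma$ together with the compactness of $\mathcal A$, I would obtain entrywise control of the form $\|\tilde A^{-1} - \bar A^{-1}\|_\infty \leq C_\eps n^{-s}$ with probability at least $1-\eps$, uniformly over $M$. A useful simplification is that $\|X^{(j)}\|_1 = \|\bar A Y^{(j)}\|_1 = \|Y^{(j)}\|_1$, since every column of $\bar A$ has unit $\ell_1$-norm; this yields
\[
|\tilde X_i^{(j)} - Y_i^{(j)}| \leq C_\eps n^{-s}\, \|Y^{(j)}\|_1
\]
on the same event, and will be the only input about $\tilde A$ needed in the sequel.

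The crux, and what I expect to be the main obstacle, is to bound the perturbation of the empirical tail probability
\[
\Delta_n^i := \frac{1}{n}\sum_{j=1}^n \bigl|1_{\{\tilde X_i^{(j)} > \tau_n\}} - 1_{\{Y_i^{(j)} > \tau_n\}}\bigr|
\]
by $O_p(n^{s-1})$, an order that matches the signal $P(Y_i > \tau_n) = \Theta(n^{-(1-2s)})$ up to the required relative accuracy $n^{-s}$. A sample contributes to $\Delta_n^i$ only when $|Y_i^{(j)} - \tau_n| \leq C_\eps n^{-s}\|Y^{(j)}\|_1$, and I would split contributions by whether $\|Y^{(j)}\|_1 \leq 2\tau_n$ or $\|Y^{(j)}\|_1 > 2\tau_n$. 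In the first regime the relevant band has relative width $O(n^{-s})$, so the regularly varying density of $Y_i$ at scale $\tau_n$ combined with a Bernstein-type concentration for the tail empirical process (cf.~\cite{ref:larsson2012extremal}) will give the desired bound uniformly over the non-parametric class $\tilde M_k$. For the heavy regime I would invoke the single-big-jump principle for sums of independent regularly varying summands, so that the joint event ``$Y_i$ near $\tau_n$ while some $Y_k$ with $k\neq i$ is also large'' decomposes into a product of marginal tails and contributes a lower-order term; here the constraints $s < 1/2$ and $s < 1/\alpha$ will be exactly what is needed to close both regimes.

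To conclude, I would invert the estimating equation analytically. The map $\theta \mapsto \hat r(1+\tau_n/\theta)^{-\alpha}$ has derivative of order $\Theta(\tau_n^{-\alpha}) = \Theta(n^{-(1-2s)})$ uniformly for $\theta \in [l,u]$, so an $O_p(n^{s-1})$ perturbation of its image translates to an $O_p(n^{s-1}/n^{-(1-2s)}) = O_p(n^{-s})$ perturbation of its preimage, giving $|\hat\theta_i - \hat\theta_i^\star| = O_p(n^{-s})$. Combined with Lemma~\ref{lem:oraclebarA} and summed over the $m$ coordinates, this will deliver $\|\hat\theta - \theta\|_1 = O_p(n^{-s})$ uniformly over $M$, as required.
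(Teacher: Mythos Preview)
Your overall strategy matches the paper's: reduce to the oracle setting of Lemma~\ref{lem:oraclebarA} by controlling how the empirical tail count changes when $\bar A$ is replaced by $\tilde A$, then invert the estimating equation. The execution differs in one key place. Rather than bounding $\Delta_n^i$ directly, the paper asserts the multiplicative relation $(\tilde A^{-1}X)_i=(\bar A^{-1}X)_i\bigl(1+O(n^{-s})\bigr)$ and uses it to sandwich the plugged-in indicator between the two oracle indicators $1_{\{(\bar A^{-1}X)_i>\tau_n(1\pm c\,n^{-s})\}}$. It then reuses the uniform convergence~\eqref{eq:uniformconvergence} at the two shifted thresholds and checks by a direct Pareto-tail calculation that the corresponding true tail probabilities differ by $O(n^{s-1})$. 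This shortcut avoids both your regime split and any fresh concentration argument; the only new work is the tail-difference computation.

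There is also a concrete gap in your heavy-regime step. From the additive bound $|\tilde X_i^{(j)}-Y_i^{(j)}|\le C_\eps n^{-s}\|Y^{(j)}\|_1$ alone, the contribution of samples with large $\|Y^{(j)}\|_1$ is not automatically lower order: for instance $P\bigl(\sum_{k\ne i}Y_k>\tau_n n^{s}\bigr)\asymp(\tau_n n^{s})^{-\alpha}=n^{-(1-2s+s\alpha)}$ exceeds the target $n^{s-1}$ whenever $\alpha<1$, and the two-tail event $\{Y_i\asymp\tau_n,\ \sum_{k\ne i}Y_k>\tau_n\}$ contributes $O(\tau_n^{-2\alpha})=O(n^{-2(1-2s)})$, which exceeds $n^{s-1}$ for every $s>1/3$. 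The constraints $s<1/2$ and $s<1/\alpha$ do not close these terms. To repair this you should either adopt the paper's sandwich at shifted thresholds (which absorbs the issue into the multiplicative claim), or sharpen the heavy-regime analysis by conditioning on the magnitude of $\sum_{k\ne i}Y_k$ and exploiting that the relevant band for $Y_i$ then has width $O(n^{-s}\tau_n)$ rather than $O(n^{-s}\|Y\|_1)$ on the bulk of that event.
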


Finally, we have a construction of the required estimator $\tilde A$.
\begin{lemma}\label{lem:estbarA}
    Assume $\frac{1}{2+\max\{1,\alpha\}} < s <\min\{\frac{1}{2},\frac{1}{\alpha}\}$. Let $\tau_n = \left(\frac{\log n }{n}\right)^{-\frac{1}{\alpha}}$, and let $\tilde A$ be the matrix whose columns are the $m$ cluster centers identified with k-means using $\mathbb{P}_n^{\tau_n}$. Then, for any $\eps\in(0,1)$, there exists $\underline{C}_\eps$ such that after a column-permutation of $\tilde{A}$, it holds that
\[
    \liminf_{n\to\infty}
\inf_{\mathcal{L}(X)\in M}P\left(n^{s} \|\tilde A - \bar A\|_1\leq \underline C_\eps\right) \geq 1-\eps.
    \]
\end{lemma}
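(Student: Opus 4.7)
The strategy is to show that, with threshold $\tau_n \asymp (n/\log n)^{1/\alpha}$, the empirical angular measure $\mathbb{P}_n^{\tau_n}$ consists of $n_{\tau_n} = \Theta(\log n)$ samples, \emph{every single one of which} lies within $\ell_1$-distance $\tilde O(\tau_n^{-\min\{1,\alpha\}})$ of one of the atoms $\bar A_{\cdot j}$ of $K_A$. The assumption $s<1/\alpha$ ensures $\tau_n^{-\min\{1,\alpha\}} \leq \underline C_\eps n^{-s}$ for large $n$, since the polynomial gap $n^{1/\alpha - s}$ absorbs the poly-logarithmic factors coming from the $\tilde O$. Consequently, $m$-means clustering on such a tightly concentrated dataset should return $m$ cluster centers which, after a suitable permutation, approximate the atoms to $O(n^{-s})$ in $\ell_1$.

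The plan proceeds in three steps. First, adapting the concentration argument from the proof of Proposition~\ref{thm:variance}, I show $n_{\tau_n} \asymp \log n$ with probability $\geq 1-\eps$; I also observe that for $n$ large the threshold $\tau_n$ exceeds the Pareto cutoff $\zeta k^{(1-2s)/\alpha}$ (since $1/\alpha > (1-2s)/\alpha$) for every submodel $M_k$ with $k\leq n$, while for $k>n$ the blanket bound $|g/g_{\text{Pareto}}-1|\leq \xi k^{-s}\leq \xi n^{-s}$ suffices. Either way the conditional tail of $Z$ above $\tau_n$ is Pareto to leading order. Second, I refine the Cartesian-coordinate computation in the proof of Proposition~\ref{thm:bias}: conditional on $\|X\|_1 > \tau_n$, exactly one coordinate $a_{j^*} Z_{j^*}$ dominates with probability $1 - \tilde O(\tau_n^{-2\alpha})$ (the ``bad'' event being the set denoted $\bar C$ in that proof), and on the complementary ``corner'' event a direct Taylor expansion yields $X/\|X\|_1 = \bar A_{\cdot j^*} + O(\tau_n^{-\min\{1,\alpha\}})$. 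A union bound over the $\Theta(\log n)$ thresholded samples yields a total failure probability of $\tilde O((\log n)/\tau_n^{2\alpha})$, which is negligible. Third, I invoke an elementary analysis of $m$-means clustering on data contained in $\ell_1$-balls of radius $O(n^{-s})$ around $m$ target points $\bar A_{\cdot 1},\dots,\bar A_{\cdot m}$ to conclude that the output centers lie within $O(n^{-s})$ of these targets after a suitable permutation.

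The main obstacle is the third step in the degenerate situation where two atoms $\bar A_{\cdot j}, \bar A_{\cdot j'}$ coincide, or are separated by less than the target rate $n^{-s}$, since then the optimal $m$-means partition is not uniquely determined by the atom locations. I expect to handle this by observing that, regardless of how ties are broken, every k-means center is a convex combination of samples all lying within $O(n^{-s})$ of some common atom location, and is therefore itself within $O(n^{-s})$ of every atom in that cluster; the matching permutation can then be extracted via a Hall-type marriage argument so that each $\tilde A_{\cdot \pi(j)}$ is paired with a nearby $\bar A_{\cdot j}$. Steps one and two are essentially quantitative refinements of arguments already developed for the bias and variance analysis of the Peak-over-Threshold estimator, so the bulk of the novelty of the proof lies in Step two's per-sample (rather than expectation-level) concentration around the atoms, and in the combinatorial accounting of Step three.
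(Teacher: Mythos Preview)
Your overall architecture matches the paper's: show $n_{\tau_n}=\Theta(\log n)$, show every thresholded sample lands in an $O(n^{-s})$-neighbourhood of some atom $\bar A_{\cdot j}$, then argue that $m$-means recovers the atoms. However, there is a genuine gap in your Step~2, and an unnecessary complication in your Step~3.

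\textbf{Step 2.} The claim that ``on the corner event, a direct Taylor expansion yields $X/\|X\|_1 = \bar A_{\cdot j^*} + O(\tau_n^{-\min\{1,\alpha\}})$'' is not correct as a per-sample statement. On the corner event $C_{j^*}$ (all $\upsilon_i\le\eps$ for $i\neq j^*$ in the polar coordinates of Proposition~\ref{thm:bias}), the distance $\|X/\|X\|_1-\bar A_{\cdot j^*}\|_1$ equals $\Theta(\sum_{i\ne j^*}a_i\upsilon_i)$, which is only bounded by $O(\eps)=O((\log\tau_n^\alpha)^{-1})$ deterministically. The quantity $\tilde O(\tau^{-\min\{1,\alpha\}})$ is the \emph{expected} distance on the corner (this is what the integral computation in Step~3 of that proof gives), not a uniform per-sample bound. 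Also, the $\tilde O(\tau_n^{-2\alpha})$ you quote for $\bar C$ is the \emph{unconditional} probability; conditionally on $\|X\|_1>\tau_n$ it is $\tilde O(\tau_n^{-\alpha})$. The paper's fix is cleaner than redoing the per-sample tail analysis: since Proposition~\ref{thm:bias} already gives $\mathbb W_1(\mu_A^{\tau_n},K_A)\le\tilde O(\tau_n^{-\min\{1,\alpha\}})=\tilde O(n^{-1/\max\{1,\alpha\}})$, and any coupling with the atomic measure $K_A$ must transport each point to some atom, Markov's inequality yields
\[
P\Bigl(\tfrac{X}{\|X\|_1}\notin \bar A^e\,\Big|\,\|X\|_1>\tau_n\Bigr)\le \frac{\mathbb W_1(\mu_A^{\tau_n},K_A)}{\Theta(n^{-s})}=\tilde O\bigl(n^{-(1/\max\{1,\alpha\}-s)}\bigr),
\]
where $\bar A^e$ is the $\Theta(n^{-s})$-enlargement of the atoms. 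The union bound over $O(\log n)$ samples then goes through exactly as you intend.

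\textbf{Step 3.} Your worry about coincident or nearly coincident atoms is unnecessary under the model class: $JA\ge\sigma$ together with $\|A_{\cdot i}\|_1\in[l,u]$ forces $|\det(\bar A)|$ to be uniformly bounded away from zero, hence the pairwise $\ell_1$-distances between the $\bar A_{\cdot i}$ are uniformly bounded below. The paper uses exactly this observation to dispense with the Hall-marriage argument; once the samples are confined to $m$ well-separated balls of radius $O(n^{-s})$ and each ball is nonempty (the paper checks this separately), $m$-means trivially returns one center per ball.
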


\begin{proof}[Proof of Lemma~\ref{lem:oraclebarA}]
Since we observe $\bar{A}^{-1}X=\diag(\theta_1,\ldots,\theta_m)Z$, we conduct the estimation procedure separately for $(\bar{A}^{-1}X)_i = \theta_i Z_i$ over the $m$ coordinates. To illustrate, suppose we consider $i=1$. Notice that if $g(z)$ belongs to $\tilde M_k$ (cf. equation~\eqref{eq:lawZ}), then 
\[
-\xi k^{-s} \alpha^m\prod_{i=1}^m (1+z_i)^{-(\alpha+1)}\leq g(z) - \alpha^m\prod_{i=1}^m (1+z_i)^{-(\alpha+1)}\leq \xi k^{-s} \alpha^m\prod_{i=1}^m (1+z_i)^{-(\alpha+1)}.
\]
Integrating out $(z_2,\ldots,z_m)$ to obtain the marginal density of $z_1$, denoted by $g_1(z_1)$, we have
\[
-\xi k^{-s} \alpha(1+z_1)^{-(\alpha+1)}\leq g_1(z_1) - \alpha(1+z_1)^{-(\alpha+1)}\leq \xi k^{-s} \alpha (1+z_1)^{-(\alpha+1)}.
\]
Moreover, $g_1(z_1)$ satisfies 
\[
 g_1(z_1) \propto (1+z_1)^{-(\alpha+1)} \textrm{ if } z_1> \zeta k^{\frac{1-2s}{\alpha}},
\]
since $z_1> \zeta k^{\frac{1-2s}{\alpha}}$ would imply that $\|z\|_1> \zeta k^{\frac{1-2s}{\alpha}}$. Hence, $g_1(z_1)$ belongs to $\tilde M^1_k$ (cf. equation~\eqref{eq:lawZ1dim}). Finally, conditioning on the event (which holds with probability at least $1-\eps$)
\[
n^{s} \left|\frac{\hat{r} \int_{z\in\mathbb{R}^m_+:\|z\|_1>\zeta\cdot n^{\frac{1-2s}{\alpha}}}\alpha^m\prod_{i=1}^m (1+z_i)^{-(\alpha+1)}\mathrm{d}z}{\int_{z\in\mathbb{R}^m_+:\|z\|_1>\zeta n^{\frac{1-2s}{\alpha}}} g(z)\mathrm{d}z}-1\right|\leq C_\eps.
\]
Then if $k<n$, we have
\[n^{s} \left|\frac{\hat{r} (1+\zeta n^{\frac{1-2s}{\alpha}})^{-\alpha}}{\int_{\zeta n^{\frac{1-2s}{\alpha}}}^\infty g_1(z_1)dz_1}-1\right|\leq C_\eps.
\]
Otherwise $k\geq n$, we have $\hat{r} = 1+O(n^{-s})$. Hence, by Theorem~\ref{thm:oracleonedim}, we can estimate $\theta_1$ with an error of $O(n^{-s})$ with probability at least $1-\eps$.
\end{proof}

\begin{proof}[Proof of Lemma~\ref{lem:pluginbarA}]
    Note that since $A$ is a square matrix, $JA = |\textrm{det}(A)|$, and by the definition of the model class~\eqref{eq:minimaxmodel}, we have $|\textrm{det}(A)|\geq \sigma$. Since $\|A_{\cdot i}\|_1\in[l,u]$, we have that  $|\textrm{det}(\bar A)|$ is bounded from zero. 
    Conditioning on the events $n^{s} \|\tilde A - \bar A\|_1\leq \underline C_\eps$, we can write
    \[
    \tilde A^{-1}X = (\bar{A} + O(n^{-s}))^{-1}X,\quad\textrm{as }n\to\infty.
    \]
    Hence 
    \[
    ( \tilde A^{-1}X)_i = (\bar{A}^{-1}X)_i (1+O(n^{-s})) \quad\textrm{as }n\to\infty,~\forall i.
    \]
    Consider the component $i=1$, there exists $c_1,c_2$, such that for all $n$ large enough
   \[
   \frac{\sum_{j=1}^n1_{\{(\bar{A}^{-1} X)_1^{(j)}>\tau_n(1+c_1n^{-s})\}}}{n}\leq \frac{\sum_{j=1}^n1_{\{( \tilde A^{-1}X)_1^{(j)}>\tau_n\}}}{n}\leq\frac{\sum_{j=1}^n1_{\{(\bar{A}^{-1} X)_1^{(j)}>\tau_n(1-c_2n^{-s})\}}}{n},
   \]
   and
   \[
   P\left((\bar{A}^{-1} X)_1>\tau_n(1+c_1n^{-s})\right)\leq P((\tilde{A}^{-1}X)_1>\tau_n)\leq P\left((\bar{A}^{-1} X)_1>\tau_n(1-c_2n^{-s})\right).
   \]
   From equation~\eqref{eq:uniformconvergence} and the proof of Lemma~\ref{lem:oraclebarA}, we obtain
   \begin{small}
   \[
   \liminf_{n\to\infty}
\inf_{\mathcal{L}(X)\in M}P\left(n^{1-s} \left| \frac{\sum_{j=1}^n1_{\{(\bar{A}^{-1} X)_1^{(j)}>\tau_n(1+c_1n^{-s})\}}}{n} - P\left((\bar{A}^{-1} X)_1>\tau_n(1+c_1n^{-s})\right) \right|\leq C_\eps\right) \geq 1-\eps,
   \]
   \end{small}
  where note that $\tau_n(1+c_1n^{-s}) = \Theta(\tau_n)$. Similarly, we have
  \begin{small}
  \[
   \liminf_{n\to\infty}
\inf_{\mathcal{L}(X)\in M}P\left(n^{1-s} \left| \frac{\sum_{j=1}^n1_{\{(\bar{A}^{-1} X)_1^{(j)}>\tau_n(1-c_2n^{-s})\}}}{n} - P\left((\bar{A}^{-1} X)_1>\tau_n(1-c_2n^{-s})\right) \right|\leq t_\eps)\right) \geq 1-\eps.
   \]
   \end{small}
   Then, note that if $\mathcal{L}(X)\in M_k$ and  $k<n$, then
   \begin{align}
        & P\left((\bar{A}^{-1} X)_1>\tau_n(1-c_2n^{-s})\right) -P\left((\bar{A}^{-1} X)_1>\tau_n(1+c_1n^{-s})\right)\notag\\
        & \propto \left( \left(1+ \frac{\tau_n(1-c_2n^{-s})}{\theta_1}\right)^{-\alpha} - \left(1+ \frac{\tau_n(1+c_1n^{-s})}{\theta_1}\right)^{-\alpha}\right)\notag\\
        & = \left(\frac{\left(1+ \frac{\tau_n(1-c_2n^{-s})}{\theta_1}\right)^{-\alpha}}{\left(1+ \frac{\tau_n(1+c_1n^{-s})}{\theta_1}\right)^{-\alpha}}  - 1\right)\left(1+ \frac{\tau_n(1+c_1n^{-s})}{\theta_1}\right)^{-\alpha}\notag\\
         & = O(n^{-s}) O(n^{2s-1}) = O(n^{s-1})\label{eq:genksmall}.
   \end{align}
   Otherwise $k\geq n$, then
   \begin{align}
        & P\left((\bar{A}^{-1} X)_1>\tau_n(1-c_2n^{-s})\right) -P\left((\bar{A}^{-1} X)_1>\tau_n(1+c_1n^{-s})\right)\notag\\
        & = \left(1+ \frac{\tau_n(1-c_2n^{-s})}{\theta_1}\right)^{-\alpha} (1+O(n^{-s})) - \left(1+ \frac{\tau_n(1+c_1n^{-s})}{\theta_1}\right)^{-\alpha}(1+O(n^{-s}))\notag\\
        & = O(n^{-s}) O(n^{2s-1}) = O(n^{s-1})\label{eq:genklarge}.
   \end{align}
   Combining these, we have for some $C'_\eps>0$, it holds that 
   \[
    \liminf_{n\to\infty}
\inf_{\mathcal{L}(X)\in M}P\left(n^{1-s} \left|\frac{\sum_{j=1}^n1_{\{( \tilde A^{-1}X)_1^{(j)}>\tau_n\}}}{n} - P((\tilde{A}^{-1}X)_1>\tau_n) \right|\leq C'_\eps\right) \geq 1-\eps.
    \]
    Finally, notice that from equations~\eqref{eq:genksmall} and~\eqref{eq:genklarge},  we can also conclude that
   \[
   P((\tilde{A}^{-1}X)_1>\tau_n) = (1+O(n^{-s}))P((\bar{A}^{-1}X)_1>\tau_n).
   \]
   Hence, the proofs of Theorem~\ref{thm:oracleonedim} and Lemma~\ref{lem:oraclebarA} apply, and we can estimate $\theta_1$ with an error of $O(n^{-s})$ with probability at least $1-\eps$.
\end{proof}

\begin{proof}[Proof of Lemma~\ref{lem:estbarA}]
Since $\frac{1}{2+\max\{1,\alpha\}}< s< \frac{1}{2}$, we know that by Proposition~\ref{thm:bias}, \[\sup_{\mathcal{L}(X)\in M}\mathbb{W}_1(\mu_A^\tau,K_A)\leq \tilde O(\tau^{-\min\{1,\alpha\}}).\] 
Since $|\textrm{det}(\bar{A})|$ is the volume of the parallelepiped defined by columns of $\bar{A}$, and $|\textrm{det}(\bar{A})|$ is (uniformly over model class~\eqref{eq:minimaxmodel}) lower bounded from zero, we know that the distance between $\bar{A}_{\cdot i}$ and $\bar{A}_{\cdot j}$ is (uniformly over model class~\eqref{eq:minimaxmodel}) lower bounded for arbitrary $i\neq j$. Consider a $\Theta(n^{-s})$ enlargement of the set of columns of $\bar{A}$, denoted by $\bar{A}^e$, i.e.,
\[
\bar{A}^e = \{\omega \in \aleph_+^1: \exists i, \textrm{ such that } \|\omega-\bar{A}_{\cdot i}\|_1\leq \Theta(n^{-s})\}.
\]
Set $\tau^{-\alpha} = \frac{\log n}{n}$, we find that if $\alpha\leq1$, then
\[\sup_{\mathcal{L}(X)\in M}\mathbb{W}_1(\mu_A^\tau,K_A) \leq \tilde O(n^{-1}),\]
and if $\alpha>1$, then
\[\sup_{\mathcal{L}(X)\in M}\mathbb{W}_1(\mu_A^\tau,K_A) \leq \tilde O(n^{-\frac{1}{\alpha}}).\]
Hence by definition (note that $s<\frac{1}{\alpha}$ by the assumption in the statement of Lemma~\ref{lem:estbarA})
\[
\sup_{\mathcal{L}(X)\in M}P\left(\frac{X}{\|X\|_1}\notin \bar{A}^e\Big|\|X\|_1>\tau\right)\leq  \tilde O\left(n^{-(\frac{1}{\max\{1,\alpha\}}-s)}\right).
\]
Thus, we have
\[
\sup_{\mathcal{L}(X)\in M} P_{n_\tau}\left(\exists 1\leq j\leq n_\tau: X^{(j,n_\tau)}\notin\bar{A}^e\right)\leq  1- \left(1-\tilde O\left(n^{-(\frac{1}{\max\{1,\alpha\}}-s)}\right)\right)^{n_\tau}.
\]
From similar analysis in the proof of Proposition~\ref{thm:variance}, we have that for any $\eps\in(0,1)$, there exists $ t_\eps$, such that
\[
\liminf_{n\to\infty} \inf_{\mathcal{L}(X)\in M} P\left(\frac{n_\tau}{n\left(\frac{\log n}{n}\right)}\leq t_\eps\right) \geq 1-\eps.
\]
Conditioning on the event $n_\tau = O(\log n) $, we have
\[
\sup_{\mathcal{L}(X)\in M} P_{n_\tau}\left(\exists 1\leq j\leq n_\tau: X^{(j,n_\tau)}\notin\bar{A}^e\right)\leq  1- \left(1-\tilde O\left(n^{-(\frac{1}{\max\{1,\alpha\}}-s)}\right)\right)^{\log n},
\]
where the right-hand side above goes to zero. Moreover, following the same analysis, we can also conclude that
\[
\inf_{\mathcal{L}(X)\in M} P_{n_\tau}\left(\exists 1\leq j\leq n_\tau: X^{(j,n_\tau)}\in\bar{A}_{\cdot i}^e\right)\to1,
\]
where $\bar{A}_{\cdot i}^e$ denotes the $\Theta(n^{-s})$ enlargement around the singleton $\{\bar{A}_{\cdot i}\}$ (hence $\bar{A}^e = \cup_{i=1}^m \bar{A}_{\cdot i}^e$). 
Hence, with asymptotic probability at least $1-2\eps$, we can apply k-means to obtain the $m$ cluster centers with an error of $O(n^{-s})$~\cite[Algorithm~14.1]{hastie2009elements}.
\end{proof}

\begin{proof}[Proof of Theorem~\ref{thm:bigthmpotentialestimator}]
Combining Lemmas~\ref{lem:oraclebarA},~\ref{lem:pluginbarA} and \ref{lem:estbarA}, and using the definition of $\mathbb{W}_p^p$, the result follows trivially.
\end{proof}

\subsection{Additional Simulation Results in Section~\ref{sec:numerics}}\label{app:additionalsim}
Here, we provide additional details in demonstrating that the tuning of $\kappa$ is robust and the choice of $\kappa=1$ is also reasonably optimal. We fix $\tilde\kappa=0.5$, and select $\kappa$ from $\{0.01,0.1,1,2\}$. We found that except for $\kappa=0.01$, the other choices of $\kappa$ display comparable convergence performance, see Figure~\ref{fig:kappatunevis} and Table~\ref{tab:ratekappa}, and the diagnostic plots strike a reasonable bias-variance tradeoff, see Figure~\ref{fig:kappatune}.

\begin{figure}[h]
    \centering
    \subfigure[$\kappa=0.01$]{
	 \includegraphics[width=0.45\columnwidth]{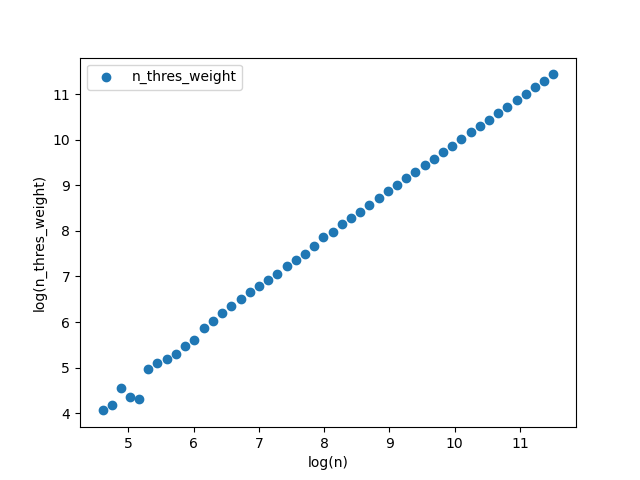}} \hspace{1mm}
	\subfigure[$\kappa=0.1$]{
	\includegraphics[width=0.45\columnwidth]{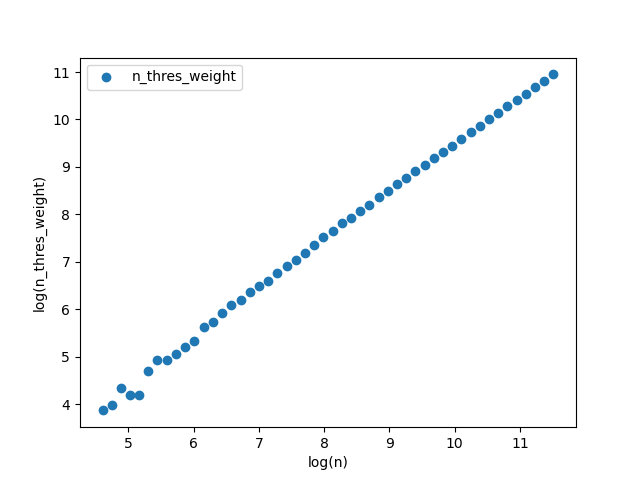}}
	    \hspace{1mm}
	\subfigure[$\kappa=1$]{
		\includegraphics[width=0.45\columnwidth]{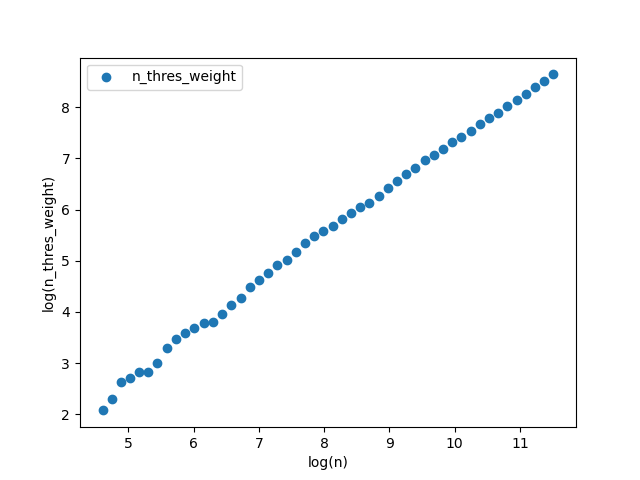}}
	\subfigure[$\kappa=2$]{
		\includegraphics[width=0.45\columnwidth]{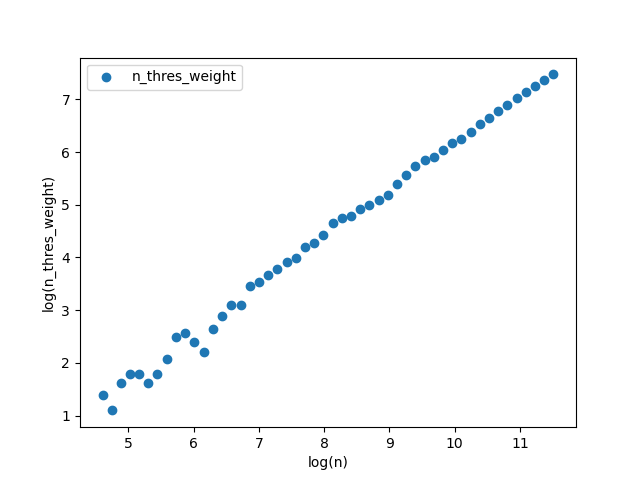}} 
	\caption{Logarithm of number of samples exceeding the threshold $\tau$ versus $\log(n)$, where $n$ is the total sample size, for each choice of $\kappa$.}
    \label{fig:kappatune}
\end{figure}

\begin{figure}[h]
    \centering
    \subfigure[$\kappa=0.01$]{
	 \includegraphics[width=0.45\columnwidth]{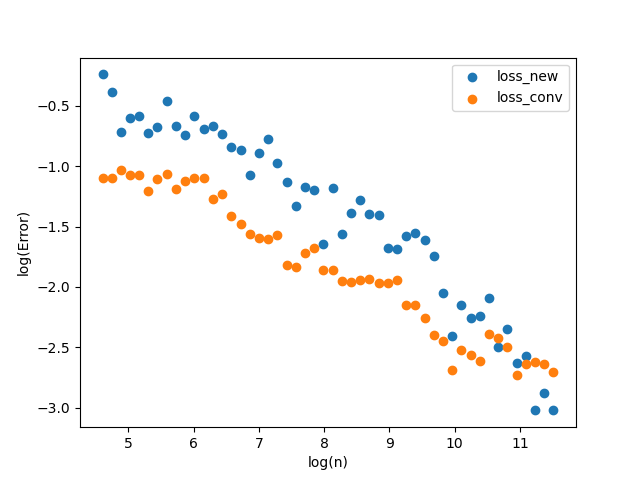}} \hspace{1mm}
	\subfigure[$\kappa=0.1$]{
	\includegraphics[width=0.45\columnwidth]{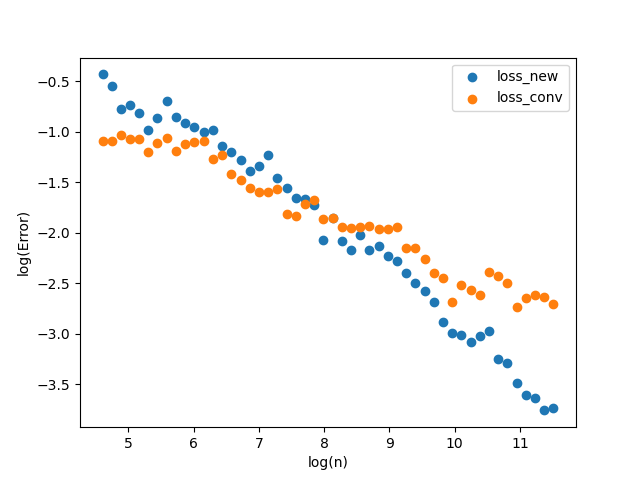}}
	    \hspace{1mm}
	\subfigure[$\kappa=1$]{
		\includegraphics[width=0.45\columnwidth]{vis_0.5_1.png}}
	\subfigure[$\kappa=2$]{
		\includegraphics[width=0.45\columnwidth]{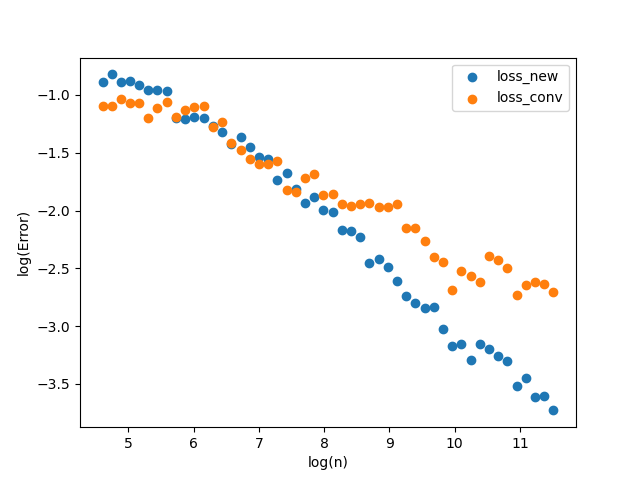}} 
	\caption{Convergence of methods versus $\log(n)$, where $n$ is the total sample size, for each choice of $\kappa$.}
    \label{fig:kappatunevis}
\end{figure}

\begin{table}[h!]
\centering
\begin{tabular}{||c|c|c|c|c||} 
\hline
$\kappa$ & 0.01 & 0.1 & 1 & 2\\
\hline
rate & -0.352 & -0.470 & -0.456 & -0.446\\
\hline
\end{tabular}
\caption{Convergence rate for difference choices of hyperparameter $\kappa$.}
\label{tab:ratekappa}
\end{table}

\newpage
\bibliographystyle{siam}
\bibliography{bibliography}
\end{document}